\documentclass[12pt]{article}
\usepackage{graphicx}
\usepackage{algorithm}
\usepackage{algpseudocode}[compatible]

\usepackage{xcolor}    
\usepackage{comment}
\usepackage{indentfirst,csquotes}
\usepackage{ulem}
\usepackage{amssymb,amsthm,amsmath}
\usepackage{xcolor,paralist,hyperref,titlesec,fancyhdr,etoolbox}
\usepackage{mathtools}

\newtheorem{theorem}{Theorem}[]

\newtheorem{proposition}{Proposition}

\begin{document}

\title{A Generative Model-Free Form Deformation Approach for the Generation of Mesh Motions with Applications to PDE
}
\author{
Guglielmo Padula \\
SISSA \\
\texttt{gpadula@sissa.it}
\and
Artem Sinitsa \\
SISSA \\
\texttt{asinitsa@sissa.it}
\and
Gianluigi Rozza \\
SISSA \\
\texttt{grozza@sissa.it}
}
\maketitle


\begin{abstract}
We introduce a topology-agnostic framework for matching deformations of three-dimensional shapes with non-isomorphic mesh graphs by modelling the deformation as the flow of an Ordinary Differential Equation (ODE). The velocity field is parameterised by a time-dependent Free Form Deformation (FFD), expressed through displacements of a coarse control lattice, yielding a smooth and low-dimensional representation that decouples the deformation model from the discretisation of the source and target surfaces. Under mild regularity assumptions, we prove that the induced ODE map is a universal approximator (in the sup norm) for mappings between genus-0 surfaces, providing a theoretical expressivity guarantee. To further compress the representation and enable probabilistic inference, we couple the ODE--FFD model with a flow-based generative approach in the TarFlow framework, learning a compact latent parametrisation over time series of FFD maps. The resulting method supports efficient sampling and optimisation of plausible deformation trajectories while preserving mesh quality, and it enables scalable reduced-order modelling. Experiments on deforming-body flow benchmarks demonstrate improved accuracy and computational efficiency of reduced-order models constructed from the learned latent dynamics.
\end{abstract}

\section{Introduction}
\label{intro}

Matching and deforming three-dimensional shapes is central to many engineering and scientific problems, including shape optimisation \cite{demo_hull_2021}, registration of temporal scans \cite{tenderini_deformable_2025}, and reduced order modelling for fluid–structure interaction \cite{nkana_ngan_hybrid_2025}. Classical geometric morphing techniques such as Free Form Deformation (FFD) provide an intuitive control-based parametrisation of deformation fields \cite{sederberg_free-form_1986}, while modern model reduction and generative methods offer powerful tools for parameter compression \cite{padula_generative_2024,padula_generative_2025}. In this work, we bridge these two directions by modelling a time-dependent FFD map as the drift of an ODE to represent deformation trajectories and by introducing a data-driven generative model to reduce the parametrisation of such trajectories.

\subsection{Motivation and contributions}
Representations for deforming surfaces must satisfy two competing desiderata: expressivity (ability to represent complex deformations) and parsimony (low-dimensional control for tractable inversion, sampling, and reduced order models construction) \cite{padula_generative_2024}. FFDs have long been used for geometric control because of their smoothness and intuitive control lattice \cite{sederberg_free-form_1986,salmoiraghi_free-form_2018}. However, naively selecting a high-dimensional control grid leads to very large parameter spaces. Our central idea is to let the control lattice displacements vary in time and to define the deformation trajectory implicitly as the flow of an ODE whose drift field is exactly the instantaneous FFD mapping. Concretely, if \(x(t)\) denotes a material point of the object subject to deformation, we model its evolution by
\[
\frac{d}{dt} x(t) \;=\; \mathcal{F}\big(x(t); \mathbf{c}(t)\big),
\]
where \(\mathcal{F}\) represents the velocity field (drift term) induced by the FFD, parametrised by the time-dependent control vector \(\mathbf{c}(t)\). This formulation treats deformation as a continuous time process and naturally accommodates time integration, variational formulations, and coupling with PDE solvers in Eulerian or Lagrangian descriptions \cite{richter_fluid-structure_2017,glowinski_exact_2008}, furthermore allowing the use of a coarser lattice.

Our main contributions are:
\begin{enumerate}
  \item A novel ODE-with-FFD-drift framework, called FFD-ODE, to represent time-dependent shape deformations that supports base and target meshes of genus-0 meshes with different vertex topology, while preserving smoothness and invertibility properties in practice. We develop a theoretical result showing that, under mild regularity of control basis functions and sufficient control resolution, the FFD-ODE can approximate arbitrary smooth deformation flows on genus-$0$ surfaces, extending classical universal approximation ideas to the manifold/mesh setting \cite{cybenko_approximation_1989,hornik_approximation_1991}.
  \item A generative model based on the novel architecture TARFlow \cite{gu_starflow_2025,zhai_normalizing_2025} that maps a low-dimensional latent time series to sequences of control displacements \(\mathbf{c}(t)\) for the FFD-ODE system, enabling sampling, interpolation, and statistical validation of quantities of interest (QoIs) across deformation ensembles.
  
  \end{enumerate}
We perform numerical validation on two physics-based benchmark problems. The first one involves the the study of an incompressible-flow configuration involving flow past a deforming Stanford Bunny, solved via a stabilized Arbitrary Lagrangian–Eulerian (ALE) variational formulation with Streamline-Upwind Petrov-Galerkin (SUPG) stabilization implemented in FEniCSx \cite{brooks_streamline_1982,baratta_dolfinx_2023,scroggs_basix_2022,alnaes_unified_2014,turk_zippered_1994}. The second one is focused on a compressible hypersonic flow configuration involving flow past a deforming sphere, simulated using the OpenFOAM software package \cite{weller_tensorial_1998,anderson_hypersonic_2019}. We show that Reduced Order Models (ROMs) constructed using the proposed generative, reduced parametrisation exhibit enhanced offline/online computational efficiency and improved accuracy in QoIs compared with ROMs derived from naive, high-dimensional parametrisations \cite{quarteroni_reduced_2016,willcox_balanced_2002}.

In the context of inverse problems, such as aerodynamic shape optimisation or flow control,
standard deformation techniques often lead to invalid meshes or require expensive re-meshing \cite{mm_mesh_2016}. Our ODE-based framework naturally ensures topology preservation, while the generative model acts as a robust prior, effectively regularising the ill-posed inverse problem by restricting the search space to a learned manifold of plausible motions.

\subsection{Related work}
Our work connects classical geometric morphing techniques with modern generative and reduced-order modelling frameworks for PDE-driven simulations.
Generative models \cite{kingma_auto-encoding_2013,rezende_variational_2015,goodfellow_generative_2020,ho_denoising_2020} have become powerful surrogates for reduced-order modelling and data-driven flow synthesis, with diffusion and flow-based models showing strong expressivity for geometric and physical field generation. Wang et al.~\cite{potamias_shapefusion_2025} proposed a localised 3D diffusion model based on PCA and the octree-based diffusion method of~\cite{xiong_octfusion_2025}, which enables diffusion sharing across octree levels. TARFlow~\cite{zhai_normalizing_2025} revisited normalising flows, demonstrating their renewed potential as high-capacity generative density estimators. Finally, the recent surveys by Wang et al.~\cite{wang_recent_2024,wang_diffusion_2025} provide comprehensive overviews of the intersection between machine learning, computational fluid dynamics, and generative 3D modelling by emphasising the rise of hybrid, physics-aware generative frameworks. These advances collectively motivate our formulation, which combines the interpretability of FFD-based geometric control with the flexibility of generative latent representations for time-dependent mesh deformations and reduced order modelling.

Regarding geometrical parametrisation, FFD remains a cornerstone in geometric control and shape morphing, providing smooth and parameter-efficient deformation representations. Recent developments have extended the method toward higher fidelity and domain-specific applications. For instance, Chapelier et al.~\cite{chapelier_free-form_2021} introduced FFD-DIC, a Spline Regularised Free Form deformation framework for arbitrary finite element measurements, while Fukuda et al.~\cite{fukuda_efficient_2024} leveraged FFD-based deformation fields to accelerate musculoskeletal annotation in medical imaging. In the context of hydrodynamics, Liu et al.~\cite{liu_novel_2025} proposed a Body Fitted Free Form Deformation (b-FFD) method enabling smooth and large hull modifications by replacing the rectangular control lattice with a convex hexahedral control lattice. Generative models have also been applied for learning deformation of meshes of different topological connectivity and with constraints \cite{padula_generative_2024,padula_generative_2025}.
Amiri-Hezaveh et al.~\cite{amiri-hezaveh_physics-informed_2025} developed a physics-informed neural ODE approach for deformable medical image registration that jointly enforces data and physical constraints. These efforts highlight the growing role of ODE-based neural deformation frameworks for both interpretability and generative capacity.

Beyond explicit geometric control, recent studies have coupled deformation modelling with neural differential equations and generative priors. Tenderini et al.~\cite{tenderini_deformable_2025} presented an Auto-Decoder Neural ODE formulation for vascular anatomy registration and generative modelling. We adopt an approach similar to theirs, in the sense that we also leverage the Chamfer Distance for learning our map, and we also employ an ODE-based model. The main differences are the differences are that we use a FFD map instead of a neural network, by first learning a different map for each step and then constructing a generative model on the Free Form Deformation control displacements. The advantage of this approach is that it is more flexible: if one is interested in finding trajectories only to a few target shapes, there is no need to train a neural network (and thus to collect a lot of non-useful target shapes) as one can simply learn a different map for every shape in a short time.\\
Finally, Kabalan et al. \cite{kabalan_elasticity-based_2025} developed a technique to compute a shape-to-shape bijection employing an elasticity method, which, however, requires meshes with the same topology, while our methodology also works with point clouds with different number of points and without a topology. Furthermore, our method is guaranteed to produce a bijective morphism, as it corresponds to the solution of an ODE, while their method enforces the bijection constraint weakly with a penalisation term.

\subsection{Paper organization}
Section \ref{sec:method} introduces the ODE–FFD model, its mathematical properties, and the universality statement. In section \ref{sec:genmodel} we describe the generative model architectures and training losses used for deformation time series. Section \ref{sec:numerics} then details the two CFD benchmarks, solver setups (FEniCSx and OpenFOAM), and the ROM experiments. In Section \ref{sec:results}, we present numerical validations on QoI distributions and results about the performance of the ROMs. Finally, Section \ref{sec:conclusion} concludes and highlights future directions.

\section{The FFD-ODE model}
\label{sec:method}
In this section, we present the Free Form Deformation Ordinary Differential Equation (FFD-ODE) model, which is employed to learn a morphism between two genus-0 meshes that may differ in graph topology. This framework can be used either to obtain the morphism itself or, alternatively, to transfer the connectivity structure (topology) from the input mesh to the target mesh.

We begin by introducing the univariate Bernstein polynomials of degree \(m\) on the interval \([0,1]\), namely
\[
    b_{m, i}(x) = \binom{m}{i} x^i (1 - x)^{m - i}, \quad i = 0, \dots, m,
\]
and then define the associated trivariate Bernstein polynomials on \([0,1]^3\) by
\[
    B_{(i,j,k),(m,n,o)}(x,y,z) = b_{m,i}(x)\, b_{n,j}(y)\, b_{o,k}(z).
\]

A FFD map is a function from \([0,1]^3\) to \(\mathbb{R}^3\) of the form
\[
    \boldsymbol{F}_{\boldsymbol{\delta}}(\boldsymbol{x})
    = \boldsymbol{\delta}\, \boldsymbol{B}(\boldsymbol{x})
    := \sum_{i=0}^{m} \sum_{j=0}^{n} \sum_{k=0}^{o}
       \delta_{ijk}\, B_{(i,j,k),(m,n,o)}(x,y,z),
\]
where
   $ \delta_{ijk} \in \mathbb{R}^3$
are the FFD control parameters. Consequently, an FFD model possesses
\(3 \cdot (m+1) \cdot (n+1) \cdot (o+1)\) degrees of freedom.
In the remainder of the paper, we will treat the indices \(i,j,k\) as a single composite index, ordered in row-major fashion. Under this convention,
\(\boldsymbol{\delta} \in \mathbb{R}^{3 \times (m+1) \times (n+1) \times (o+1)}\) and
\(\boldsymbol{B}(\boldsymbol{x}) \in \mathbb{R}^{(m+1) \times (n+1) \times (o+1)}\) for all
\(\boldsymbol{x} \in [0,1]^3\).
For notational convenience, we set
\[
    \mathcal{C} := \mathbb{R}^{3 \times (m+1) \times (n+1) \times (o+1)}.
\]
If we choose the control points as
\[
    c_{ijk} = \left(\frac{i}{m}, \frac{j}{n}, \frac{k}{o}\right),
\]
then one can show that
\[
    \boldsymbol{F}_{\boldsymbol{c}}(\boldsymbol{x}) = \boldsymbol{x}
    \quad \forall\, \boldsymbol{x} \in [0,1]^3,
\]
i.e., this particular choice of control lattice yields the identity mapping on the unit cube.
 
By employing Bernstein polynomials, FFD attains a universal approximation property, as formalised by the following theorem \cite{foupouagnigni_multivariate_2020}.
\begin{theorem}
\label{thm:1}
For every $\epsilon>0$ and for every $\mathbf{f}\in C^{0}([0,1]^{3},\mathbb{R}^{3})$, there exist $m,n,o \in \mathbb{N}$ such that, setting
\[
    \delta_{ijk}=\mathbf{f}\left(\frac{i}{m}, \frac{j}{n}, \frac{k}{o}\right),
\]
the following inequality holds:
\[
\sup_{[0,1]^{3}}\left\lVert\mathbf{f}-\boldsymbol{F}_{\boldsymbol{\delta}}\right\rVert \le \epsilon. 
\]
\end{theorem}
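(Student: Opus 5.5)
This is the multivariate Bernstein approximation theorem, and I would prove it directly, componentwise, with the classical probabilistic (Chebyshev) argument. Because the sup of the Euclidean norm is controlled by the sum of the component errors, it suffices to treat a scalar $f\in C^0([0,1]^3,\mathbb{R})$ and target error $\epsilon/3$ per component. Throughout I take $m=n=o=N$ and write $B_N f(\boldsymbol{x}) := \sum_{i,j,k} f(i/N,j/N,k/N)\, B_{(i,j,k),(N,N,N)}(\boldsymbol{x})$. With this notation, $\boldsymbol{F}_{\boldsymbol{\delta}}$ for the prescribed $\delta_{ijk}$ is exactly $B_N\mathbf{f}$ applied componentwise.

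The first step is to record three identities of the trivariate basis. The $B_{(i,j,k)}$ are nonnegative on $[0,1]^3$ and sum to $1$, which follows from the binomial theorem in each variable. They have first moment $\sum (i/N) B_{(i,j,k)}(\boldsymbol{x}) = x$, which is the identity-lattice fact already noted in the paper, and similarly in $y$ and $z$. They have second moment $\sum (i/N - x)^2 B_{(i,j,k)}(\boldsymbol{x}) = x(1-x)/N \le 1/(4N)$, and likewise for $y,z$. Summing the second-moment bounds gives
\[
\sum_{i,j,k} \Bigl\|\bigl(\tfrac iN,\tfrac jN,\tfrac kN\bigr)-\boldsymbol{x}\Bigr\|^2 B_{(i,j,k)}(\boldsymbol{x}) \le \frac{3}{4N}.
\]
Probabilistically, this says that $B_{(i,j,k)}(\boldsymbol{x})$ is the law of a product of three independent scaled binomials with mean $\boldsymbol{x}$.

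The main step is the uniform estimate. By compactness of $[0,1]^3$, $f$ is bounded by some $M$ and is uniformly continuous, so pick $\eta>0$ with $|f(\boldsymbol{u})-f(\boldsymbol{x})|<\epsilon'$ whenever $\|\boldsymbol{u}-\boldsymbol{x}\|<\eta$. Using the partition of unity,
\[
|B_Nf(\boldsymbol{x})-f(\boldsymbol{x})| \le \sum_{i,j,k} |f(\boldsymbol{u}_{ijk})-f(\boldsymbol{x})|\,B_{(i,j,k)}(\boldsymbol{x}).
\]
I split this sum into near nodes, with $\|\boldsymbol{u}_{ijk}-\boldsymbol{x}\|<\eta$, and far nodes. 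The near nodes contribute at most $\epsilon'$. The far nodes contribute at most $2M\sum_{\text{far}}B_{(i,j,k)}(\boldsymbol{x}) \le 2M\cdot\frac{3}{4N\eta^2}$, by Chebyshev applied to the second-moment bound. Neither bound depends on $\boldsymbol{x}$, so choosing $N$ large gives a sup error below $2\epsilon'$, and setting $\epsilon'=\epsilon/6$ finishes the proof.

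The only real care point is keeping the tail estimate uniform in $\boldsymbol{x}$. That is exactly what the bound $x(1-x)\le 1/4$ and uniform continuity provide, so I expect no serious obstacle. Alternatively, one can simply cite \cite{foupouagnigni_multivariate_2020}.
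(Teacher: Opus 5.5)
Your proof is correct, but it is not what the paper does. The paper gives no argument for this statement and simply defers to \cite{foupouagnigni_multivariate_2020} for uniform convergence of multivariate Bernstein polynomials. You instead give a self-contained version of the classical Bernstein--Chebyshev argument: you reduce to scalar components via $\|\boldsymbol{v}\|_2\le\|\boldsymbol{v}\|_1$, use nonnegativity and the partition of unity to write the error as an average of $|f(\boldsymbol{u}_{ijk})-f(\boldsymbol{x})|$, and bound the mass of the far nodes by Chebyshev with the tensorised variance bound $\sum_{i,j,k}\|\boldsymbol{u}_{ijk}-\boldsymbol{x}\|^2B_{(i,j,k)}(\boldsymbol{x})\le 3/(4N)$.

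Compared with the citation, your route gives two extra pieces of information. First, the isotropic choice $m=n=o=N$ already suffices. Second, you obtain an explicit bound $\epsilon'+3M/(2N\eta^2)$ that depends on $\mathbf{f}$ only through a sup bound and a modulus of continuity. The second point is useful later in the paper. In the appendix proof of Theorem 2, Theorem~\ref{thm:1} is invoked to produce a single lattice with $\sup_{t,\boldsymbol{x}}\|\boldsymbol{F}_{\boldsymbol{\alpha}(t)}(\boldsymbol{x})-\boldsymbol{g}(\boldsymbol{x},t)\|\le\nu$. The statement as formulated, for one fixed $\mathbf{f}$, does not literally provide this. Your estimate gives it at once from uniform continuity of $\boldsymbol{g}$ on $[0,1]^3\times[0,1]$.

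One bookkeeping point should be made explicit: the three components must share one $N$, since $\delta_{ijk}\in\mathbb{R}^3$ lives on a single lattice. This is harmless, because your bound decreases in $N$, so taking the maximum of the three componentwise thresholds works.
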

Consider two closed, genus-0, boundary-less surfaces $S_{1}$ and $S_{2}$, both embedded in $\mathbb{R}^{3}$ and, in particular, contained in the unit cube, i.e., $S_{1}\subset[0,1]^{3}$ and $S_{2}\subset[0,1]^{3}$. Our objective is to construct a smooth deformation that maps $S_{1}$ onto $S_{2}$. The existence of such a deformation is guaranteed by classical results in differential topology \cite{chazal_condition_2005}. In particular we are interested in approximating the deformation from \(S_{1}\) to \(S_{2}\) by means of a dynamical system of the form
\[
\dot{\boldsymbol{x}}(t) = \boldsymbol{F}_{\boldsymbol{\alpha}(t)}(\boldsymbol{x}(t)).
\]
We denote by \(\boldsymbol{G}_{\boldsymbol{\alpha}}(\boldsymbol{x})\) the value at time \(t = 1\) of the solution to the above ODE with initial condition \(\boldsymbol{x}\).

We next recall the definition of the Chamfer Distance, which we employ as an error metric for measuring the quality of the approximation:
\[
\begin{gathered}
CD(S_{1},S_{2}) = \frac{1}{2}\frac{\displaystyle\int_{S_{1}}\left(\min _{y \in S_{2}}\|x-y\|\right)^2 \, dS(x)}{\displaystyle\int_{S_{1}} 1 \, dS(x)} \\
+ \frac{1}{2}\frac{\displaystyle\int_{S_{2}}\left(\min _{y \in S_{1}}\|x-y\|\right)^2 \, dS(x)}{\displaystyle\int_{S_{2}} 1 \, dS(x)}.
\end{gathered}
\]

The following theorem establishes that \(\boldsymbol{G}_{\boldsymbol{\alpha}}(\boldsymbol{x})\) satisfies a universal approximation property with respect to the Chamfer Distance.

\begin{theorem}
Let \(S_{1}\) and \(S_{2}\) be closed, smooth, genus-0 surfaces without boundary, both contained in \((0,1)^{3}\). Then, for every \(\epsilon > 0\) there exist \(m,n,o \in \mathbb{N}\) and functions \(\alpha_{ijk} : [0,1] \rightarrow \mathbb{R}^{3}\), with \(i = 0,\ldots,m\), \(j = 0,\ldots,n\), \(k = 0,\ldots,o\), such that 
\[
CD(\boldsymbol{G}_{\boldsymbol{\alpha}}(S_{1}), S_{2}) \le \epsilon,
\]
and
\[
\alpha_{ijk}(t) = 0 \quad \forall t \in [0,1] \text{  
}\mathrm{ whenever } \text{  
}i(m-i)j(n-j)k(o-k) = 0.
\]
\end{theorem}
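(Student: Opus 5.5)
Our plan is to build an exact smooth isotopy from $S_1$ to $S_2$ that is compactly supported inside $(0,1)^3$, to approximate its time-dependent velocity field by FFD maps using Theorem~\ref{thm:1}, and to control the resulting change in the flow map with a Gronwall estimate. The Chamfer Distance is bounded by the squared sup-distance between corresponding points, so uniform closeness of flow maps on $S_1$ will give the claim.

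\textbf{Step 1 (exact flow).} Since $S_1,S_2$ are smooth embedded spheres in $(0,1)^3$, we use the cited differential-topology results (via Alexander's theorem / smooth isotopy of embedded 2-spheres in $\mathbb{R}^3$, followed by isotopy extension) to obtain a smooth ambient isotopy $\Phi_t$, $t\in[0,1]$, with $\Phi_0=\mathrm{id}$ and $\Phi_1(S_1)=S_2$. We then argue that it can be chosen compactly supported in a compact set $K\subset(0,1)^3$, by conjugating with a radial shrinking or by cutting off the generating field. Writing $\Phi_t$ as the flow of a time-dependent smooth field $\mathbf{v}(t,\mathbf{x})$, we may multiply $\mathbf{v}$ by a bump function equal to $1$ on a neighbourhood of $\bigcup_t\Phi_t(S_1)$ and supported in $K$. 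This makes $\mathbf{v}$ smooth, Lipschitz in $\mathbf{x}$ with constant $L$, and zero near $\partial[0,1]^3$.

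\textbf{Step 2 (FFD approximation with vanishing boundary coefficients).} Applying Theorem~\ref{thm:1} to $\mathbf{v}(t,\cdot)$ with $\alpha_{ijk}(t)=\mathbf{v}(t,i/m,j/n,k/o)$ gives $\sup_{\mathbf{x}}\|\mathbf{v}(t,\mathbf{x})-\boldsymbol F_{\boldsymbol\alpha(t)}(\mathbf{x})\|\le\eta$. Because $\mathbf{v}(t,\cdot)$ vanishes on the faces of the cube, the coefficients with $i(m-i)j(n-j)k(o-k)=0$ are exactly zero, which is the required boundary condition. The bound must hold uniformly in $t$. For this we use the quantitative Bernstein estimate, whose error is governed by the modulus of continuity of $\mathbf{v}(t,\cdot)$; that modulus is uniform in $t$ because $\mathbf{v}$ is smooth on the compact set $[0,1]\times[0,1]^3$, so a single triple $(m,n,o)$ works for all $t$. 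The resulting $\alpha_{ijk}$ are continuous in $t$.

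\textbf{Step 3 (well-posedness and stability).} $\boldsymbol F_{\boldsymbol\alpha(t)}$ is polynomial in $\mathbf{x}$ and vanishes on $\partial[0,1]^3$: on each face one Bernstein index is extreme, so the only surviving terms have zero coefficients. Hence trajectories starting in the cube stay in it, and the flow $\boldsymbol G_{\boldsymbol\alpha}$ is well defined. For $\mathbf{x}\in S_1$, let $\mathbf{y}(t)=\Phi_t(\mathbf{x})$ and let $\mathbf{z}(t)$ be the FFD trajectory. Gronwall, using the Lipschitz constant $L$ of the exact field $\mathbf{v}$, gives $\|\mathbf{y}(1)-\mathbf{z}(1)\|\le\eta e^{L}$; this only requires $\mathbf{v}$ to be Lipschitz, not the approximant. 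Then every point of $\boldsymbol G_{\boldsymbol\alpha}(S_1)$ lies within $\eta e^L$ of $S_2$. Conversely, every point $\Phi_1(\mathbf{x})$ of $S_2$ lies within $\eta e^L$ of $\boldsymbol G_{\boldsymbol\alpha}(\mathbf{x})$. Both integrands in $CD$ are therefore bounded by $(\eta e^L)^2$, so $CD\le(\eta e^L)^2$; choosing $\eta=\sqrt{\epsilon}\,e^{-L}$ finishes the argument. If $CD$ is read literally, the surface measure on $\boldsymbol G_{\boldsymbol\alpha}(S_1)$ requires that image to be a rectifiable surface. This holds because $\boldsymbol G_{\boldsymbol\alpha}$ is a smooth diffeomorphism, and the normalised averages are still bounded by the pointwise sup.

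The main obstacle is Step 1: justifying rigorously that an isotopy between $S_1$ and $S_2$ exists with generating field supported inside the open cube. Step 2 then needs a uniform-in-$t$ Bernstein bound, which requires care because Theorem~\ref{thm:1} as stated is for a single continuous function. I would cite the standard smooth isotopy results for embedded 2-spheres in $\mathbb{R}^3$, conjugated by a diffeomorphism of $\mathbb{R}^3$ onto $(0,1)^3$ that is the identity near the surfaces. For the uniformity in $t$ I would use the explicit modulus-of-continuity bound for multivariate Bernstein operators.
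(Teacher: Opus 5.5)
Your proposal is correct and follows essentially the same route as the paper's proof: a smooth diffeotopy supported in $(0,1)^3$ via isotopy extension, then FFD/Bernstein approximation of its velocity field with lattice-point coefficients (which vanish at boundary indices because the field is supported inside the cube), then a Gr\"onwall estimate using only the Lipschitz constant of the exact field, and finally the two-sided Chamfer bound. Your version is in fact more careful than the paper's on points it glosses over: uniformity in $t$ of the Bernstein approximation, invariance of the cube under the FFD flow, keeping the isotopy track inside $(0,1)^3$, and the fact that $CD$ involves squared distances (hence your choice $\eta=\sqrt{\epsilon}\,e^{-L}$).
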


Note that $\boldsymbol{G}_{\boldsymbol{\alpha}}$ depends highly nonlinearly on $\boldsymbol{\alpha}$. Consequently, it is advantageous to incorporate regularisation when estimating the optimal parameter vector $\boldsymbol{\alpha}$, in order not to get weights of very high norm.  

We now describe the procedure used to determine $\boldsymbol{\alpha}$. To obtain an initial guess $\boldsymbol{\alpha}_{0}(t)$ for the ensuing minimisation problem, we first compute a temporary variable $\boldsymbol{\beta}_{0}$ by minimising $\tilde{CD}(\boldsymbol{F}_{\boldsymbol{\beta}_{0}}(\tilde{S}_{1}), \tilde{S}_{2})$, where $\tilde{S}_{1}$ and $\tilde{S}_{2}$ are discretised approximations of $S_{1}$ and $S_{2}$, respectively, and
\[
\tilde{CD}(\tilde{S}_{1},\tilde{S}_{2})=\frac{1}{N}\sum_{i=1}^N \min _{1 \leq j \leq M}\left\|\boldsymbol{a}^{(i)}-\boldsymbol{b}^{(j)}\right\|^2+\frac{1}{M}\sum_{j=1}^M \min _{1 \leq i \leq N}\left\|\boldsymbol{a}^{(i)}-\boldsymbol{b}^{(j)}\right\|^2,
\]
with $\tilde{S}_{1}=\{\boldsymbol{a}_i \mid i=1,\ldots,N\}$ and $\tilde{S}_{2}=\{\boldsymbol{b}_j \mid j=1,\ldots,M\}$. The functional $\tilde{CD}$ thus represents a discrete approximation of the Chamfer distance.  

The resulting optimisation problem is solved using the L‑BFGS algorithm, with the initial iterate chosen as the control‑point configuration $\boldsymbol{c}$ such that $\boldsymbol{F}_{\boldsymbol{c}}(\boldsymbol{x})=\boldsymbol{x}$, i.e. the identity mapping.

The L‑BFGS method requires the gradient of $\tilde{CD}$ with respect to $\tilde{S}_{1}$, which is defined pointwise as
\[
 \frac{\partial\tilde{CD}}{\partial \boldsymbol{a}^{(i)}}(\tilde{S}_{1},\tilde{S}_{2})
 =\frac{2}{N}\left(\boldsymbol{a}^{(i)}-\boldsymbol{b}^{j_i}\right)
 +\frac{2}{M} \sum_{\left\{j: i_j=i\right\}}\left(\boldsymbol{a}^{(i)}-\boldsymbol{b}^{(j)}\right),
\]
where
\[
   j_i := \arg \min _j \left\|\boldsymbol{a}^{(i)}-\boldsymbol{b}^{(j)}\right\|^2
\]
and
\[
   i_{j} := \arg \min _i \left\|\boldsymbol{b}^{(j)}-\boldsymbol{a}^{(i)}\right\|^2.
\]

The $\operatorname{argmin}$ operators are evaluated by constructing an appropriate k‑d tree for efficient nearest‑neighbour queries.

Once $\boldsymbol{\beta}_{0}$ has been computed, we initialise
\[
    \boldsymbol{\alpha}^{(0)}(t) = \boldsymbol{\beta}_{0} - \boldsymbol{c}
    \quad \forall t \in [0,1].
\]

We then define
\[
\Phi(\tilde{S}) = \tilde{CD}(\tilde{S},\tilde{S}_{2})
\]
as the discretised Chamfer distance functional with fixed target set $\tilde{S}_{2}$.
The optimisation problem under consideration is
\[
\min\limits_{\boldsymbol{\alpha} \in C([0,1])} \, \Phi\bigl(\boldsymbol{G}_{\boldsymbol{\alpha}}(\tilde{S}_{1})\bigr)
\;+\; \rho \int_{0}^{1} \bigl(\boldsymbol{\alpha}(t) - \boldsymbol{\beta}_{0} + \boldsymbol{c}\bigr)^{2} \, dt,
\]
where $\rho > 0$ denotes a regularisation parameter.  

According to Pontryagin's Maximum Principle \cite{geering_optimal_2007}, a first-order necessary condition for optimality of this control problem is given by the following system:
\[
\begin{cases}
\dot{\boldsymbol{\lambda}}^{(i)}(t)
= -\dfrac{\partial H}{\partial \boldsymbol{y}^{(i)}}(t)
= -\bigl(\boldsymbol{\lambda}^{(i)}(t)\bigr)^{\top}\boldsymbol{\alpha}(t)\bigl(\nabla \boldsymbol{B}(\boldsymbol{y}^{(i)}(t))\bigr)^{\top},\\[0.6em]
\dot{\boldsymbol{y}}^{(i)}(t)
= \dfrac{\partial H}{\partial \boldsymbol{\lambda}^{(i)}}(t)
= \boldsymbol{\alpha}(t)\,\boldsymbol{B}\bigl(\boldsymbol{y}^{(i)}(t)\bigr),\\[0.4em]
\boldsymbol{y}^{(i)}(0) = \boldsymbol{a}^{(i)},\\[0.4em]
\boldsymbol{\lambda}^{(i)}(1)
= -\dfrac{\partial \Phi}{\partial \boldsymbol{a}^{(i)}}\bigl(\boldsymbol{G}_{\boldsymbol{\alpha}}(\tilde{S}_{1})\bigr),\\[0.6em]
\dfrac{\partial H}{\partial \boldsymbol{\alpha}}(t)
= 2\rho\bigl(\boldsymbol{\alpha}(t) - \boldsymbol{\beta}_{0} + \boldsymbol{c}\bigr)
+ \sum_{i=1}^N \boldsymbol{\lambda}^{(i)}(t)\,\boldsymbol{B}\bigl(\boldsymbol{y}^{(i)}(t)\bigr)^{\top}
= 0,
\end{cases}
\]
where $H$ denotes the Hamiltonian, given by
\[
H(Y, \boldsymbol{\alpha}, \Lambda)
= \rho \,\|\boldsymbol{\beta}_{0} - \boldsymbol{\alpha}\|^{2}
+ \sum_{i=1}^N \bigl(\boldsymbol{\lambda}^{(i)}\bigr)^{\top} \boldsymbol{\alpha}\,\boldsymbol{B}\bigl(\boldsymbol{y}^{(i)}\bigr),
\]
and
\[
\Lambda = \bigl\{\boldsymbol{\lambda}^{(i)} \,\big|\, i = 1,\ldots, N \bigr\}
\]
denotes the collection of costate variables.  We solve this system of ODEs by employing a forward–backward sweep method. Starting from an initial guess $\boldsymbol{\alpha}=\boldsymbol{\alpha}^{(0)}$, we first integrate $\boldsymbol{y}^{(i)}$ forward in time, then integrate $\boldsymbol{\lambda}^{(i)}$ backward in time, and subsequently update $\boldsymbol{\alpha}$ according to
\[
\boldsymbol{\alpha}(t)
=
\boldsymbol{\beta}_{0}
+
\frac{1}{2\rho}
\sum_{i=1}^N
\boldsymbol{B}\!\left(\boldsymbol{y}^{(i)}(t)\right)^{\!T}
\boldsymbol{\lambda}^{(i)}(t).
\]
This iterative procedure is repeated until convergence.

This algorithm constructs the trajectories of an arbitrary lattice defined on an arbitrary genus-0 mesh, which is morphed, via a velocity field induced by FFD, into another arbitrary genus-0 mesh.

The main advantage of this approach, compared with, for instance, a simple linear interpolation in time of the FFD control points, is the guaranteed injectivity of the resulting mapping. This property is particularly relevant for the stable and accurate numerical solution of partial differential equations.

As an illustrative application, see Figs.~\ref{fig:1}, \ref{fig:bunny_traje}, and \ref{fig:cp_traje}, where a fine-resolution discretisation of the Stanford Bunny is transported onto a deformed configuration, represented on a coarser mesh.
\begin{figure}
    \centering
    \includegraphics[width=0.30\linewidth]{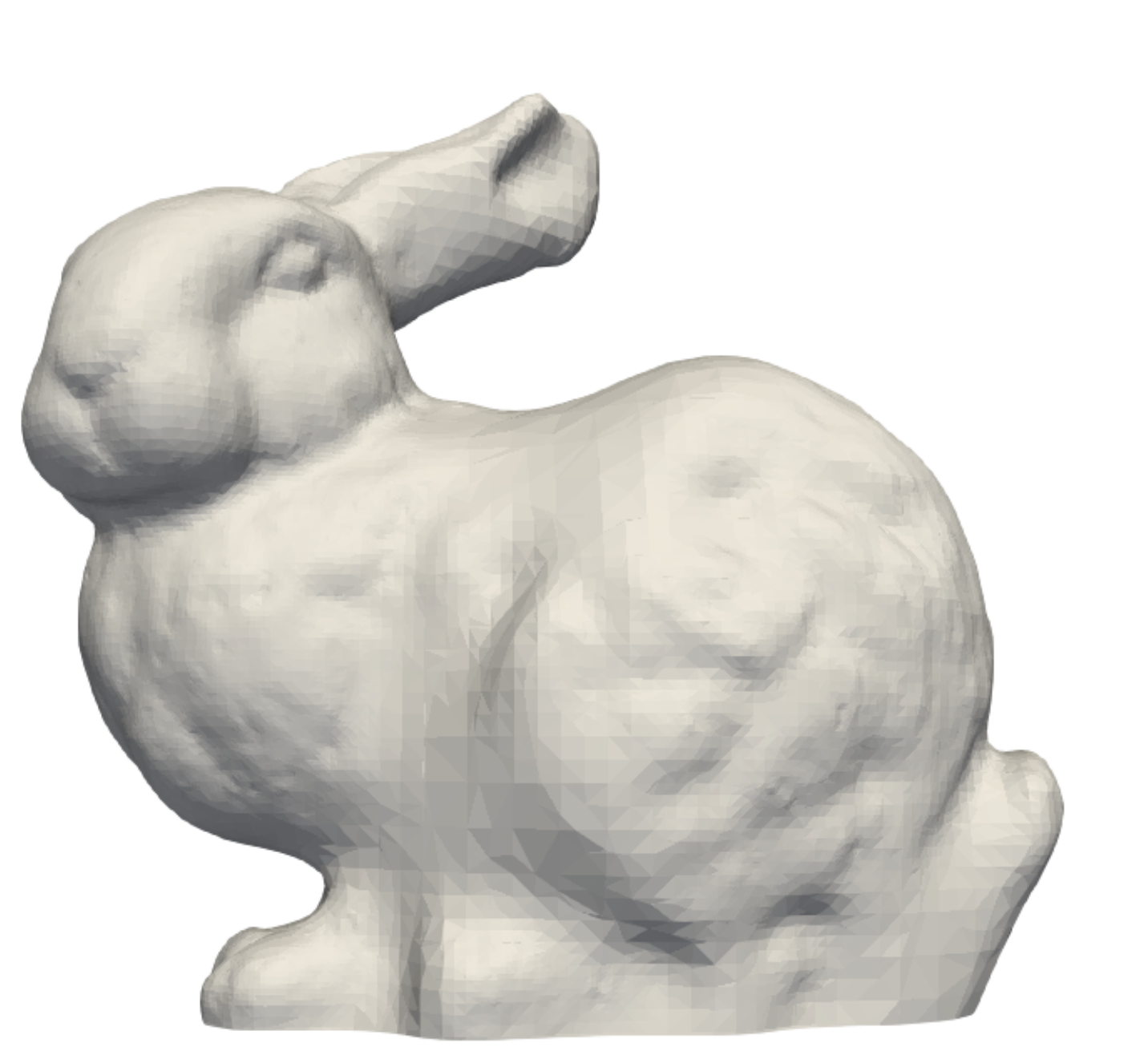}
    \includegraphics[width=0.30\linewidth]{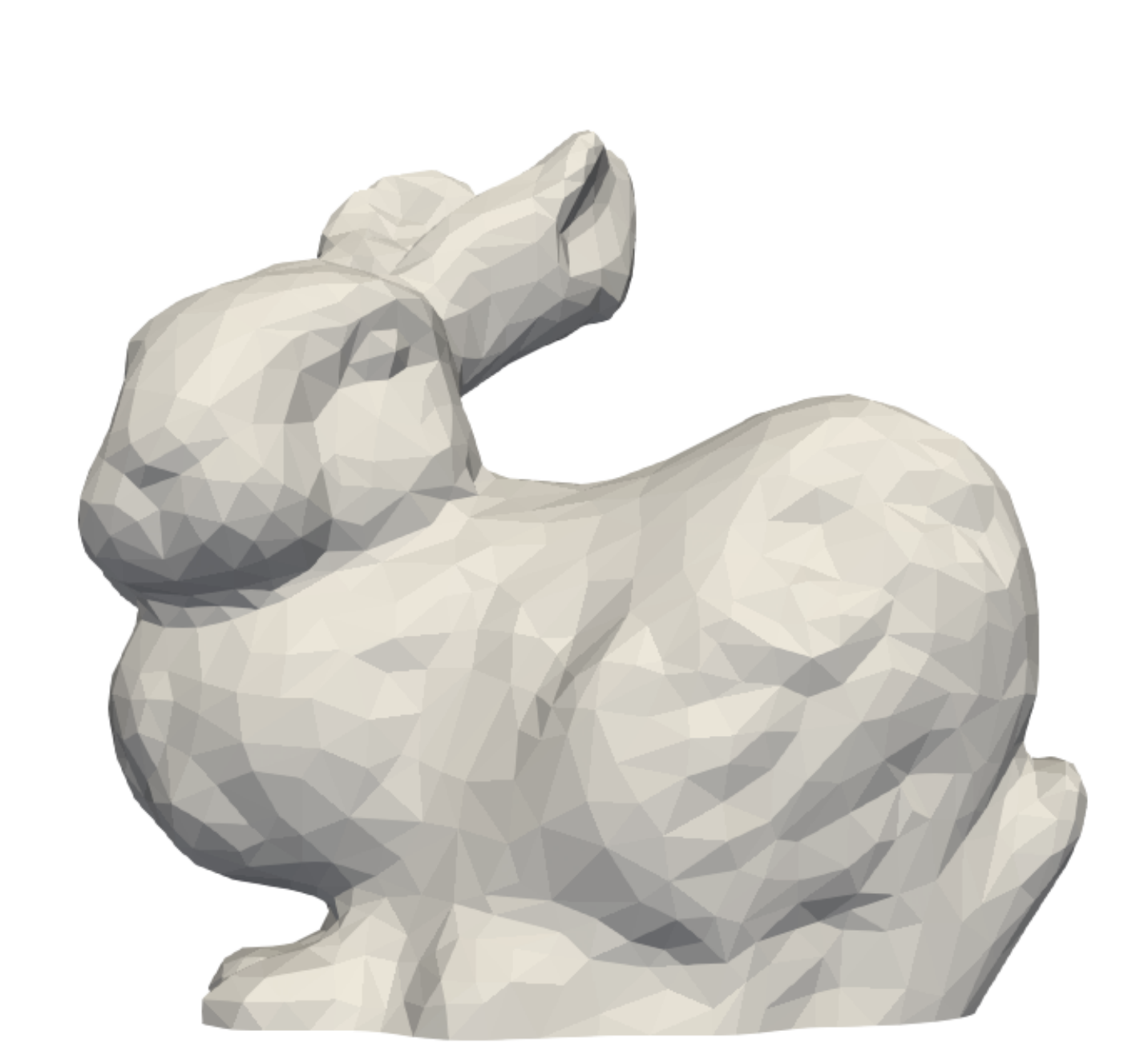}
    \includegraphics[width=0.30\linewidth]{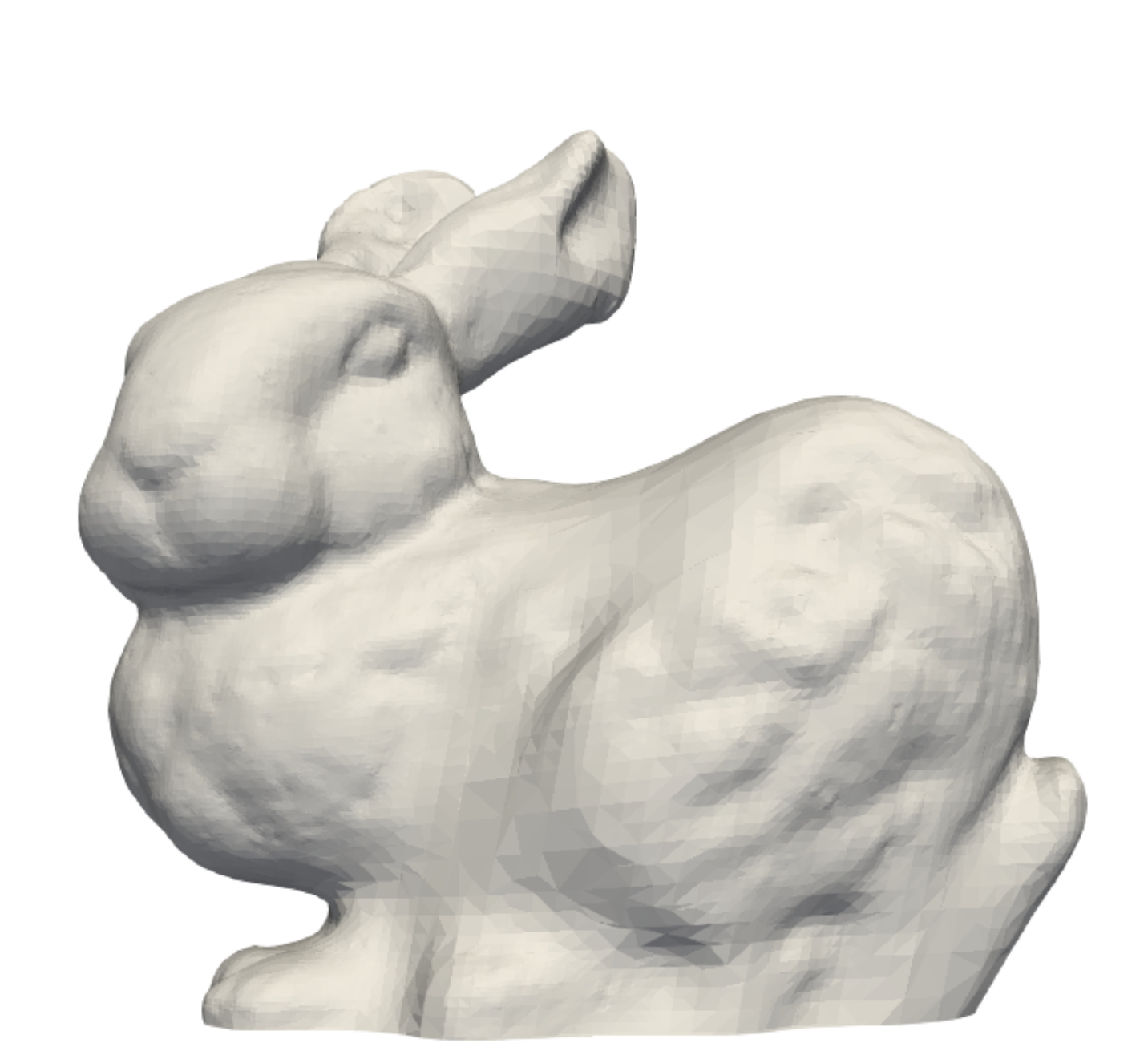}\\
    \includegraphics[width=0.30\linewidth]{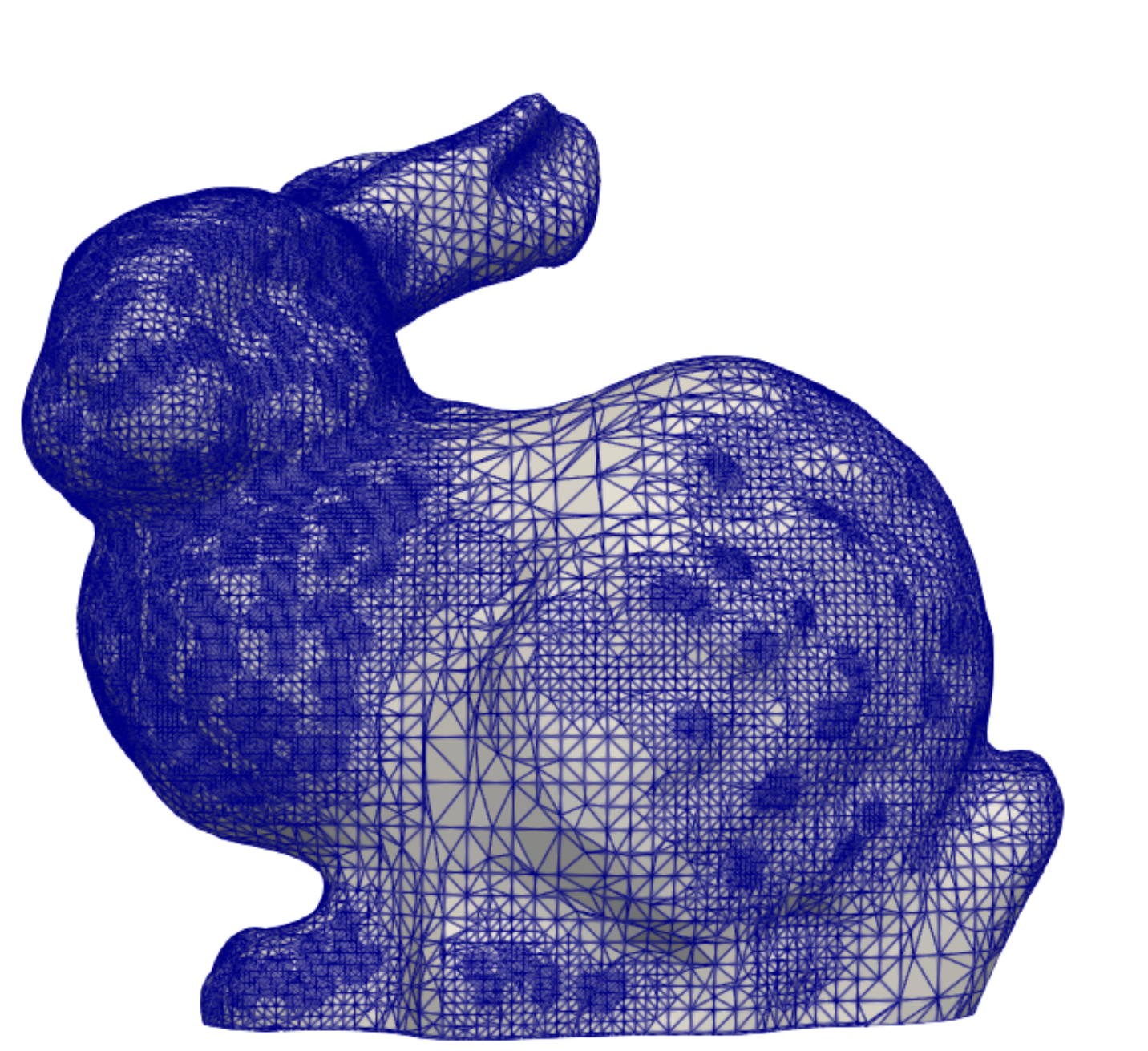}
    \includegraphics[width=0.30\linewidth]{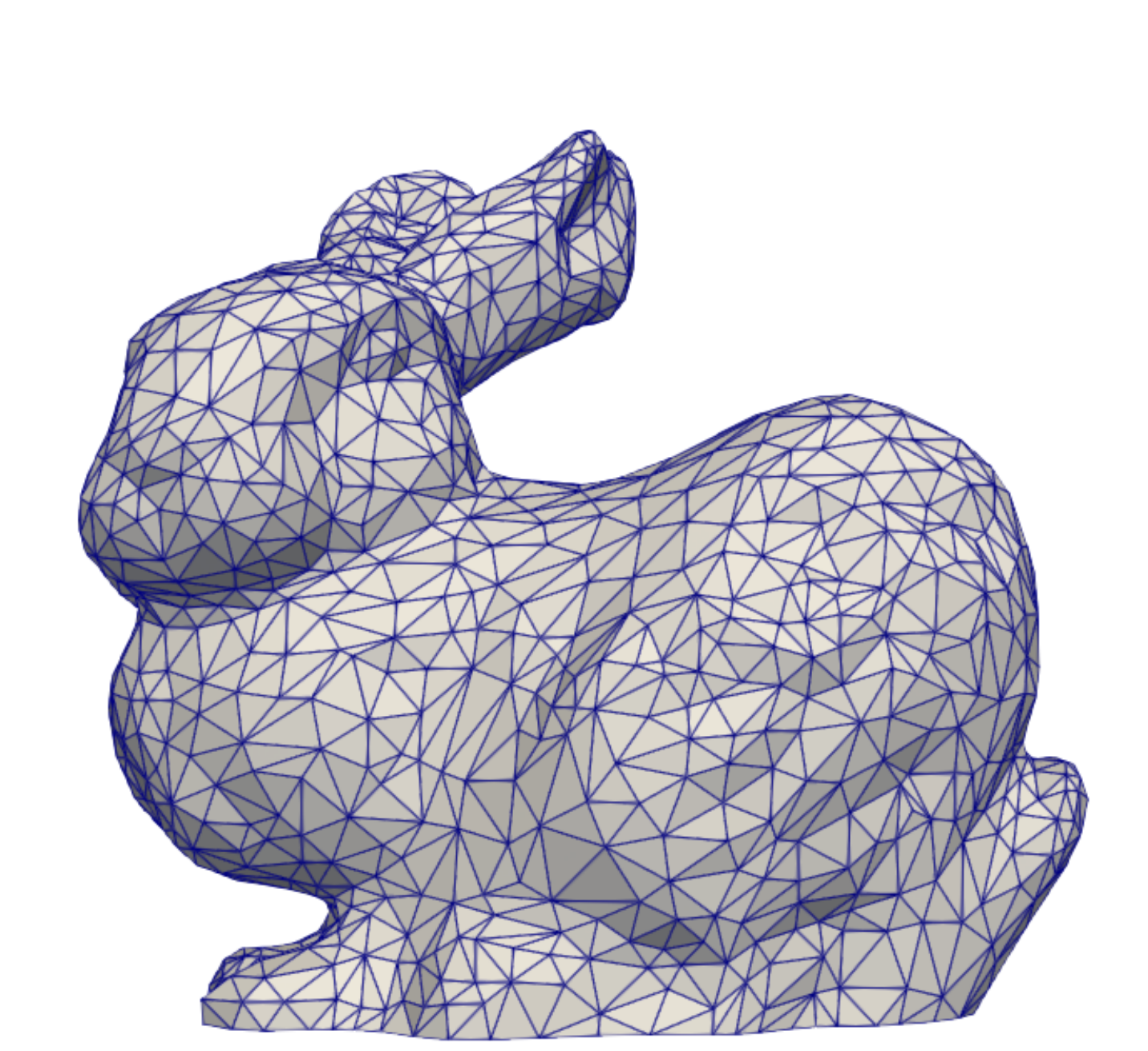}
    \includegraphics[width=0.30\linewidth]{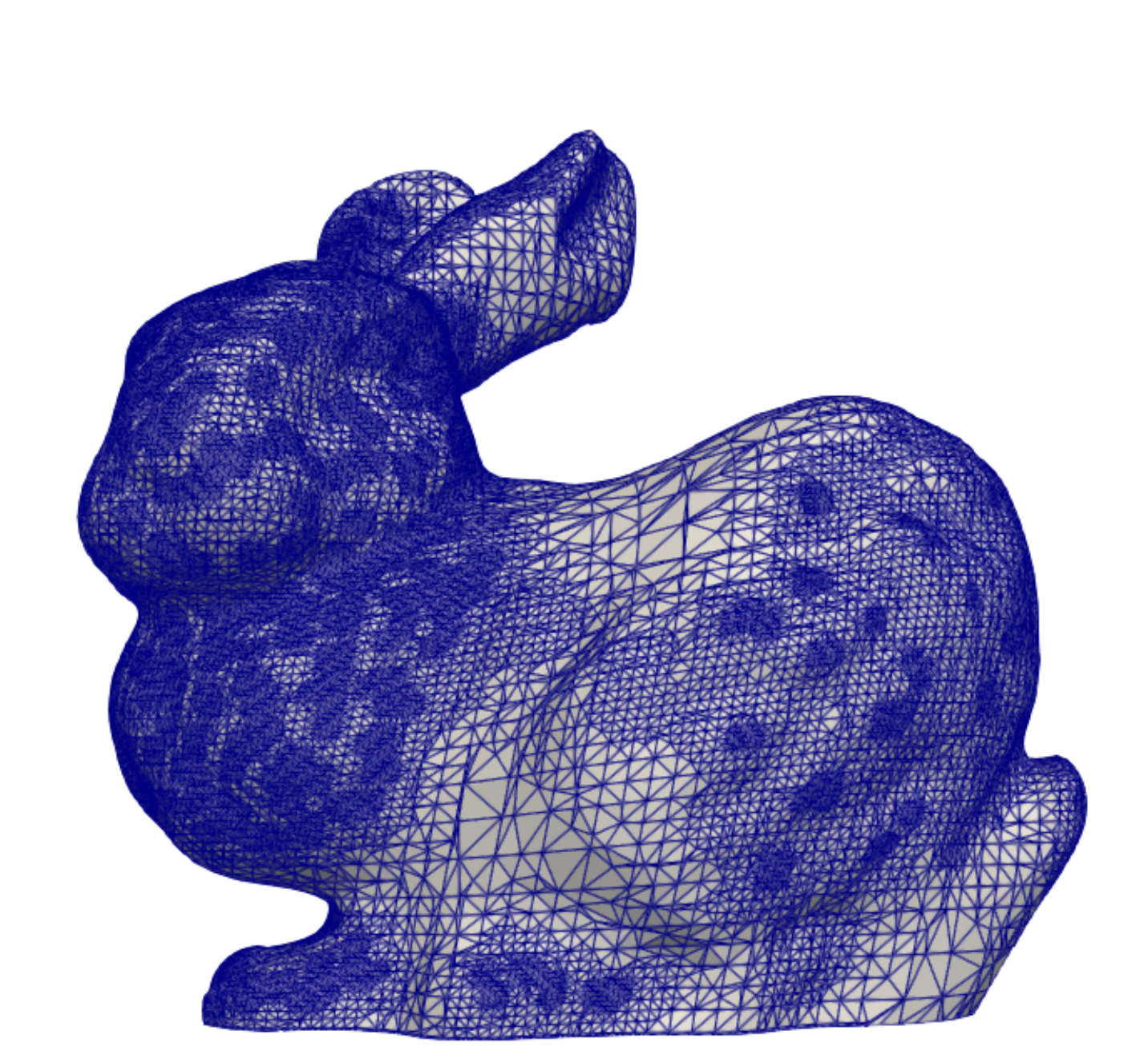}

    \caption{Left: source bunny model exhibiting the topology to be transferred. Center: target bunny model onto which the topology is to be mapped. Right: target bunny model after completion of the topology transfer.}
    \label{fig:1}
\end{figure}

\begin{figure}
    \centering
    \includegraphics[width=0.90\linewidth]{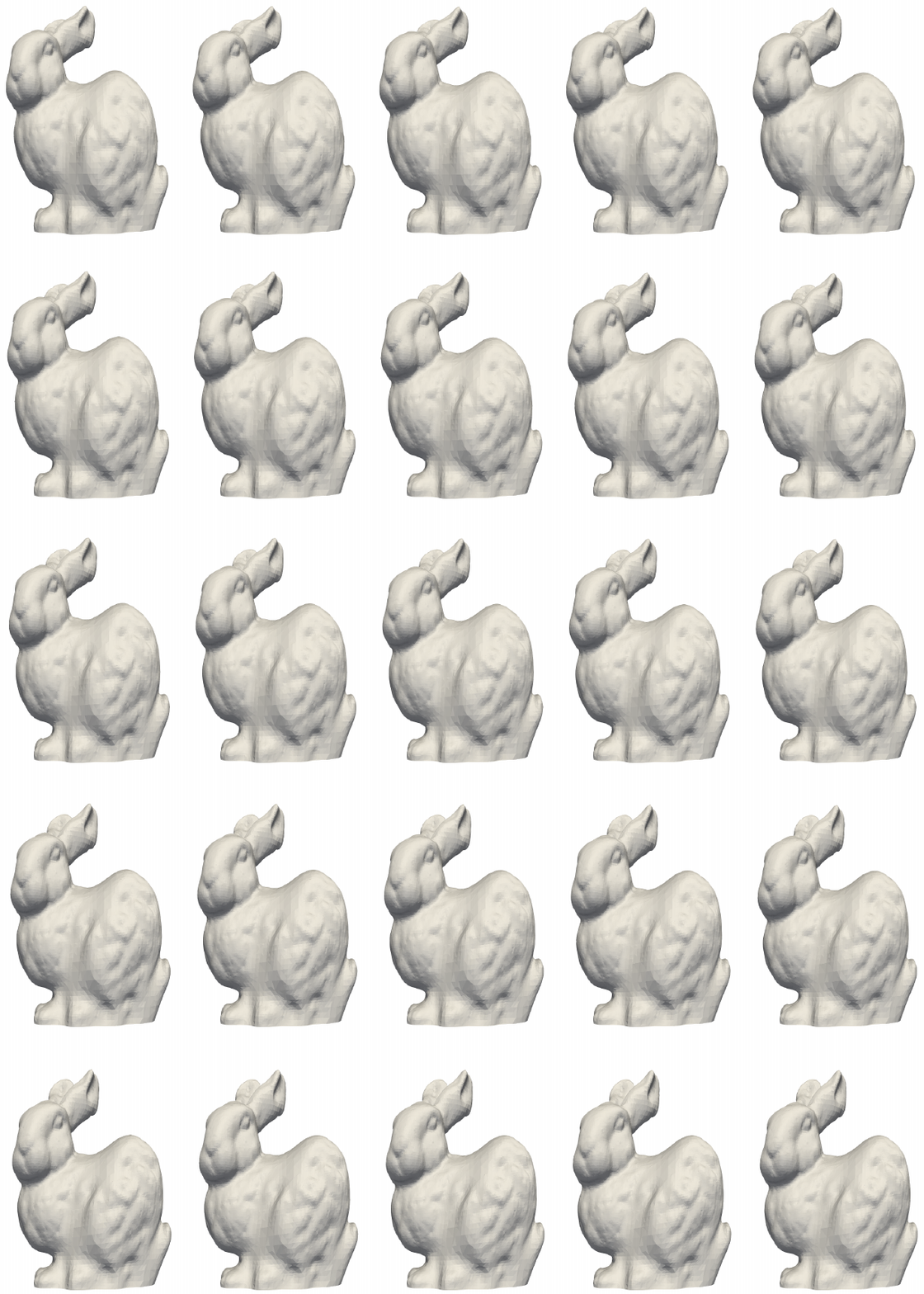}
    \caption{Deformation of the bunny in time.}
    \label{fig:bunny_traje}
\end{figure}

\begin{figure}
    \centering
    \includegraphics[width=1.00\linewidth]{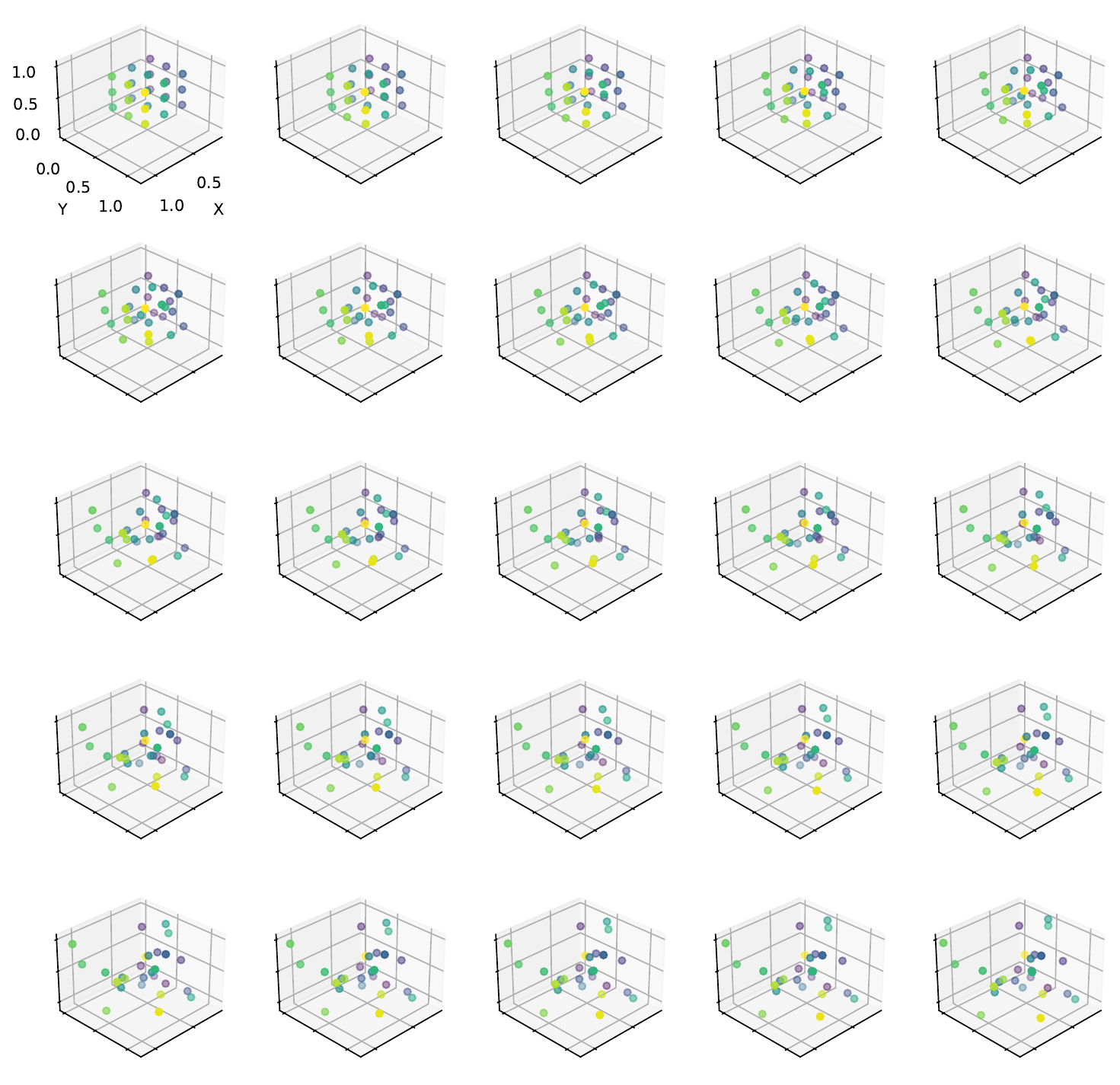}
    \caption{Position of the control points in time.}
    \label{fig:cp_traje}
\end{figure}

\section{Generative models}
\label{sec:genmodel}
In this section, we introduce the generative models methodology for sampling new trajectories of meshes and for performing reduction in parameter space.
As generative models, we use a combination of Proper Orthogonal Decomposition and STARFlow \cite{gu_starflow_2025} in the control points velocity field.\\

We assume that we have $N$ instantaneous velocity fields of control points

\[
\boldsymbol{\alpha}^{(i)}:[0,1]\rightarrow \mathcal{C} \quad i=1\ldots N.
\]
\subsection{Proper Orthogonal Decomposition}
The POD (Proper Orthogonal Decomposition) space is defined as a subspace $\mathbb{V}_{\mathrm{rb}} \subset \mathbb{V}$ of dimension $M < N$ that minimises the mean squared approximation error over a given set of snapshots $\{\boldsymbol{\alpha}^{(i)}\}_{i=1}^N \subset \mathbb{V}$. More precisely, $\mathbb{V}_{\mathrm{rb}}$ is such that
\[
\sqrt{\frac{1}{N} \sum_{i=1}^{N} \inf_{\boldsymbol{\alpha}^{\mathrm{rb}} \in \mathbb{V}_{\mathrm{rb}}} \left\| \boldsymbol{\alpha}^{(i)} - \boldsymbol{\alpha}^{\mathrm{rb}} \right\|_{\mathbb{V}}^2}
\]
is minimised \cite{rozza_reduced_2008}.  
In the following, we consider the Hilbert space $\mathbb{V} = L^{2}([0,1],\mathcal{C})$.

The POD space is constructed by selecting the $M$ eigenfunctions associated to the $M$ largest eigenvalues of the correlation operator, defined as
\[
\mathrm{C}(\boldsymbol{\alpha}) = \frac{1}{N} \sum_{i=1}^{N} \left( \boldsymbol{\alpha}, \boldsymbol{\alpha}^{(i)} \right)_{\mathbb{V}} \, \boldsymbol{\alpha}^{(i)},
\]
where $(\cdot,\cdot)_{\mathbb{V}}$ denotes the inner product in $\mathbb{V}$.

To compute these eigenfunctions, one first assembles the covariance matrix
\[
A_{ij} = \left( \boldsymbol{\alpha}^{(i)}, \boldsymbol{\alpha}^{(j)} \right)_{\mathbb{V}}, \quad i,j = 1,\dots,N,
\]
and computes its eigenpairs $(\lambda^{(i)}, \boldsymbol{v}^{(i)})$, where the eigenvalues $\lambda^{(i)}$ are ordered in descending magnitude. 

An orthonormal basis $\{\boldsymbol{\xi}_{i}\}_{i=1}^{M}$ of the reduced space $\mathbb{V}_{\mathrm{rb}}$ is then given by
\[
\boldsymbol{\xi}_{i} = \frac{1}{\sqrt{\lambda^{(i)}}} \sum_{j=1}^{N} \left( \boldsymbol{v}^{(i)} \cdot \boldsymbol{e}^{(j)} \right) \boldsymbol{\alpha}^{(j)}, 
\qquad i = 1,\dots,M,
\]
where $\{\boldsymbol{e}^{(j)}\}_{j=1}^{N}$ denotes the canonical basis of $\mathbb{R}^{N}$.

Thanks to the orthonormality of the basis $\{\boldsymbol{\xi}_{i}\}_{i=1}^{M}$, the orthogonal projection of any function $\boldsymbol{\alpha} \in \mathbb{V}$ onto $\mathbb{V}_{\mathrm{rb}}$ is given by
\[
\Pi_{\mathbb{V}_{\mathrm{rb}}} \boldsymbol{\alpha} 
= \sum_{i=1}^{M} \left( \boldsymbol{\alpha}, \boldsymbol{\xi}_{i} \right)_{\mathbb{V}} \, \boldsymbol{\xi}_{i}.
\]
The use of Proper Orthogonal Decomposition (POD) in the space of FFD-ODE control points velocities is substantiated by the following theorems. Our objective is to demonstrate that is to show that the mesh reconstruction error obtained from the reconstructed sequence of finite-dimensional control-point instantaneous velocities is both upper and lower bounded by the reconstruction error associated with the reconstructed sequence of these finite-dimensional control-point instantaneous velocities themselves. \\ 
Let $\mathcal{I}=\{0,\dots,m\}\times\{0,\dots,n\}\times\{0,\dots,o\}$ denote the composite index set, and call an index
$c=(i,j,k)\in\mathcal{I}$ a {boundary index} if $i(m-i)\,j(n-j)\,k(o-k)=0$, an {interior index} otherwise; we
write $\mathcal{I}^{\circ}$ for the set of interior indices and
\[
\mathcal{C}_{0}:=\{\boldsymbol{v}\in\mathcal{C}\;:\;
v_{c}=\boldsymbol{0}\ \ \forall\,c\in\mathcal{I}\setminus\mathcal{I}^{\circ}\}.
\]
Then it holds the following theorem.

\begin{theorem}\label{thm:stability}
Let $m,n,o\ge 2$ and $\bar m=\max\{m,n,o\}$. Let
$\boldsymbol{\gamma},\boldsymbol{\delta}\in C([0,1],\mathcal{C})$ take
values in $\mathcal{C}_{0}$, let $\boldsymbol{x}^{(0)}\in[0,1]^{3}$,
and let $\boldsymbol{y}_{\boldsymbol{\gamma}},
\boldsymbol{y}_{\boldsymbol{\delta}}$ denote the corresponding
solutions of
\[
\dot{\boldsymbol{y}}(t)=\boldsymbol{F}_{\boldsymbol{\alpha}(t)}
\bigl(\boldsymbol{y}(t)\bigr),
\qquad \boldsymbol{y}(0)=\boldsymbol{x}^{(0)},
\qquad \boldsymbol{\alpha}\in\{\boldsymbol{\gamma},\boldsymbol{\delta}\}.\]
 Define
\[
A_{1}:=2\sqrt3\,\bar m\,\|\boldsymbol{\gamma}\|_{L^{1}([0,1],\mathcal{C})},
\qquad
A_{2}:=2\sqrt3\,\bar m\,\|\boldsymbol{\gamma}\|_{L^{2}([0,1],\mathcal{C})},
\]
so that $A_{1}\le A_{2}$. Then, for all $t\in[0,1]$,
\[
\|\boldsymbol{y}_{\boldsymbol{\delta}}(t)
 -\boldsymbol{y}_{\boldsymbol{\gamma}}(t)\|_{1}
\;\le\;
\sqrt{\tfrac{3}{8}}\;
\exp(A_{1})\,
\|\boldsymbol{\delta}-\boldsymbol{\gamma}\|_{\mathbb{V}},
\]
and
\[
\|\boldsymbol{y}_{\boldsymbol{\delta}}
 -\boldsymbol{y}_{\boldsymbol{\gamma}}\|_{H^{1}([0,1],\mathbb{R}^{3})}
\;\le\;
\sqrt{\tfrac{3}{8}}\,
\sqrt{\exp(2A_{1})+\bigl(1+A_{2}\exp(A_{1})\bigr)^{2}}
\;\|\boldsymbol{\delta}-\boldsymbol{\gamma}\|_{\mathbb{V}} .
\]
\end{theorem}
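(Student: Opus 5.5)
The plan is a Grönwall-type stability estimate for the two trajectories, with the constants coming from explicit bounds on the Bernstein basis. Set $\boldsymbol{e}(t):=\boldsymbol{y}_{\boldsymbol{\delta}}(t)-\boldsymbol{y}_{\boldsymbol{\gamma}}(t)$.

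\textbf{Step 1: the trajectories stay in the cube.} Since the controls take values in $\mathcal{C}_{0}$, every boundary control point is zero. Fix a face, say $x=0$. There $b_{m,i}(0)=0$ for every $i\ge 1$, so only the indices with $i=0$ survive, and these are boundary indices. Hence $\boldsymbol{F}_{\boldsymbol{\alpha}(t)}$ vanishes identically on $\partial[0,1]^{3}$. Each boundary point is therefore an equilibrium, and by uniqueness of solutions (the field is polynomial in $\boldsymbol{x}$ and continuous in $t$) trajectories starting in $[0,1]^{3}$ cannot reach the boundary. So $\boldsymbol{y}_{\boldsymbol{\gamma}}(t)$ and $\boldsymbol{y}_{\boldsymbol{\delta}}(t)$ stay in $[0,1]^{3}$ for all $t$, and all Bernstein bounds below may be used along the trajectories.

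\textbf{Step 2: two pointwise estimates on $[0,1]^{3}$.}
\begin{itemize}
\item[(a)] \emph{Consistency bound.} For $\boldsymbol{v}\in\mathcal{C}_{0}$,
\[
\|\boldsymbol{F}_{\boldsymbol{v}}(\boldsymbol{x})\|_{1}\le\sum_{c\in\mathcal{I}^{\circ}}\|v_{c}\|_{1}B_{c}(\boldsymbol{x})\le\sqrt3\,\|\boldsymbol{v}\|\Bigl(\sum_{c\in\mathcal{I}^{\circ}}B_{c}^{2}\Bigr)^{1/2}.
\]
  \begin{itemize}
  \item Use $\sum_{c\in\mathcal{I}^{\circ}}B_{c}^{2}\le\max_{c\in\mathcal{I}^{\circ}}B_{c}\cdot\sum_{c}B_{c}=\max_{c\in\mathcal{I}^{\circ}}B_{c}$, since the Bernstein basis is a partition of unity.
  \item For $1\le i\le m-1$ with $m\ge2$, one has $\max b_{m,i}\le\tfrac12$. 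Hence each interior $B_{c}$ is at most $\tfrac18$, giving $\|\boldsymbol{F}_{\boldsymbol{v}}\|_{1}\le\sqrt{3/8}\,\|\boldsymbol{v}\|$.
  \end{itemize}
\item[(b)] \emph{Lipschitz bound.} Use $b_{m,i}'=m\,(b_{m-1,i-1}-b_{m-1,i})$. Together with partition of unity this gives $\sum_i|b'_{m,i}|\le 2m$. Bounding each column of the Jacobian then yields, in the $1$-norm,
\[
\|\boldsymbol{F}_{\boldsymbol{v}}(\boldsymbol{x})-\boldsymbol{F}_{\boldsymbol{v}}(\boldsymbol{x}')\|_{1}\le L(\boldsymbol{v})\|\boldsymbol{x}-\boldsymbol{x}'\|_{1},\qquad L(\boldsymbol{v})=2\sqrt3\,\bar m\,\|\boldsymbol{v}\|,
\]
with Cauchy--Schwarz again turning the component sum into the Euclidean/Frobenius norm on $\mathcal{C}$.
\end{itemize}

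\textbf{Step 3: Grönwall for the pointwise bound.} Write
\[
\dot{\boldsymbol{e}}=\bigl(\boldsymbol{F}_{\boldsymbol{\delta}(t)}-\boldsymbol{F}_{\boldsymbol{\gamma}(t)}\bigr)(\boldsymbol{y}_{\boldsymbol{\delta}})+\bigl(\boldsymbol{F}_{\boldsymbol{\gamma}(t)}(\boldsymbol{y}_{\boldsymbol{\delta}})-\boldsymbol{F}_{\boldsymbol{\gamma}(t)}(\boldsymbol{y}_{\boldsymbol{\gamma}})\bigr).
\]
Since $\boldsymbol{e}(0)=0$, integrating and applying (a) and (b) gives
\[
\|\boldsymbol{e}(t)\|_{1}\le\sqrt{3/8}\int_{0}^{1}\|\boldsymbol{\delta}-\boldsymbol{\gamma}\|\,ds+\int_{0}^{t}L(\boldsymbol{\gamma}(s))\,\|\boldsymbol{e}(s)\|_{1}\,ds.
\]
The first integral is at most $\|\boldsymbol{\delta}-\boldsymbol{\gamma}\|_{\mathbb{V}}$ by Cauchy--Schwarz in time. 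Grönwall's inequality, with $\int_{0}^{1}L(\boldsymbol{\gamma})=A_{1}$, then yields the first claimed bound. The relation $A_{1}\le A_{2}$ is simply $L^{1}\le L^{2}$ on $[0,1]$.

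\textbf{Step 4: the $H^{1}$ bound.}
\begin{itemize}
\item The $L^{2}$ part of $\boldsymbol{e}$ is bounded by $\sup_{t}\|\boldsymbol{e}(t)\|$, which gives the $\exp(2A_{1})$ term.
\item For the derivative, take the $L^{2}$ norm of the same splitting of $\dot{\boldsymbol{e}}$. The first term contributes at most $\sqrt{3/8}\,\|\boldsymbol{\delta}-\boldsymbol{\gamma}\|_{\mathbb{V}}$. The second contributes at most $\|L(\boldsymbol{\gamma})\|_{L^{2}}\sup_{t}\|\boldsymbol{e}(t)\|\le A_{2}\sqrt{3/8}\,e^{A_{1}}\|\boldsymbol{\delta}-\boldsymbol{\gamma}\|_{\mathbb{V}}$.
\item Adding the two squares gives the stated constant.
\end{itemize}

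\textbf{Expected difficulty.} The main obstacle is Step 2: matching the exact constants $\sqrt{3/8}$ and $2\sqrt3\,\bar m$ requires care about norm conventions. I would first fix the convention for the $1$-norm on $\mathbb{R}^{3}$ versus the Frobenius norm on $\mathcal{C}$, then verify the $\tfrac12$ bound on interior Bernstein polynomials, which is where the hypothesis $m,n,o\ge2$ is used.
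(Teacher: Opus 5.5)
Your proposal is correct and follows the paper's proof essentially step by step. The shared ingredients are the forward invariance of $[0,1]^{3}$ (the field vanishes on the faces), the splitting $\dot{\boldsymbol e}=(\boldsymbol\delta-\boldsymbol\gamma)\boldsymbol B(\boldsymbol y_{\boldsymbol\delta})+\boldsymbol\gamma\bigl(\boldsymbol B(\boldsymbol y_{\boldsymbol\delta})-\boldsymbol B(\boldsymbol y_{\boldsymbol\gamma})\bigr)$, the constants $\sqrt{3/8}$ (from interior $B_c\le\tfrac18$) and $2\sqrt3\,\bar m$ (from $b'_{m,i}=m(b_{m-1,i-1}-b_{m-1,i})$), Grönwall, and the $L^{2}$-in-time bound on the derivative; the one fact you assert without proof, $b_{m,i}\le\tfrac12$ for $0<i<m$, the paper obtains from $b_{m,i}^{2}=r\,b_{m,i-1}b_{m,i+1}$ with $r\le4$ plus AM--GM and the partition of unity.
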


We can further generalise this framework by simultaneously considering multiple trajectories. Let
\[
\begin{cases}
\dot{\boldsymbol{y}}_{\boldsymbol{\delta}}(t,\boldsymbol{x})=\boldsymbol{F}_{\boldsymbol{\delta}(t)}\big(\boldsymbol{y}_{\boldsymbol{\delta}}(t,\boldsymbol{x})\big),\\[4pt]
\boldsymbol{y}_{\boldsymbol{\delta}}(0,\boldsymbol{x})=\boldsymbol{x},
\end{cases}
\]
and
\[
\begin{cases}
\dot{\boldsymbol{y}}_{\boldsymbol{\gamma}}(t,\boldsymbol{x})=\boldsymbol{F}_{\boldsymbol{\gamma}(t)}\big(\boldsymbol{y}_{\boldsymbol{\gamma}}(t,\boldsymbol{x})\big),\\[4pt]
\boldsymbol{y}_{\boldsymbol{\gamma}}(0,\boldsymbol{x})=\boldsymbol{x}.
\end{cases}
\]
By integrating in the spatial variable and invoking the previous theorem, we define the function space
\[
\mathbb{W}=\Big\{\boldsymbol{f}:[0,1]\times[0,1]^{3}\to\mathbb{R}^{3}\,\big|\; (\boldsymbol{f},\boldsymbol{f})_{\mathbb{W}}<+\infty\Big\},
\]
where the inner product is given by
\[
(\boldsymbol{f},\boldsymbol{g})_{\mathbb{W}}
=\int_{[0,1]^{3}}\int_{0}^{1}\big(\boldsymbol{f}(t,\boldsymbol{x}),\boldsymbol{g}(t,\boldsymbol{x})\big)_{\mathbb{R}^{3}}
+\left(\frac{\partial\boldsymbol{f}(t,\boldsymbol{x})}{\partial t},\frac{\partial\boldsymbol{g}(t,\boldsymbol{x})}{\partial t}\right)_{\mathbb{R}^{3}}\,dt\,d\boldsymbol{x}.
\]
Under these definitions, we obtain the estimate
\[
\|\boldsymbol{y}_{\boldsymbol{\delta}}-\boldsymbol{y}_{\boldsymbol{\gamma}}\|_{\mathbb{W}}
\leq \sqrt{1+2A e^{A}+\big(A^{2}+1\big)e^{2A}}\;\|\boldsymbol{\gamma}-\boldsymbol{\delta}\|_{\mathbb{V}}.
\]
It also holds a reverse inequality.
\begin{theorem}\label{thm:lower}
Let $m,n,o\ge 2$, $\bar{m}=\max\{m,n,o\}$, and let
$\boldsymbol{\alpha},\boldsymbol{\gamma}\in C([0,1],\mathcal{C}_{0})$.
Then $\boldsymbol{y}_{\boldsymbol{\alpha}},
\boldsymbol{y}_{\boldsymbol{\gamma}}\in\mathbb{W}$ and
\[
\|\boldsymbol{y}_{\boldsymbol{\alpha}}
-\boldsymbol{y}_{\boldsymbol{\gamma}}\|_{\mathbb{W}}
\;\ge\;
\frac{\omega_{\boldsymbol{\gamma}}}
{\sqrt{1+12\,\bar{m}^{2}\,
\|\boldsymbol{\alpha}\|_{L^{\infty}([0,1],\mathcal{C})}^{2}}}\;
\|\boldsymbol{\alpha}-\boldsymbol{\gamma}\|_{\mathbb{V}},
\]
with $\omega_{\boldsymbol{\gamma}}=\frac{\exp \left(-2 \sqrt{3} \bar{m}\|\boldsymbol{\gamma}\|_{L^1([0,1], c)}\right)}{(2 m+1)\binom{2 m}{m}(2 n+1)\binom{2 n}{n}(2 o+1)\binom{2 o}{o}}$. 
\end{theorem}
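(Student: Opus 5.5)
The plan is to work with the time-derivative part of the $\mathbb{W}$-norm alone, since $\|\boldsymbol{f}\|_{\mathbb{W}}^{2}\ge\int_{[0,1]^3}\int_0^1|\partial_t\boldsymbol{f}|^2\,dt\,d\boldsymbol{x}$ and the $L^2$ part only helps. Set $\boldsymbol{e}(t,\boldsymbol{x})=\boldsymbol{y}_{\boldsymbol{\alpha}}(t,\boldsymbol{x})-\boldsymbol{y}_{\boldsymbol{\gamma}}(t,\boldsymbol{x})$ and $\boldsymbol{\delta}=\boldsymbol{\alpha}-\boldsymbol{\gamma}$. By linearity of $\boldsymbol{\delta}\mapsto\boldsymbol{F}_{\boldsymbol{\delta}}$ we can write
\[
\partial_t\boldsymbol{e}=\boldsymbol{F}_{\boldsymbol{\delta}(t)}\bigl(\boldsymbol{y}_{\boldsymbol{\gamma}}\bigr)+\Bigl[\boldsymbol{F}_{\boldsymbol{\alpha}(t)}\bigl(\boldsymbol{y}_{\boldsymbol{\alpha}}\bigr)-\boldsymbol{F}_{\boldsymbol{\alpha}(t)}\bigl(\boldsymbol{y}_{\boldsymbol{\gamma}}\bigr)\Bigr].
\]

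\textbf{Step 1: Lipschitz bound.} Using $b_{m,i}'=m(b_{m-1,i-1}-b_{m-1,i})$ and the partition of unity, we get $\|\nabla\boldsymbol{F}_{\boldsymbol{\alpha}(t)}\|\le 2\sqrt3\,\bar m\,\|\boldsymbol{\alpha}(t)\|$. This is the same constant that produces $A_1$ in Theorem~\ref{thm:stability}, and it gives $L:=2\sqrt3\,\bar m\|\boldsymbol{\alpha}\|_{L^\infty}$. Hence $|\boldsymbol{F}_{\boldsymbol{\delta}(t)}(\boldsymbol{y}_{\boldsymbol{\gamma}})|\le|\partial_t\boldsymbol{e}|+L|\boldsymbol{e}|$. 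By Cauchy--Schwarz, $(a+Lb)^2\le(1+L^2)(a^2+b^2)$, so after integrating,
\[
\int_0^1\!\!\int_{[0,1]^3}\bigl|\boldsymbol{F}_{\boldsymbol{\delta}(t)}(\boldsymbol{y}_{\boldsymbol{\gamma}}(t,\boldsymbol{x}))\bigr|^2\,d\boldsymbol{x}\,dt\le\bigl(1+12\bar m^2\|\boldsymbol{\alpha}\|_{L^\infty}^2\bigr)\|\boldsymbol{e}\|_{\mathbb{W}}^2 .
\]
This produces exactly the denominator in the statement. Membership $\boldsymbol{y}_{\boldsymbol{\alpha}}\in\mathbb{W}$ follows along the way: the fields are bounded and smooth, and the flow stays in the cube (see Step 2).

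\textbf{Step 2: change of variables.} Because $\boldsymbol{\gamma}$ takes values in $\mathcal{C}_0$, every interior basis function $B_c$ vanishes on the faces of the cube. Hence $\boldsymbol{F}_{\boldsymbol{\gamma}(t)}=0$ on $\partial[0,1]^3$, and the flow $\boldsymbol{x}\mapsto\boldsymbol{y}_{\boldsymbol{\gamma}}(t,\boldsymbol{x})$ is a diffeomorphism of $[0,1]^3$ onto itself. By Liouville's formula its Jacobian determinant equals $\exp\int_0^t\operatorname{div}\boldsymbol{F}_{\boldsymbol{\gamma}(s)}\,ds$. The divergence is bounded by the same gradient bound, so the inverse Jacobian is at least $\exp(-2\sqrt3\,\bar m\|\boldsymbol{\gamma}\|_{L^1})$. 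Substituting $\boldsymbol{z}=\boldsymbol{y}_{\boldsymbol{\gamma}}(t,\boldsymbol{x})$ then gives
\[
\int_{[0,1]^3}\bigl|\boldsymbol{F}_{\boldsymbol{\delta}(t)}(\boldsymbol{y}_{\boldsymbol{\gamma}}(t,\boldsymbol{x}))\bigr|^2\,d\boldsymbol{x}\;\ge\;e^{-2\sqrt3\bar m\|\boldsymbol{\gamma}\|_{L^1}}\int_{[0,1]^3}\bigl|\boldsymbol{\delta}(t)\boldsymbol{B}(\boldsymbol{z})\bigr|^2\,d\boldsymbol{z}.
\]

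\textbf{Step 3: Gram matrix.} Next I bound $\int|\boldsymbol{\delta}(t)\boldsymbol{B}|^2\ge\lambda_{\min}(G)\,|\boldsymbol{\delta}(t)|^2$, where $G$ is the $L^2([0,1]^3)$ Gram matrix of the trivariate Bernstein basis. This matrix is the Kronecker product of the univariate Gram matrices, so its smallest eigenvalue is the product of their smallest eigenvalues. Each univariate factor satisfies $\lambda_{\min}\ge 1/\bigl((2m+1)\binom{2m}{m}\bigr)$.

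\textbf{Main obstacle.} I expect this last univariate eigenvalue bound to be the hardest step. I would first try to obtain it from the explicit formula $\int_0^1 b_{m,i}b_{m,j}=\binom{m}{i}\binom{m}{j}/\bigl((2m+1)\binom{2m}{i+j}\bigr)$ together with the known inverse of the Bernstein Gram matrix, or by comparing with the Legendre basis; failing that, I would cite the corresponding result from the literature.

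\textbf{Assembling the bound.} Integrating in $t$ and combining the three steps gives
\[
\|\boldsymbol{e}\|_{\mathbb{W}}^{2}\ \ge\ \frac{\omega_{\boldsymbol{\gamma}}}{1+12\bar m^2\|\boldsymbol{\alpha}\|_{L^\infty}^2}\,\|\boldsymbol{\delta}\|_{\mathbb{V}}^2 .
\]
Since $\omega_{\boldsymbol{\gamma}}\le1$ we have $\sqrt{\omega_{\boldsymbol{\gamma}}}\ge\omega_{\boldsymbol{\gamma}}$, and since $1+12\bar m^2\|\boldsymbol{\alpha}\|_{L^\infty}^2\ge1$ its square root is at most itself. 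Taking square roots therefore yields the stated, slightly weaker, inequality.
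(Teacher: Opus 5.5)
Your proof is correct, and it differs from the paper's at the one step that determines the constant. The paper multiplies the error equation on the right by $\boldsymbol{B}(\boldsymbol{y}_{\boldsymbol{\gamma}})^{T}$ and integrates in $\boldsymbol{x}$, which gives $(\boldsymbol{\alpha}-\boldsymbol{\gamma})(t)\,\boldsymbol{M}_{\boldsymbol{\gamma}}(t)+\boldsymbol{\alpha}(t)\boldsymbol{H}(t)$. It then inverts the deformed Gram matrix using $\|\boldsymbol{M}_{\boldsymbol{\gamma}}(t)^{-1}\|_{2}\le\omega_{\boldsymbol{\gamma}}^{-1}$, so a full factor $\omega_{\boldsymbol{\gamma}}$ survives in the final bound.

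You instead isolate $\boldsymbol{F}_{\boldsymbol{\delta}(t)}(\boldsymbol{y}_{\boldsymbol{\gamma}})$ pointwise and use coercivity of the quadratic form $\int_{[0,1]^{3}}|\boldsymbol{\delta}(t)\boldsymbol{B}(\boldsymbol{y}_{\boldsymbol{\gamma}})|^{2}\,d\boldsymbol{x}=\operatorname{tr}\bigl(\boldsymbol{\delta}(t)\boldsymbol{M}_{\boldsymbol{\gamma}}(t)\boldsymbol{\delta}(t)^{T}\bigr)\ge\omega_{\boldsymbol{\gamma}}\|\boldsymbol{\delta}(t)\|_{\mathcal{C}}^{2}$. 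Your Steps 2--3 are exactly the paper's proof that $\lambda_{\min}(\boldsymbol{M}_{\boldsymbol{\gamma}}(t))\ge\omega_{\boldsymbol{\gamma}}$. As a result only $\sqrt{\omega_{\boldsymbol{\gamma}}}$ enters, and the stated inequality follows from $\sqrt{\omega_{\boldsymbol{\gamma}}}\ge\omega_{\boldsymbol{\gamma}}$.

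The other ingredients coincide with the paper's: the Lipschitz constant $2\sqrt{3}\,\bar m$ via $\|\nabla\boldsymbol{B}\|_{F}$, invariance of the cube, Liouville's formula with the change of variables, and the Kronecker structure of the Bernstein mass matrix. The paper's route buys an explicit least-squares formula recovering $\boldsymbol{\alpha}(t)-\boldsymbol{\gamma}(t)$ from the motion. Yours is shorter and gives the sharper constant $\sqrt{\omega_{\boldsymbol{\gamma}}/(1+12\,\bar m^{2}\|\boldsymbol{\alpha}\|_{L^{\infty}([0,1],\mathcal{C})}^{2})}$.

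A few small points:

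\begin{itemize}
\item Your opening sentence says you will use only the time-derivative part of the $\mathbb{W}$-norm. In fact Step 1 needs the $L^{2}$ part to absorb $L|\boldsymbol{e}|$, which is what you actually do.
\item The closing remark about the square root of the denominator is unnecessary.
\item ``The divergence is bounded by the same gradient bound'' should be stated as $|\operatorname{tr}(\boldsymbol{\gamma}(s)\nabla\boldsymbol{B})|\le\|\boldsymbol{\gamma}(s)\|_{\mathcal{C}}\|\nabla\boldsymbol{B}\|_{F}\le 2\sqrt{3}\,\bar m\,\|\boldsymbol{\gamma}(s)\|_{\mathcal{C}}$. The trace of a $3\times3$ matrix is controlled by its operator norm only up to a factor $3$, which would change the exponent in $\omega_{\boldsymbol{\gamma}}$.
\item For your ``main obstacle'', the paper simply cites Lyche--Scherer. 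Alternatively, the eigenvalues of the univariate Bernstein mass matrix are known in closed form: $(m!)^{2}/\bigl((m-k)!\,(m+k+1)!\bigr)$ for $k=0,\dots,m$, i.e.\ Derriennic's Bernstein--Durrmeyer eigenvalues divided by $m+1$. The minimum, at $k=m$, is $1/\bigl((2m+1)\binom{2m}{m}\bigr)$.
\end{itemize}
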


This result implies that there is a bijection between a control-points trajectory $\boldsymbol{\alpha}\in \mathbb{V}$ and its corresponding solution $\boldsymbol{y}\in \mathbb{W}$. \\ Consequently, the use of POD on the control-point instantaneous velocities does not introduce spurious solutions and allows our methodology to remain independent of the specific topology of each target mesh.

\subsection{STARFlow}
A generative model is a (potentially not absolutely continuous) parametric probability distribution that allows efficient sampling. In this work, we focus on normalising flows \cite{rezende_variational_2015}, which represent the target distribution as
\[
p_{Z}(\boldsymbol{T}_{\boldsymbol{\zeta}}(\boldsymbol{x}))\left|\operatorname{det}\left(\nabla\boldsymbol{T}_{\boldsymbol{\zeta}}(\boldsymbol{x})\right)\right|,
\]
where $\boldsymbol{T}_{\boldsymbol{\zeta}}$ is an invertible neural network with a diagonal or triangular Jacobian, parameterised by $\boldsymbol{\zeta}$, and $p_{Z}$ is a base distribution that admits efficient sampling. A sample from the generative model is produced by first drawing a sample from $Z$ and subsequently applying the transformation $\boldsymbol{T}_{\boldsymbol{\zeta}}$.
In this study, we employ the AutoEncoder–Normalising Flow architecture STARFlow \cite{gu_starflow_2025} to generate new vectors of POD coefficients. Specifically, the POD coefficient vectors serve as inputs to an AutoEncoder composed of a three-layer encoder, $\operatorname{Enc}_{\boldsymbol{\theta}}:\mathbb{R}^{M}\rightarrow\mathbb{R}^{
D}$, and a three-layer decoder, $\operatorname{Dec}_{\boldsymbol{\sigma}}:\mathbb{R}^{D}\rightarrow\mathbb{R}^{M}$. Both networks are configured with a hidden layer width of 500, ReLU activation functions, batch normalisation, and a latent space of dimension $D$. The trainable parameters of the encoder and decoder are denoted by $\boldsymbol{\theta}$ and $\boldsymbol{\sigma}$, respectively.
The network parameters are optimised in an end-to-end fashion by solving
\[
(\boldsymbol{\theta}^{\star},\boldsymbol{\sigma}^{\star})=\operatorname{arg}\min\limits_{\boldsymbol{\theta},\boldsymbol{\sigma}}\sum_{i=1}^{N}\left \|\left(\operatorname{id}-\operatorname{Dec}_{\boldsymbol{\sigma}}\circ \operatorname{Enc}_{\boldsymbol{\theta}}\right)\left(\sum_{j=1}^{M}(\boldsymbol{\alpha}^{(i)},\boldsymbol{\xi}^{(j)})_{\mathbb{V}}\boldsymbol{e}^{(j)}\right)\right\|^{2},
\]
where \(\operatorname{Enc}_{\boldsymbol{\theta}}\) and \(\operatorname{Dec}_{\boldsymbol{\sigma}}\) denote the encoder and decoder parametrised by \(\boldsymbol{\theta}\) and \(\boldsymbol{\sigma}\), respectively, and the loss function is given by the squared reconstruction error.

The resulting latent representation of the POD coefficients, as inferred by this AutoEncoder, is then utilised to train TARFlow \cite{zhai_normalizing_2025}, a probabilistic generative model based on transformer-driven autoregressive normalising flows, which in our notation is $\boldsymbol{T}_{\boldsymbol{\zeta}}$. A rigorous construction of $\boldsymbol{T}_{\boldsymbol{\gamma}}$ is provided in the appendix. As base distribution, we adopt the Standard Gaussian one.

A schematic overview of the training pipeline is reported in Fig.~\ref{fig:train_pipeline}.

\begin{figure}
    \centering
     \includegraphics[width=1.0\linewidth]{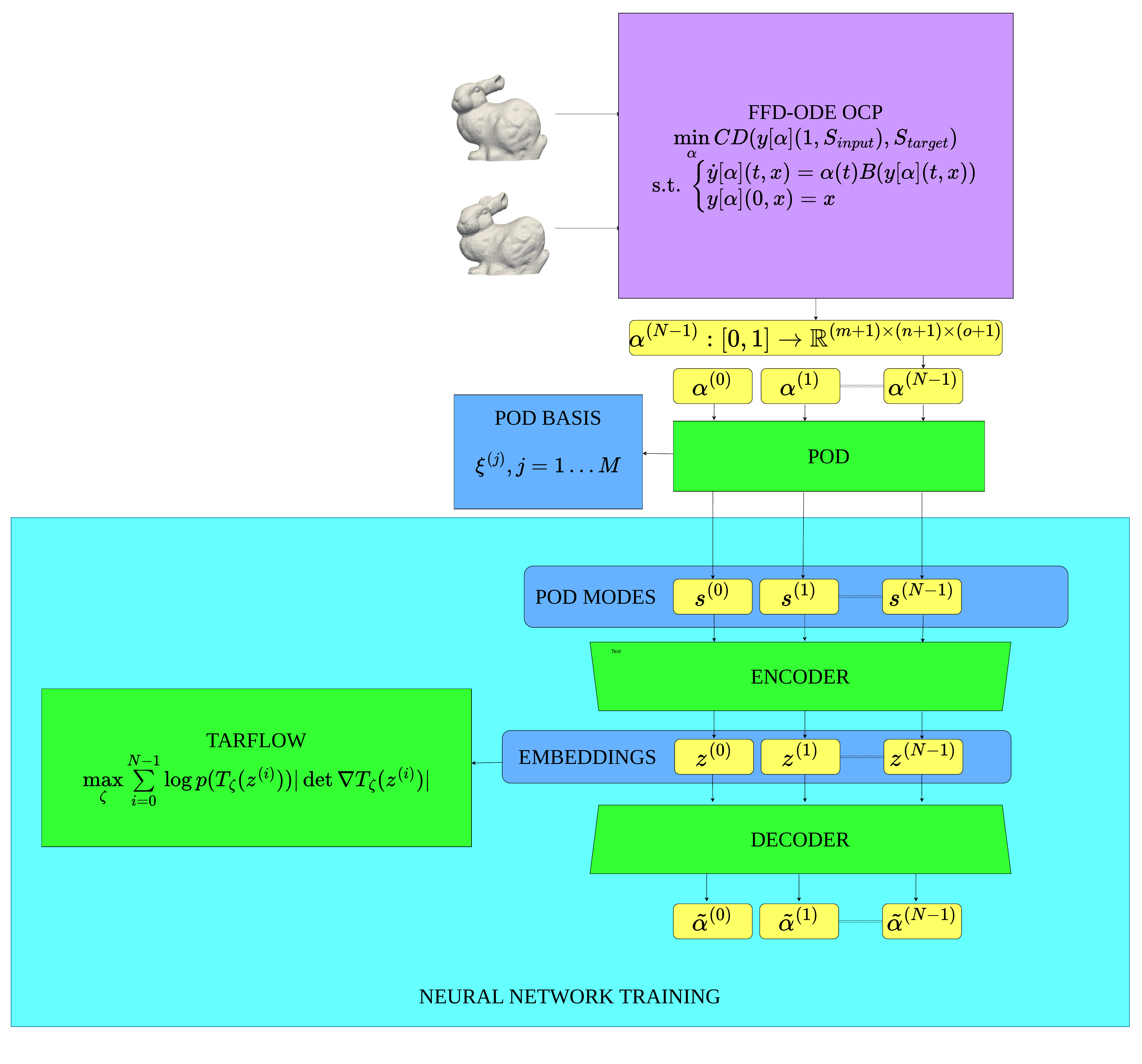}
    \caption{Summary of the training pipeline.}
    \label{fig:train_pipeline}
\end{figure}

\subsection{Mesh dynamic generation and ROM assembly}
In this section, we concisely present the procedure for generating novel mesh evolutions after completion of the training phase, with particular emphasis on its use in the construction of ROMs. The procedure is given by:
\begin{itemize}
    \item Sample $\boldsymbol{\nu}$ from the prescribed Gaussian distribution,
    \item Compute a new latent embedding $\tilde{\boldsymbol{z}}=\boldsymbol{T}_{\gamma}^{-1}(\boldsymbol{\nu})$,
    \item Compute a new POD coefficient $\tilde{\boldsymbol{s}}=\operatorname{Dec}_{\boldsymbol{\sigma}}(\tilde{\boldsymbol{z}})$,
    \item Reconstruct the control-point velocity field  $\tilde{\boldsymbol{\alpha}}=\sum\limits_{i=1}^{M}\tilde{\boldsymbol{s}}_{i}\boldsymbol{\xi}^{(i)}$,
    \item Obtain a new mesh evolution by applying $\boldsymbol{G}_{\tilde{\boldsymbol{\alpha}}}$ to the reference mesh.
\end{itemize}

The above procedure enables efficient sampling of physically consistent mesh evolutions. Once a mesh-dynamics model has been specified, numerical simulations can be carried out on the generated meshes to evaluate a prescribed quantity of interest. By iteratively applying the sampling process, one obtains a dataset of quantities of interest associated with distinct latent embeddings. This dataset is subsequently employed to construct a non-intrusive ROM, which enables computationally efficient shape optimisation of the quantity of interest, owing to the low-dimensional parametrisation of the embedding space. The ROM construction workflow is summarised in Fig.~\ref{fig:assembly_pipeline}.
\begin{figure}
    \centering
    \includegraphics[width=0.8\linewidth]{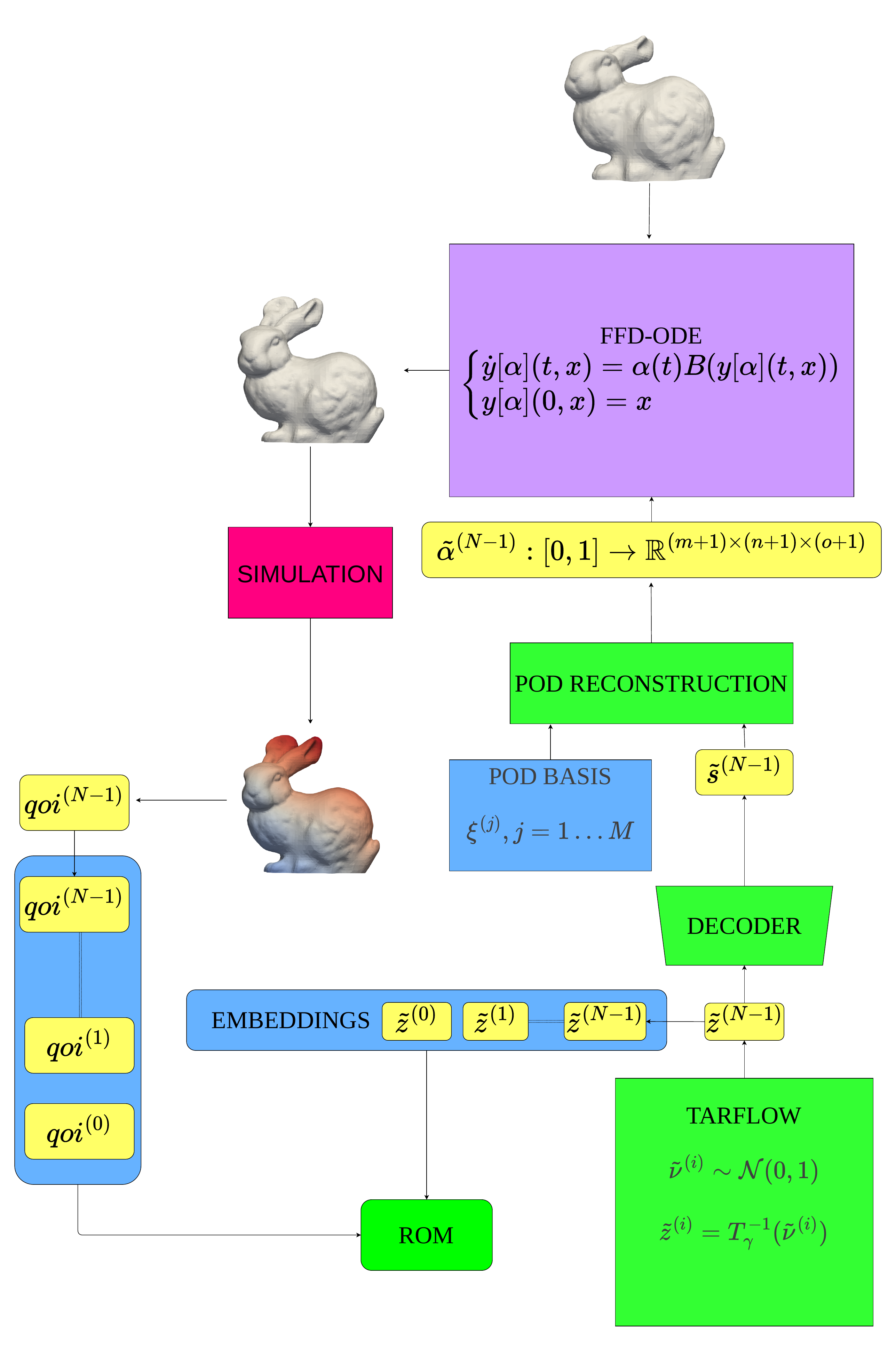}
    \caption{Workflow for the construction of the ROM by generating snapshots using STARFlow and FFD-ODE.}
    \label{fig:assembly_pipeline}
\end{figure}

\section{Test cases}
\label{sec:numerics}
We consider two benchmark problems:
\begin{itemize}
    \item an incompressible fluid flow induced by the motion of a Stanford Bunny computational domain. As the quantity of interest, we consider the spatially averaged velocity magnitude;
    \item a hypersonic flow over a deforming sphere. As the quantity of interest, we consider the average drag coefficient.
\end{itemize}
The trajectories $\boldsymbol{\alpha}_{i}$ are discretised using 102 time steps for the ROM construction. The number of control points is set to 4 along each spatial axis. Furthermore, in both cases we take as latent dimensionality for the generative models $D=10$.
Our objective is to compare the Leave-One-Out Cross-Validation (LOOCV) discretised $\ell^{1}$ and $\ell^{2}$ norms of the errors associated with ROMs based on FFD-ODE and on STARFlow. The errors are evaluated on normalised outputs, and all inputs are standardised prior to inference. Let $q \in \mathbb{R}^{N}$ and $\tilde{q} \in \mathbb{R}^{N}$ denote, respectively, the normalised reference values and the normalised LOOCV estimates of the quantity of interest. The discretised $\ell^{1}$ error is defined as
\[
\frac{1}{N}\sum_{i=1}^{N}\left|q_{i}-\tilde{q}_{i}\right|,
\]
while the discretised $\ell^{2}$ error is given by
\[
\sqrt{\frac{1}{N}\sum_{i=1}^{N}\left|q_{i}-\tilde{q}_{i}\right|^{2}}.
\]

Generative models are employed to achieve dimensionality reduction in the parameter space, since the performance of non-intrusive ROMs is strongly influenced by the number of parameters. Compared to classical parameter-space reduction techniques, generative models offer the additional capability of directly sampling new geometries.

\subsection{Flow generated by a Stanford Bunny}
We consider the fluid flow induced by the motion of a Stanford Bunny whose base remains fixed. The bunny is embedded in a rectangular box $B$ of dimensions $[0.2, 0.9] \times [0.2, 0.77] \times [0, 0.69]$ meters, with its base lying on the plane $z = 0$. The bunny occupies approximately one-third of the volume of $B$. The governing equations are the incompressible Navier–Stokes equations, which in this setting read:
\[
\begin{cases}
\displaystyle \frac{\partial\boldsymbol{u}}{\partial t}(t,\boldsymbol{x}) + (\boldsymbol{u}\cdot \nabla)\boldsymbol{u} - \mu \Delta \boldsymbol{u} + \nabla p = 0, & \boldsymbol{x}\in \Omega(t),\ t \in (0,1],\\[0.4em]
\nabla \cdot \boldsymbol{u}(t, \boldsymbol{x}) = 0, & \boldsymbol{x}\in \Omega(t),\ t \in (0,1],\\[0.4em]
\boldsymbol{u}(t,\boldsymbol{x}) =\boldsymbol{F}_{\boldsymbol{\alpha}(t)}(\boldsymbol{x}), & \boldsymbol{x} \in \partial \Omega(t),\ t \in [0,1],\\
\boldsymbol{u}(0,\boldsymbol{x})=0 & \boldsymbol{x} \in \partial \Omega(0)
\end{cases}
\]
where $\boldsymbol{u}$ denotes the velocity field, $p$ the pressure, $\mu$ the kinematic viscosity, $\Omega(t)$ the time–dependent fluid domain which moves with velocity $\boldsymbol{F}_{\boldsymbol{\alpha}(t)}(\boldsymbol{x})$, which is the FFD-ODE model defined in the previous section. We remark that the domain remains in the box as $\boldsymbol{F}_{\boldsymbol{\alpha}}$ restricted to the box boundary is 0.\\
To numerically compute the solution, we employ an Arbitrary Lagrangian–Eulerian (ALE) formulation on the reference Stanford Bunny geometry. We set $\mu = 0.01$ $m^{2}/s$ and adopt a reference velocity of $0.5$ $m/s$ during the deformation. The resulting partial differential equations are discretised using a finite element method with a pressure stabilisation term \cite{richter_fluid-structure_2017}.

The weak ALE formulation consists of the following steps:
\[
\text{Find } p_{h}\in Q_{h},\ \boldsymbol{u}_{h}\in V_{h} \text{ such that}
\]
\[
\begin{cases}
\begin{aligned}
& \int_{\Omega_0} J_\alpha\left(\Big(\mathbf{H}_\alpha^{-1}\Big(\boldsymbol{u}_{h}-\widehat{\boldsymbol{F}}_{\boldsymbol{\alpha}}\Big)\Big) \cdot \nabla\right) \boldsymbol{u}_{h} \cdot \boldsymbol{w}_{h}\, d \boldsymbol{x} \\
&\quad + \int_{\Omega_0} J_\alpha\left(\left(p_h I + \mu\left(\nabla \boldsymbol{u}_{h}\,\mathbf{H}_\alpha^{-1} + \mathbf{H}_\alpha^{-T}\left(\nabla \boldsymbol{u}_{h}\right)^T\right)\right)\mathbf{H}_\alpha^{-T}\right) : \nabla \boldsymbol{w}_{h}\, d \boldsymbol{x} \\
&\quad + \int_{\Omega_0} \left(\operatorname{div}\left(J_\alpha \mathbf{H}_\alpha^{-T} \boldsymbol{u}_{h}\right)\right) q_h\, d \boldsymbol{x} 
+ \int_{\Omega_0} J_\alpha\, \partial_t \boldsymbol{u}_{h} \cdot \boldsymbol{w}_{h}\, d \boldsymbol{x} \\
&\quad + \alpha_{\text{stab}}(\boldsymbol{x}, t) \int_{\Omega_0} \nabla p_h \cdot \nabla q_h\, d \boldsymbol{x} = 0 
\quad \forall\, q_{h}\in Q_{h},\ \boldsymbol{w}_{h}\in V_{h}, \quad \forall  t\in [0,1]
\end{aligned}\\[0.4em]
\gamma_{\star}(\boldsymbol{u}_{h}) = \mathcal{P}\widehat{\boldsymbol{F}}_{\boldsymbol{\alpha}},  \quad \forall t\in [0,1]\\
\boldsymbol{u}_{h}(0)=0
\end{cases}
\]
where $\Omega_{0}$ is the reference configuration,
\[
\begin{cases} 
\partial_t \boldsymbol{A}_\alpha(X, t)=F_{\boldsymbol{\alpha}(t)}\left(\boldsymbol{A}_{\boldsymbol{\alpha}}(X, t)\right), \\
\boldsymbol{A}_{\boldsymbol{\alpha}}(X, 0)=X
\end{cases}
\]
is the ALE map, 
$\mathbf{H}_{\boldsymbol{\alpha}} = \nabla \boldsymbol{A}_{\boldsymbol{\alpha}}$ is the deformation gradient, $J_{\boldsymbol{\alpha}} = \det \mathbf{H}_{\boldsymbol{\alpha}}$ is the Jacobian determinant, $\widehat{\boldsymbol{F}}_{\boldsymbol{\alpha}}(X, t):=\boldsymbol{F}_{\boldsymbol{\alpha}(t)}\left(\boldsymbol{A}_{\boldsymbol{\alpha}}(X, t)\right)$ is the remapped velocity field, $\gamma_{\star}$ 
denotes the trace operator, and
\[
\alpha_{\text{stab}}(\boldsymbol{x},t) = 10\left(\mu \frac{J_{\alpha}^{2/3}}{\Delta x^{2}} + |\boldsymbol{u}_{h}| \frac{J_{\alpha}^{1/3}}{\Delta x} \right),
\]
is a stabilisation term, where $\Delta x$ denotes the local mesh cell diameter. The discrete spaces $V_{h}$ and $Q_{h}$ are selected as a coupled quadratic (P2-P2) finite element spaces for the velocity and pressure fields, respectively. This choice constitutes an inf-sup unstable velocity–pressure pairing and therefore necessitates the inclusion of the aforementioned stabilisation term.
The computational mesh consists of 14,491 vertices and 67,872 cells (see Fig. \ref{fig:2}). For temporal discretisation, we employ a linearly implicit Euler scheme.

The system is solved for 100 distinct $5 \times 5 \times 5$ velocity fields of control points, denoted $\boldsymbol{\alpha}^{(i)}$, which are generated by deforming the reference Stanford Bunny via FFD with a $7 \times 7 \times 7$ grid of Gaussian–distributed control points, and then by applying FFD-ODE. All simulations are performed using FEniCSx \cite{baratta_dolfinx_2023, scroggs_basix_2022, alnaes_unified_2014}. Additional 100 velocity fields are produced using generative models. The performance of these generative models is evaluated in terms of the distribution of the average velocity norm and the root mean squared control point velocity field. For training the generative model, we retain the first $M=80$ POD modes.

Since solving the full–order PDE for each velocity field is computationally expensive, we investigate several reduced–order regression models: K–Nearest Neighbours Regression (KNN), Gaussian Process Regression (GPR), and Random Forest Regression (RF) both on the original control point velocity fields and on the latent space representations obtained from the generative models. In all cases, the dependent variable is chosen as the average velocity norm. All models are implemented via Scikit–Learn \cite{pedregosa_scikit-learn_2011}) with default hyperparameters.

\begin{figure}[ht]
    \centering
    \includegraphics[width=1\linewidth]{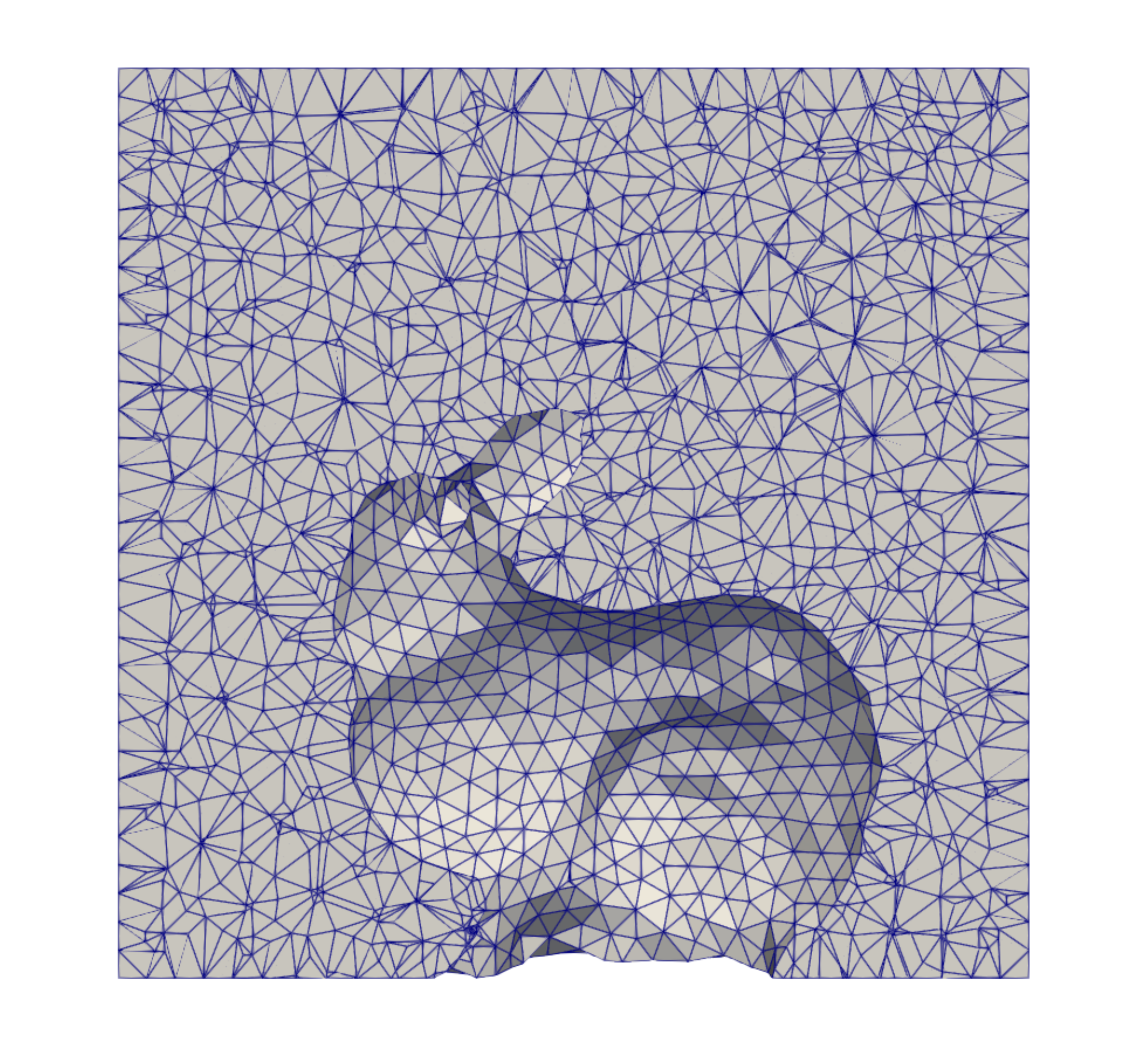}
    \caption{Section of the computational mesh used for the Stanford Bunny test case.}
    \label{fig:2}
\end{figure}

\subsection{Hypersonic flow past a deforming sphere}
We consider a turbulent hypersonic flow past a deforming sphere of radius $1$, centred at the origin and embedded in the cubic computational domain $[-40,40]^3$. The flow is simulated with the OpenFOAM \cite{weller_tensorial_1998} density-based solver \texttt{rhoPimpleFoam}, employing a Spalart–Allmaras RANS turbulence model and a Sutherland-type temperature-dependent viscosity model. The governing equations are the compressible Navier–Stokes equations for a calorically perfect gas,
\[
\begin{cases}
\displaystyle \frac{\partial \rho}{\partial t}+\nabla \cdot(\rho \boldsymbol{u})=0,\\[0.8ex]
\displaystyle \frac{\partial(\rho \boldsymbol{u})}{\partial t}+\nabla \cdot(\rho \boldsymbol{u} \otimes \boldsymbol{u})=-\nabla p+\nabla \cdot \boldsymbol{\tau},\\[0.8ex]
\displaystyle \boldsymbol{\tau}=\mu(T)\left(\nabla \boldsymbol{u}+(\nabla \boldsymbol{u})^T\right)-\frac{2}{3} \mu(T)(\nabla \cdot \boldsymbol{u}) \mathbf{I},\\[0.8ex]
\displaystyle p=\rho R T,\\[0.8ex]
\displaystyle e=C_v T,\\[0.8ex]
\displaystyle \frac{\partial(\rho e)}{\partial t}+\nabla \cdot(\rho \boldsymbol{u} e)+p \nabla \cdot \boldsymbol{u}=\nabla \cdot(\kappa(T) \nabla T)+\boldsymbol{\tau}: \nabla \boldsymbol{u},\\[0.8ex]
\displaystyle \mu(T)=A_s \frac{T^{3 / 2}}{T+T_s},\\[0.8ex]
\displaystyle \kappa(T)=\mu(T) C_v\left(1.32+1.77 \frac{R}{C_v}\right),
\end{cases}
\]
which are integrated over a physical time interval of $0.004\,\mathrm{s}$ (the FFD-ODE is scaled consistently with this time horizon). The thermophysical parameters are set to
$C_{v}=719.302\,\mathrm{J}\,\mathrm{kg}^{-1}\,\mathrm{K}^{-1}$, $A_{s}=1.4792 \times 10^{-6}$, $T_{s}=116\,\mathrm{K}$, and $R=287\,\mathrm{J}\,(\mathrm{kg}\cdot\mathrm{K})^{-1}$.
We have a Dirichlet boundary condition on the moving sphere equal to the sphere velocity, while a Mach 9 is imposed at the box boundary with a \texttt{freeStream} boundary condition.\\
As a consequence, the Reynolds number is approximately $\operatorname{Re} \approx 2 \times 10^8$, which is characteristic of a fully turbulent flow regime. The initial turbulent viscosity is set to $\nu_{0}=10^{-5}\,\mathrm{m^{2}\,s^{-1}}$.\\
The computational mesh comprises 495\,200 grid points (see Fig.~\ref{fig:3}).

The governing equations are solved for 100 distinct $5\times5\times5$ control-point velocity fields $\boldsymbol{\alpha}^{(i)}$, which are generated by deforming a reference spherical geometry via FFD using a $7\times7\times7$ grid of control points, and then by applying FFD-ODE. Generative models are subsequently employed to synthesise an additional 100 velocity fields. The performance of these generative models is evaluated in terms of the distribution of the mean drag coefficient and the root mean squared control-point velocity field.

Since the simulations are carried out with OpenFOAM, the numerical solver is fully Eulerian and therefore requires the reconstruction of the entire mesh trajectory. This procedure is computationally more expensive and technically more demanding, as mesh orthogonality may be degraded during the deformation process.

Because solving the full PDE system for each velocity field is computationally intensive, several ROMs—namely GPR, KNN, RF-are investigated. These ROMs are trained both on the original control-point velocity fields and on the latent representations of the velocity fields generated by the generative models. In addition to assessing ROM accuracy via LOOCV error, we also compare the optimisation performance of the different ROMs.

For the FFD–ODE-based ROM, we consider the following constrained optimisation problem:
\[
\begin{array}{rcll}
\displaystyle \min\limits_{\boldsymbol{\alpha}\in C([0,1],\mathbb{R}^{5\times5\times5})} &~& \mathcal{G}_{\mathrm{FFD\text{-}ODE}}(\boldsymbol{\alpha})  & \\
\mathrm{subject~to} &~& \displaystyle \frac{1}{102}\sum_{i=0}^{101}\left\|\boldsymbol{\alpha}\!\left(\frac{i}{101}\right)\right\|_{2}\le 1.65, \nonumber  \\
 &~& \left\|\boldsymbol{\alpha}(i)-\boldsymbol{\alpha}(i-1)\right\|_{2}\le 0.01, 
 &\forall i=1,\ldots,101 \\
\end{array}
\label{eq:ffd_ode_opt}
\]
where the constraints are imposed to prevent non-physical deformations and excessive mesh distortion.

For the generative-model-based approach, we solve the unconstrained optimisation problem
\[
\min_{\boldsymbol{z}\in \mathbb{R}^{10}}\mathcal{G}_{\mathrm{GEN}}(\boldsymbol{z}),
\]
since the STARFlow-generated samples already satisfy the physical and geometric constraints with high probability, owing to the implicit prior learned from the training data, which itself respects these constraints.

The functionals $\mathcal{G}_{\mathrm{FFD\text{-}ODE}}$ and $\mathcal{G}_{\mathrm{GEN}}$ denote the GPR surrogate (which exhibits the lowest prediction error among the considered ROMs) used to approximate the drag coefficient. The optimisation problems are solved using the \texttt{trust-constr} algorithm~\cite{byrd_interior_1999}.

\begin{figure}[ht]
    \centering
    \includegraphics[width=1\linewidth]{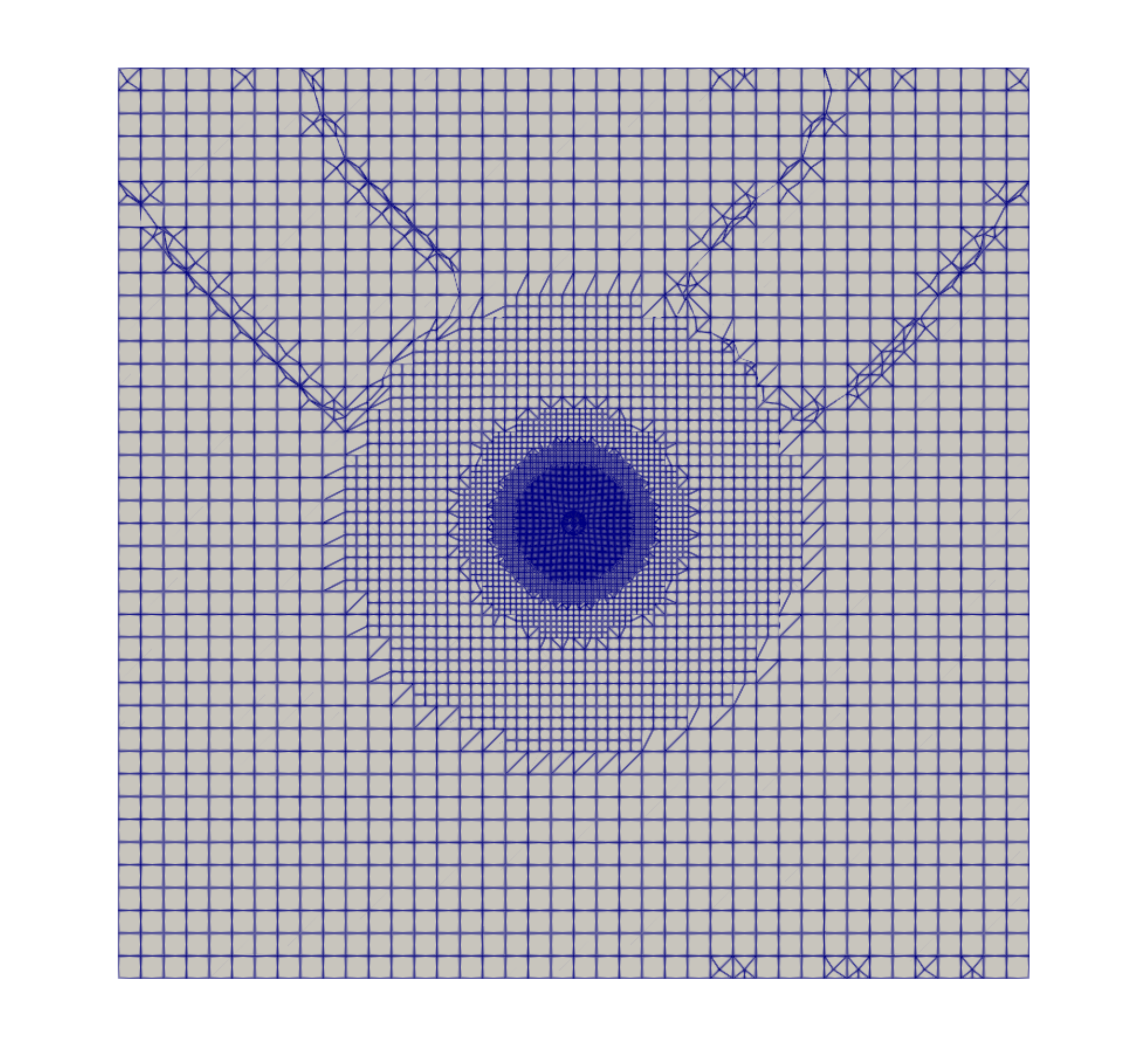}
    \caption{Detail of the computational mesh adopted for the sphere test case.}
    \label{fig:3}
\end{figure}

\section{Numerical results}
\label{sec:results}

In this section, we present results demonstrating the application of our methodology to an incompressible flow over a moving Stanford Bunny and to a compressible flow over a deforming spherical geometry.

\subsection{Stanford Bunny test case}
In Fig. \ref{fig:4}, the target configurations obtained with the FFD-ODE framework and with the generative models are reported, together with the corresponding Navier–Stokes solutions. A high degree of variability in the flow fields is shown.

\begin{figure}[ht]
    \centering
    \includegraphics[width=0.3\linewidth]{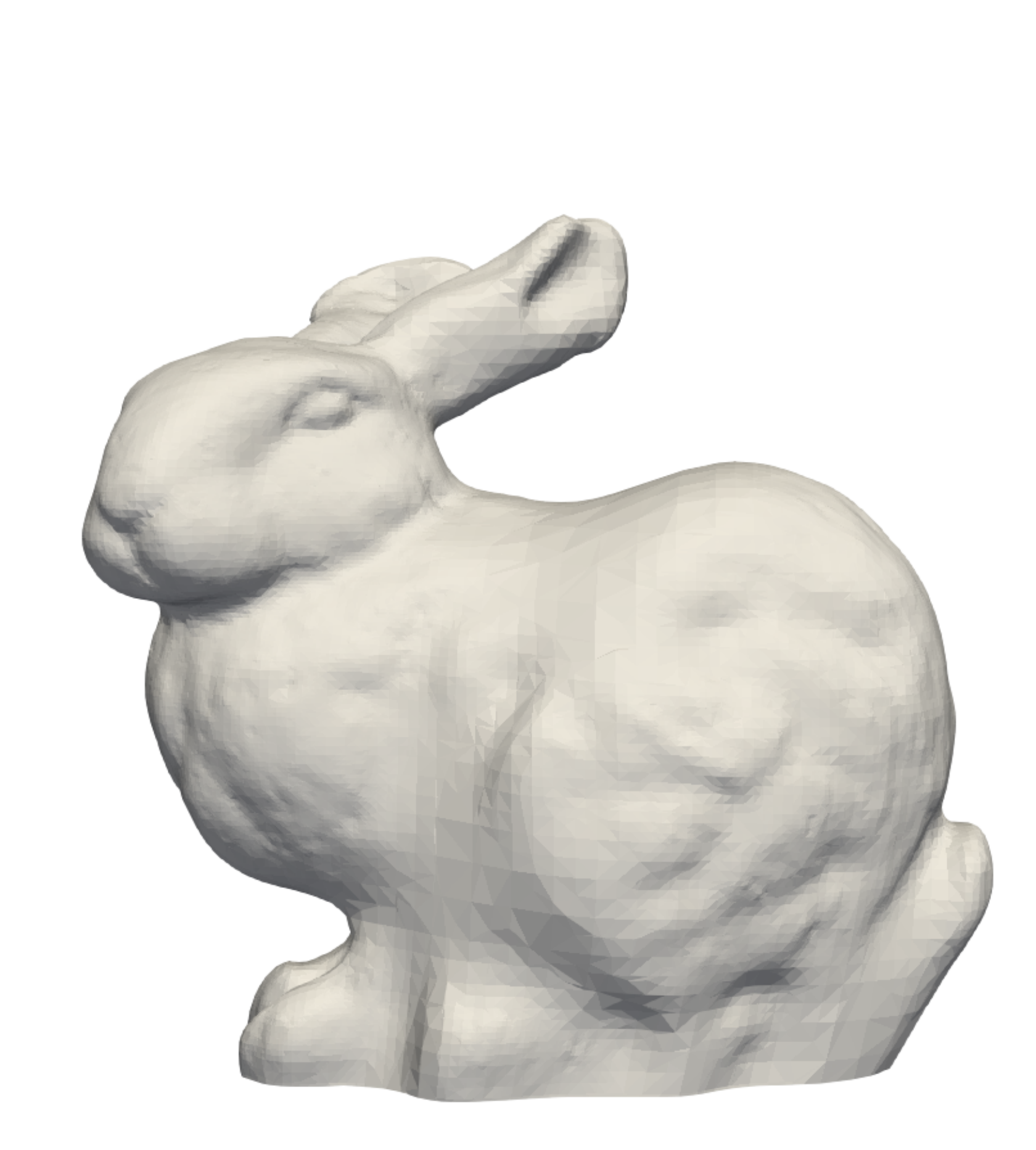}
\includegraphics[width=0.3\linewidth]{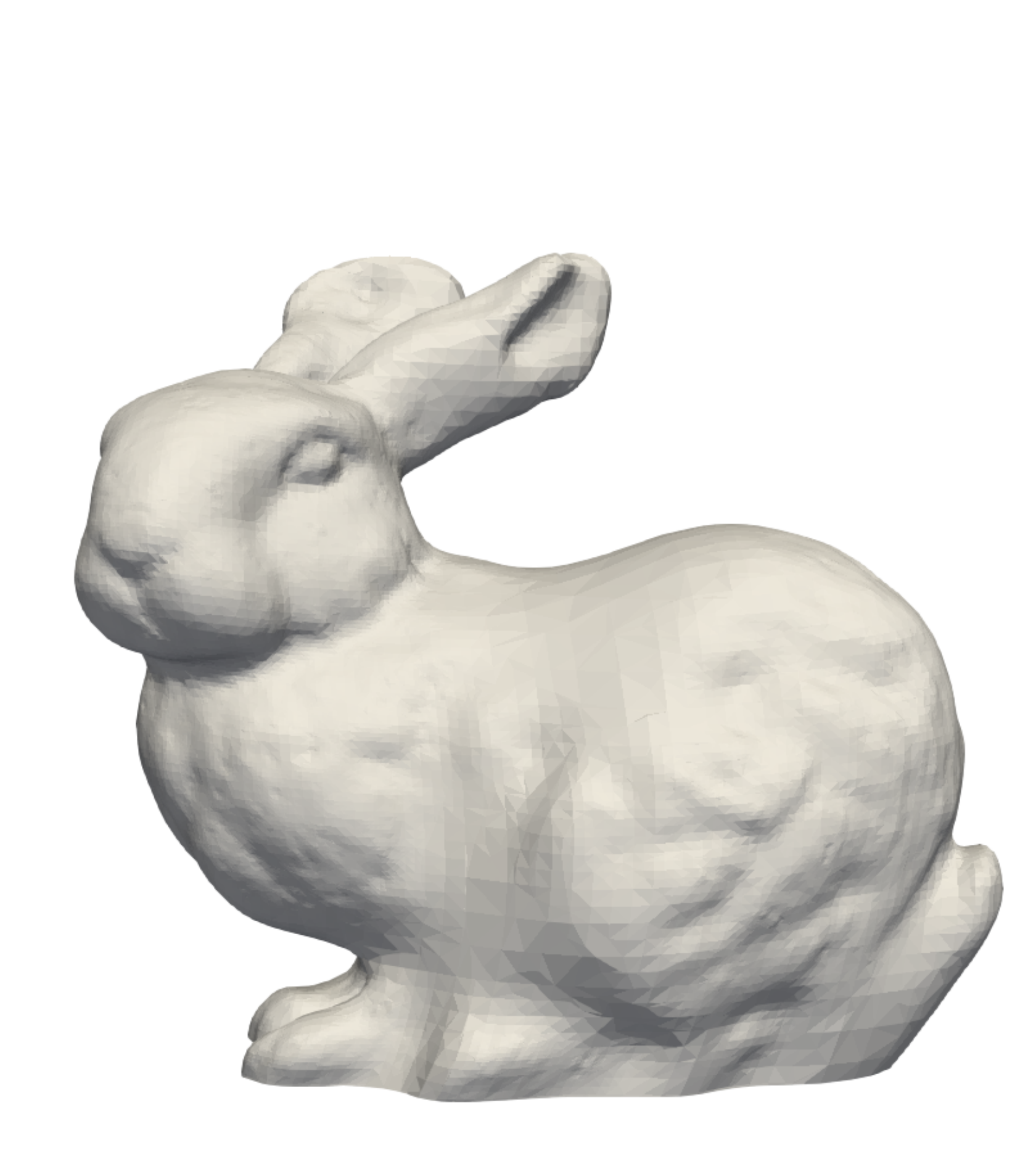}
\includegraphics[width=0.3\linewidth]{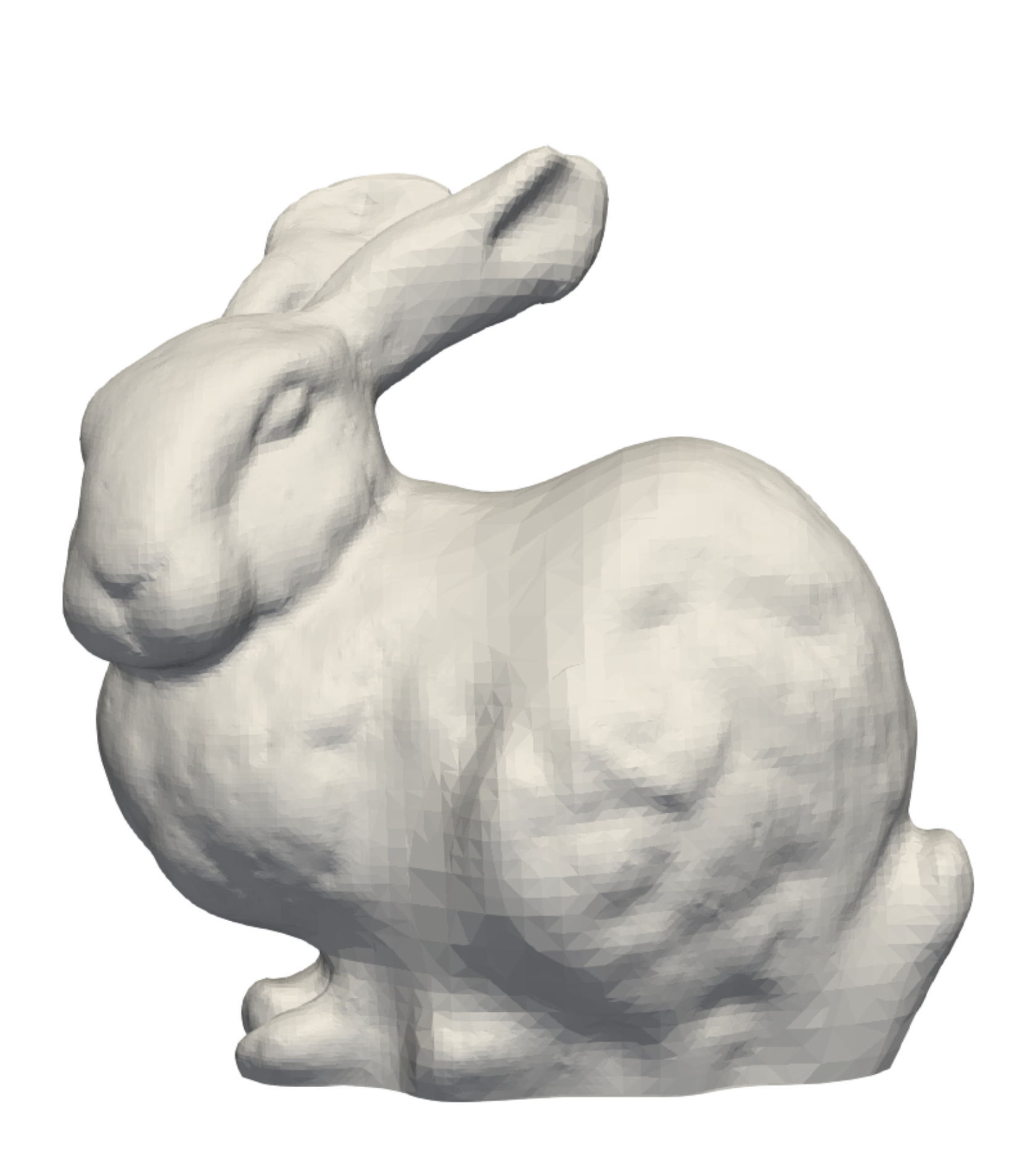}\\
    \includegraphics[width=0.3\linewidth]{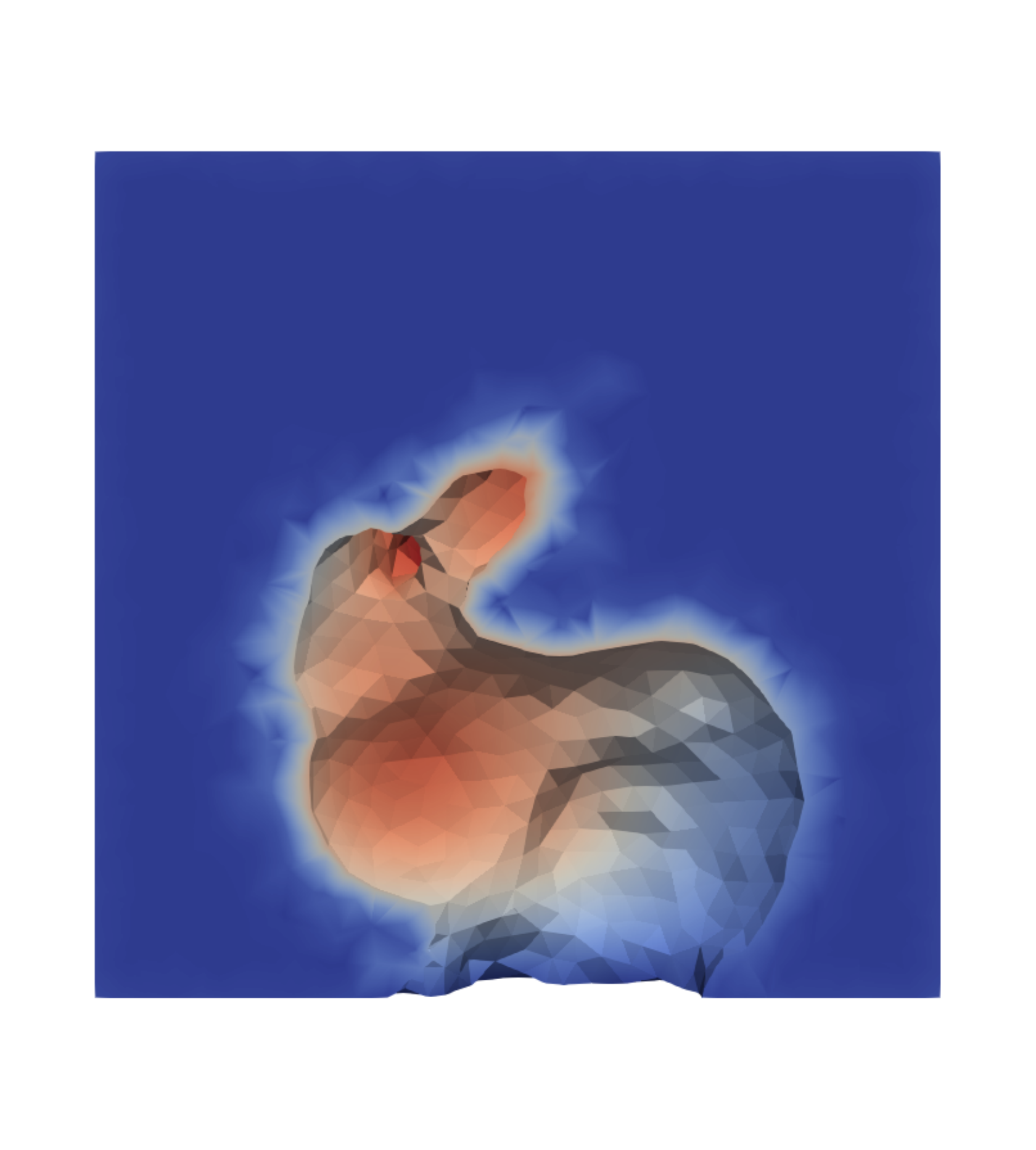}
\includegraphics[width=0.3\linewidth]{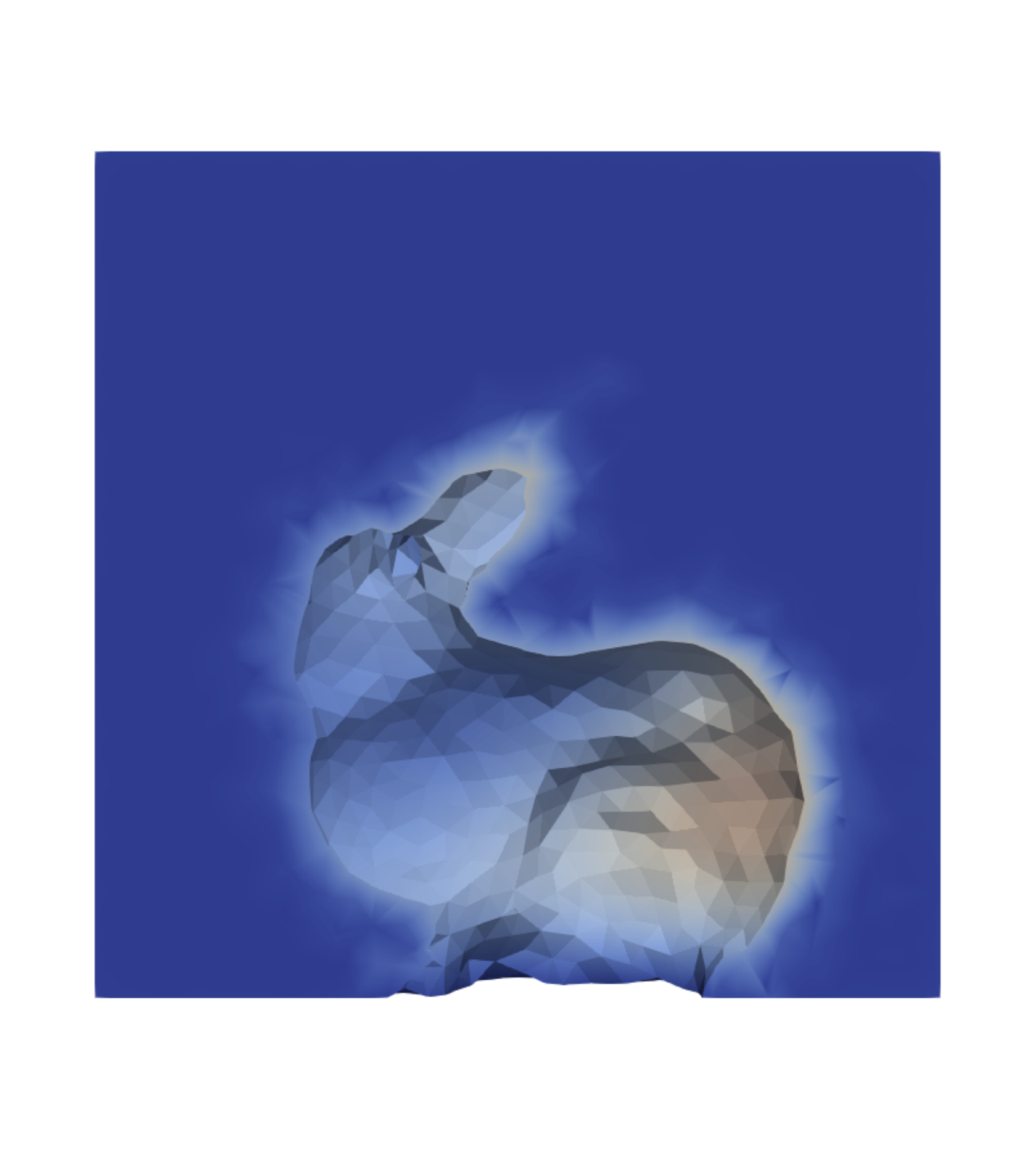}
\includegraphics[width=0.3\linewidth]{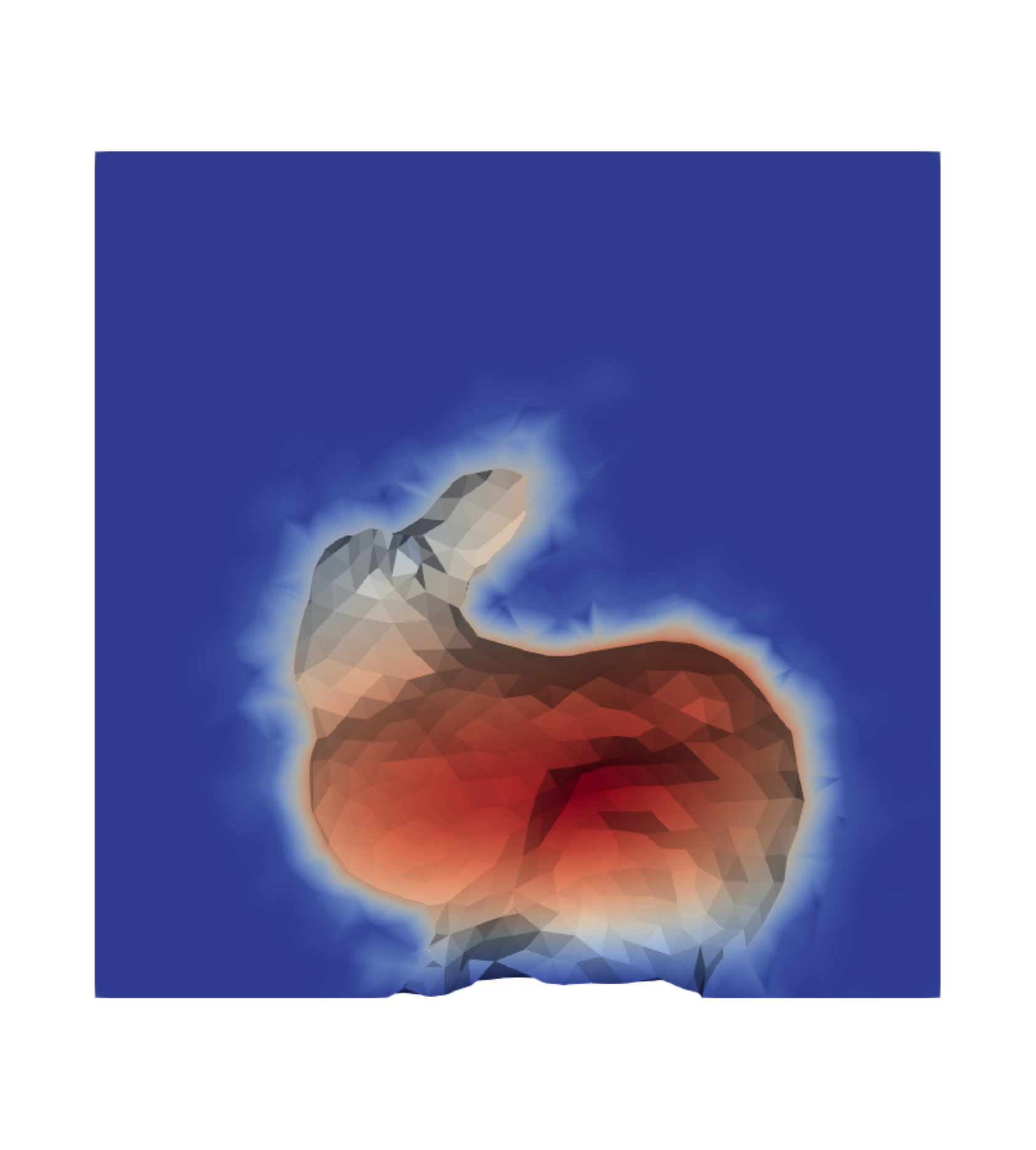}\\
\includegraphics[width=1\linewidth]{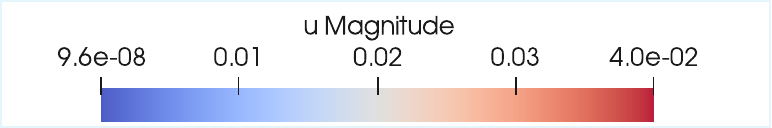}

\caption{First row: bunny deformation at time $t=1$ obtained with FFD-ODE (first two panels) and with the generative model (last panel). Second row: corresponding Navier–Stokes velocity fields. Marked discrepancies among the solutions can be observed.}
\label{fig:4}
\end{figure}

In Figs. \ref{fig:5} and \ref{fig:6} we show the empirical distributions of the mean flow velocity, defined as
\[
\frac{1}{\Omega_{0}}\int_{\Omega_{0}}\int_{0}^{1}\|\boldsymbol{u}_{h}(\boldsymbol{x},t)\|^{2}\,\mathrm{d}t\,\mathrm{d}\boldsymbol{x},
\]
and of the time-averaged squared norm of the control-point velocity field,
defined as 
\[
\int_{0}^{1}\|\boldsymbol{\alpha}(t)\|^{2}\,\mathrm{d}t,
\]
for both the FFD-ODE-based and the generative-model-based outputs. The distributions obtained with STARFlow closely match those associated with the FFD-ODE model.

\begin{figure}[ht]
    \centering
    \includegraphics[width=1\linewidth]{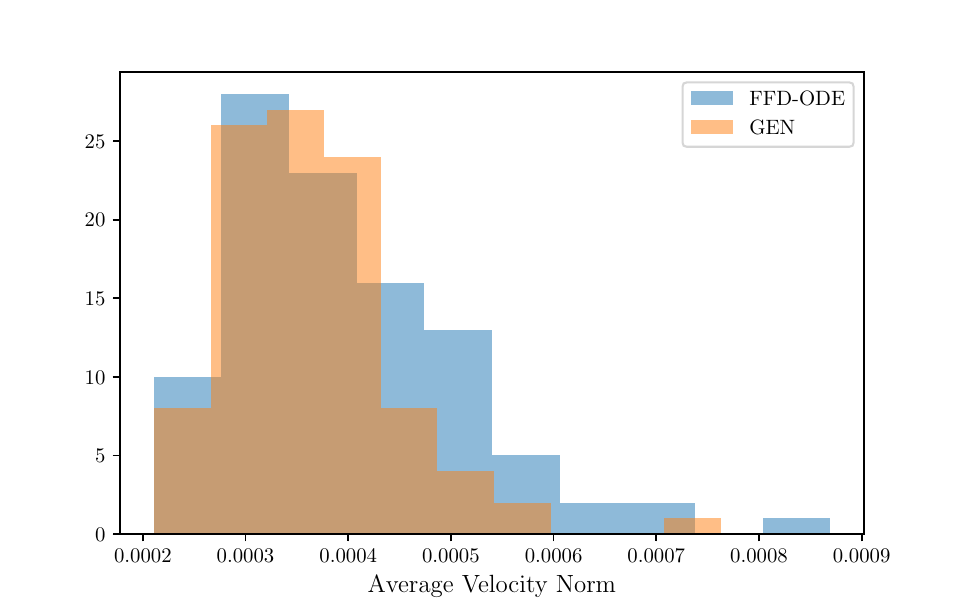}

\caption{Distribution of the average velocity norm for the FFD-ODE outputs and the STARFlow-generated outputs. STARFlow accurately reproduces the reference distribution.}
\label{fig:5}
\end{figure}

\begin{figure}[ht]
    \centering
\includegraphics[width=1\linewidth]{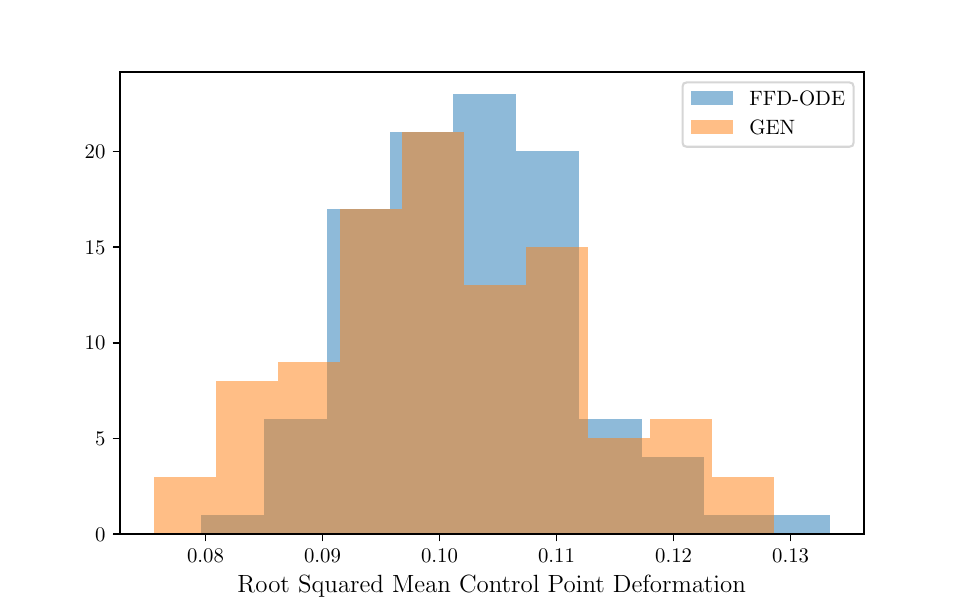}

\caption{Distribution of the root mean square control-point deformation for the FFD-ODE outputs and the STARFlow-generated outputs. STARFlow accurately reproduces the reference distribution.}
\label{fig:6}
\end{figure}

Tables \ref{l1:bunny} and \ref{l2:bunny} report the LOOCV L2 and L1 errors, respectively, of KNN, GPR, and RF models trained on the normalised FFD-ODE average velocity norms or on the normalised STARFlow average velocity norms. Surrogate models trained on the STARFlow-based quantities exhibit systematically lower errors, which we attribute to the reduced parametrisation afforded by the generative model.

\begin{table}[H]
\centering
\begin{tabular}{lcc}
\hline
 & \textbf{FFD-ODE} & \textbf{GEN} \\
\hline
KNN & 0.1906 & 0.1512 \\
GPR & 0.1672   & 0.1448 \\
RF  & 0.1723 & 0.1321 \\\hline
\hline
\end{tabular}
\caption{LOOCV L2 error of KNN, GPR, and RF when applied to normalised FFD-ODE average velocity norms and to normalised STARFlow average velocity norms. Models trained on STARFlow-based quantities yield lower errors, reflecting the advantages of the reduced parametrisation.}
\label{l1:bunny}

\end{table}

\begin{table}[H]
\centering
\begin{tabular}{lcc}
\hline
 & \textbf{FFD-ODE} & \textbf{GEN} \\
\hline
KNN & 0.1430 & 0.1090 \\
GPR & 0.1423 & 0.1080 \\
RF  & 0.1354 & 0.0979 \\\hline
\hline
\end{tabular}
\caption{LOOCV L1 error of KNN, GPR, and RF when applied to normalised FFD-ODE average velocity norms and to normalised STARFlow average velocity norms. The STARFlow-based parametrisation leads to improved predictive performance for all surrogate models.}
\label{l2:bunny}
\end{table}

\subsection{Flow past a sphere test case}

In Fig. \ref{fig:7}, representative temperature fields obtained from the generative models are displayed, illustrating the variability of the predicted solutions. In Figs. \ref{fig:8} and \ref{fig:9}, we present the distributions of the drag coefficient and of the time-averaged squared norm of the control-point velocity field for both the FFD-ODE model and the generative models. The STARFlow-based distributions are in close agreement with those of the FFD-ODE reference model.

In Tables \ref{l1:sphere} and \ref{l2:sphere}, we report the LOOCV L2 and L1 errors of KNN, GPR, and RF when trained on normalised FFD-ODE drag values and on normalised STARFlow drag values. Surrogate models trained on STARFlow-based drags consistently achieve lower errors, again reflecting the beneficial effect of the reduced parametrisation.

The optimisation performance is summarised in Fig. \ref{fig:10}. ROMs based on STARFlow exhibit superior optimisation behaviour owing to the reduced latent-space dimension and the implicit regularisation induced by the generative model. For the STARFlow-based ROMs, the drag-minimisation problem converges in 394 iterations, requiring approximately 5 seconds of wall-clock time, and the continuity constraint is satisfied despite not being explicitly enforced in the optimiser, a consequence of the generative prior. In contrast, the FFD-ODE-based ROMs become trapped in a local minimum: the \texttt{trust-constr} algorithm reaches its maximum of 1000 iterations without convergence, in about 3 hours, although the constraint is satisfied. This corresponds to at least a $3600\times$ speed-up when employing the generative models. All simulations and optimisation runs are performed on an HP laptop equipped with 16 GB RAM and an Intel i5-1340P processor.

\begin{table}[H]
\centering
\begin{tabular}{lcc}
\hline
 & \textbf{FFD-ODE} & \textbf{GEN} \\
\hline
KNN & 0.2246 & 0.1845 \\
GPR & 0.1620   &  0.1540 \\
RF  & 0.2060 &  0.1863 \\\hline
\end{tabular}
\caption{LOOCV L2 error of KNN, GPR, and RF when applied to normalised FFD-ODE drags and to normalised STARFlow drags. The STARFlow-based parametrisation yields lower errors, highlighting the benefit of the reduced latent representation.}
\label{l1:sphere}
\end{table}

\begin{table}[H]
\centering
\begin{tabular}{lcc}
\hline
 & \textbf{FFD-ODE} & \textbf{GEN} \\
\hline
KNN & 0.1830 & 0.1464 \\
GPR & 0.1510 & 0.1168 \\
RF  & 0.1669 & 0.1447 \\
\hline
\end{tabular}
\caption{LOOCV L1 error of KNN, GPR, and RF when applied to normalised FFD-ODE drags and to normalised STARFlow drags. Models trained on STARFlow outputs exhibit improved accuracy due to the reduced parameterisation.}
\label{l2:sphere}
\end{table}

\begin{figure}
    \centering
    \includegraphics[width=0.3\linewidth]{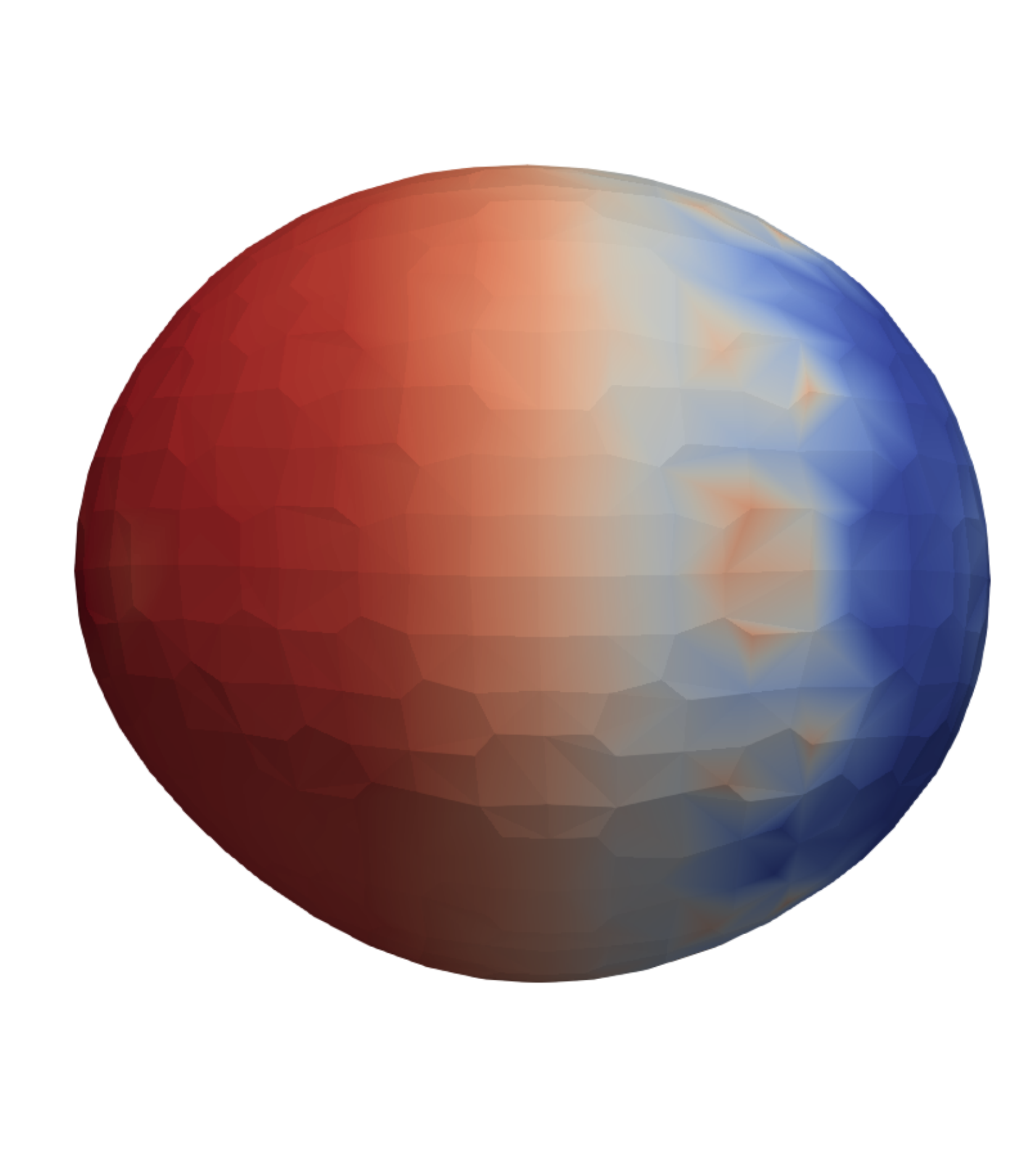}
\includegraphics[width=0.3\linewidth]{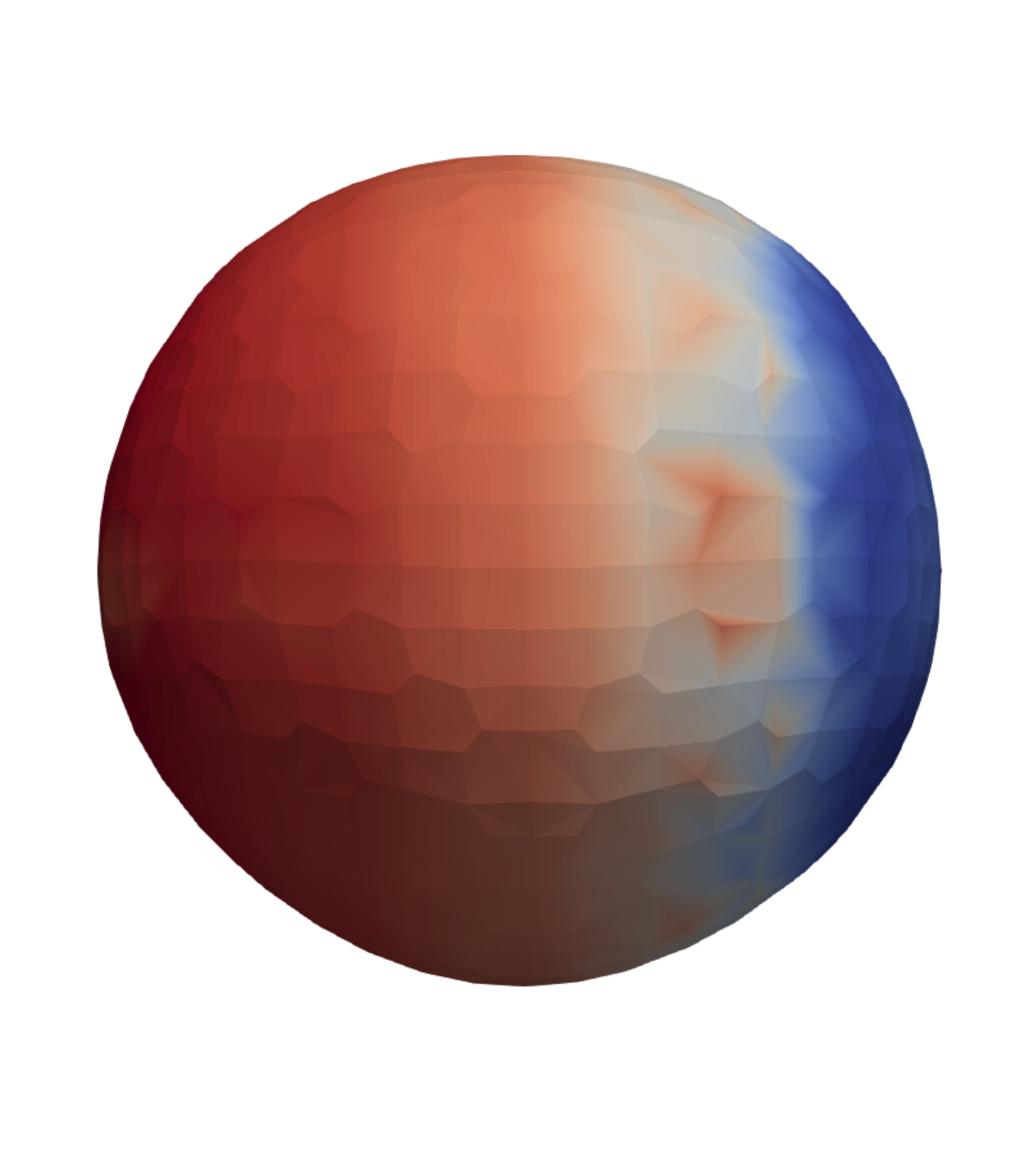}
\includegraphics[width=0.3\linewidth]{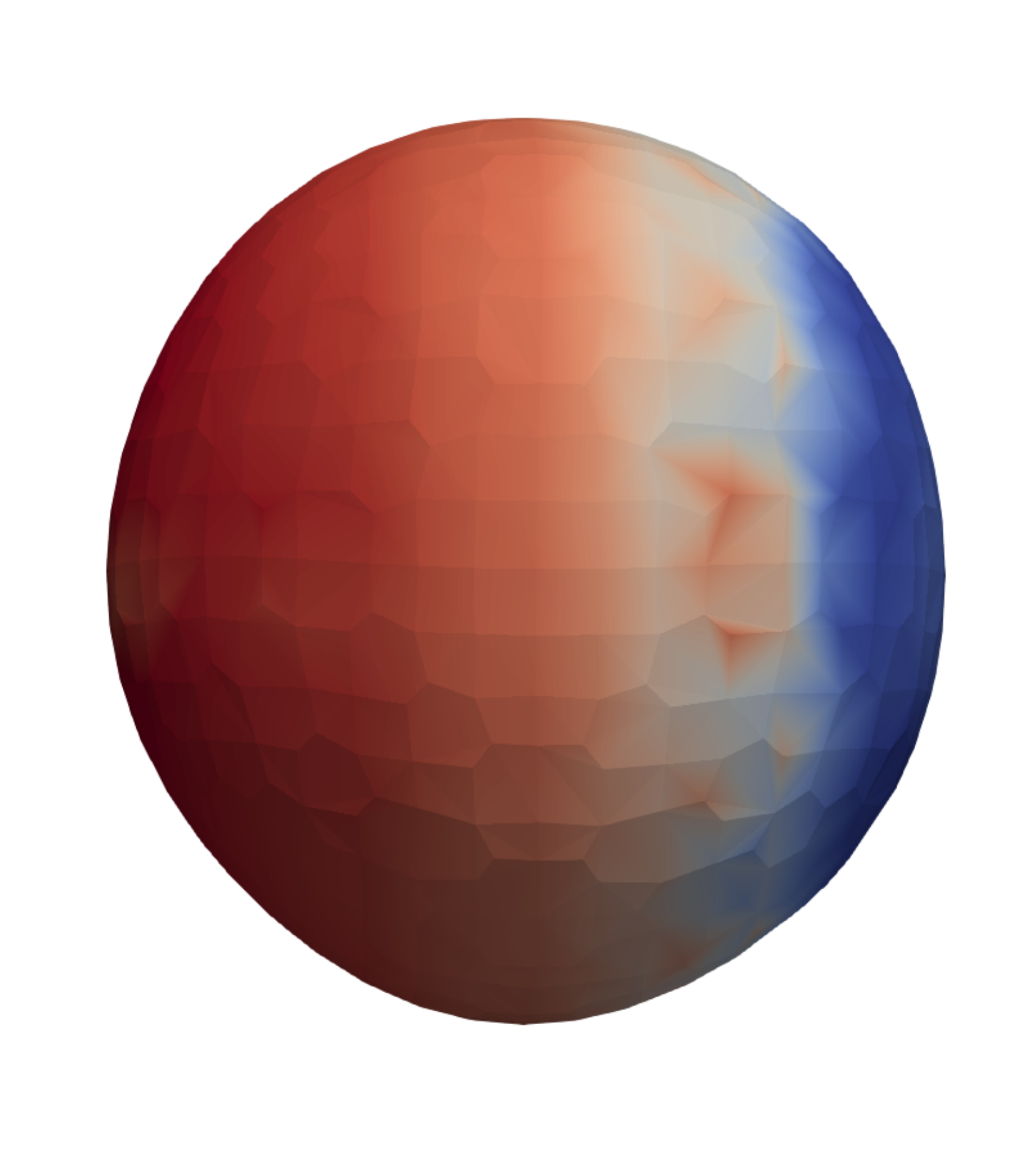}\\
\includegraphics[width=1\linewidth]{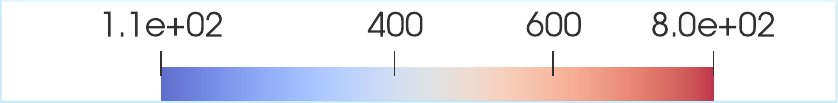}
\caption{First row: temperature field at time $t=1$ from FFD-ODE (first two panels) and from the generative model (last panel). Pronounced variability in the solutions can be observed.}
\label{fig:7}
\end{figure}
\begin{figure}
    \centering
    \includegraphics[width=1\linewidth]{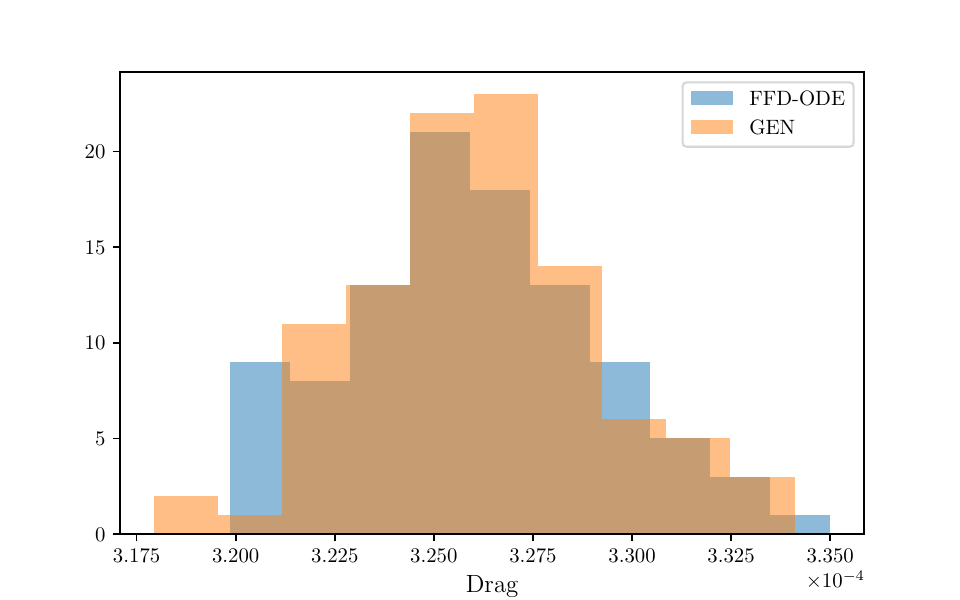}

\caption{Distribution of drag for the FFD-ODE outputs and the STARFlow-generated outputs. STARFlow accurately reproduces the drag distribution.}
\label{fig:8}
\end{figure}

\begin{figure}
    \centering
\includegraphics[width=1\linewidth]{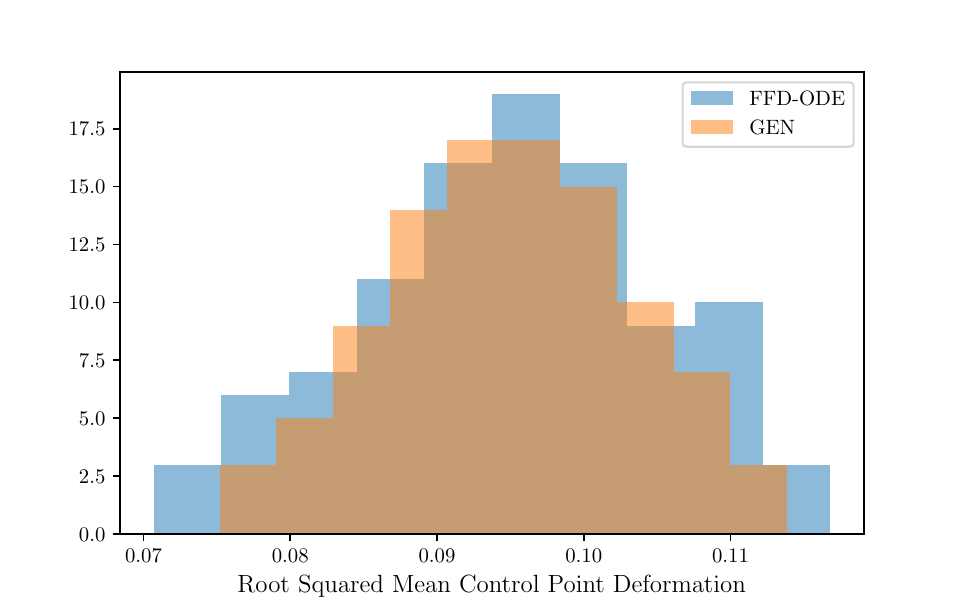}

\caption{Distribution of the root mean square control-point deformation for the FFD-ODE outputs and the STARFlow-generated outputs. STARFlow accurately reproduces the reference distribution.}
\label{fig:9}
\end{figure}

\begin{figure}
    \centering
\includegraphics[width=1\linewidth]{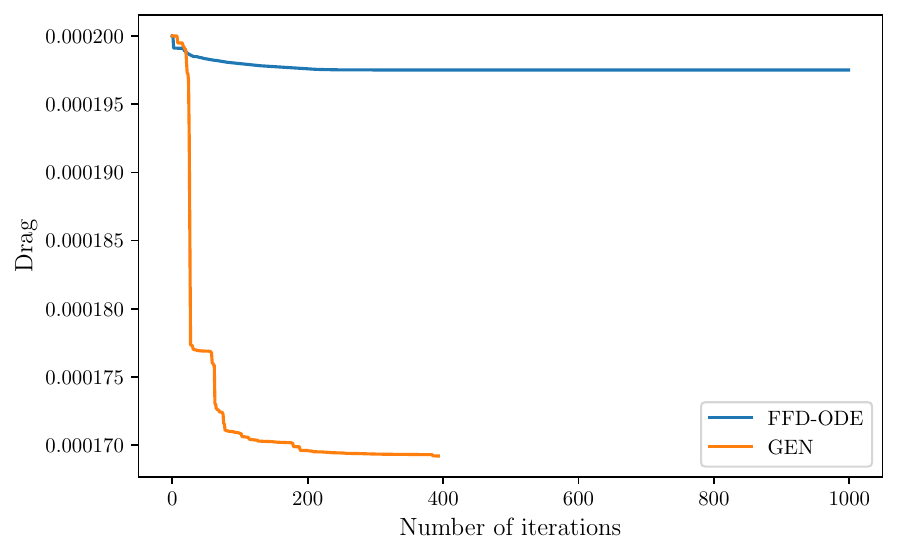}

\caption{Optimisation performance for drag minimisation with FFD-ODE-based ROMs and STARFlow-based ROMs. The STARFlow-based ROMs show superior performance owing to the reduced parametrisation and the implicit regularisation provided by the generative model.}
\label{fig:10}
\end{figure}

\section{Conclusion}
\label{sec:conclusion}

We introduce a novel FFD-ODE framework for time-dependent mesh deformation that embeds Free Form Deformation mappings within an ODE drift formulation. We establish theoretical results demonstrating universal approximation properties on genus-0 surfaces, and we construct a generative model based on POD and STARFlow that enables an efficient, low-dimensional parametrisation of control velocity fields. 

The approach is validated on two benchmark problems: incompressible flow past a deforming Stanford Bunny, discretised in FEniCSx with Streamline-Upwind Petrov–Galerkin (SUPG) stabilisation, and hypersonic flow past a deforming sphere, simulated with OpenFOAM. In both cases, the generative model accurately reproduces the distributions of QoIs, and ROMs constructed on the learned latent space yield a 30–40\% reduction in error relative to classical high-dimensional parametrisations.

The proposed methodology integrates classical geometric morphing techniques with modern data-driven approaches, providing a topologically robust and interpretable framework for shape registration and reduced-order modelling. Future research directions include extension of the framework to higher-genus surfaces, incorporation of physics-informed constraints within the generative model, and exploration of real-time optimisation applications in aerodynamic design and biomedical imaging.

\section*{Acknowledgements}
The authors acknowledge the support provided by INdAM-GNCS and the European Union NextGenerationEU, in the framework of the iNEST- Interconnected Nord-Est Innovation Ecosystem (iNEST ECS00000043 - CUP G93C22000610007) consortium.

\clearpage
\bibliographystyle{unsrt} 
\bibliography{export}
\clearpage
\appendix
\section{Proof of Theorem 2}

\begin{proof}
By \cite{chazal_condition_2005}, there exists a diffeomorphism from \(S_{1}\) to \(S_{2}\) that is isotopic to the identity. By \cite{hirsch_differential_1976}, Theorem 1.3 on p.~180, this isotopy can be extended to a smooth diffeotopy \(\boldsymbol{h} : \mathbb{R}^{3} \times [0,1] \rightarrow \mathbb{R}^{3}\), supported in \((0,1)^{3}\), such that \(\boldsymbol{h}(S_{1},1) = S_{2}\) and \(\boldsymbol{h}(S_{1},0) = S_{1}\).

For each fixed \(t\), let \(\boldsymbol{h}^{-1}\) denote the inverse of \(\boldsymbol{h}(\cdot,t)\). Define
\[
\boldsymbol{x}(t) = \boldsymbol{h}(\boldsymbol{x}_{0},t).
\]
Then, it holds that
\[
\dot{\boldsymbol{x}}(t) = \frac{\partial \boldsymbol{h}}{\partial t}(\boldsymbol{x}_{0},t) = \frac{\partial \boldsymbol{h}}{\partial t}\big(\boldsymbol{h}^{-1}(\boldsymbol{x}(t),t),t\big).
\]
Let us set
\[
\boldsymbol{g}(\boldsymbol{x},t) = \frac{\partial \boldsymbol{h}}{\partial t}\big(\boldsymbol{h}^{-1}(\boldsymbol{x},t),t\big).
\]
With this notation, the isotopy can be realised as the flow of the time-dependent vector field \(\boldsymbol{g}\), namely as the solution of
\[
\begin{cases}
\dot{\boldsymbol{x}}(t) = \boldsymbol{g}(\boldsymbol{x}(t),t),\\
\boldsymbol{x}(0) = \boldsymbol{x}_{0}.
\end{cases}
\]
Since \(\boldsymbol{h}\) is a smooth diffeotopy with support included in a compact set, the vector field \(\boldsymbol{g}\) is smooth with all derivatives bounded on $\mathbb{R}^{3}$. Thus, it is  Lipschitz continuous in \(\boldsymbol{x}\) with a bounded Lipschitz constant \(L\). By Theorem \ref{thm:1}, for any \(\nu > 0\) there exist integers \(m,n,o\) sufficiently large such that
\[
\sup_{t \in [0,1],\, \boldsymbol{x} \in [0,1]^{3}} \big\|\boldsymbol{F}_{\boldsymbol{\alpha}(t)}(\boldsymbol{x}) - \boldsymbol{g}(\boldsymbol{x},t)\big\| \le \nu.
\]

Let \(\boldsymbol{y}(t)\) denote the solution of
\[
\begin{cases}
\dot{\boldsymbol{y}}(t) = \boldsymbol{F}_{\boldsymbol{\alpha}(t)}(\boldsymbol{y}(t)),\\
\boldsymbol{y}(0) = \boldsymbol{x}_{0},
\end{cases}
\]
and define the error
\[
\boldsymbol{e}(t) = \boldsymbol{y}(t) - \boldsymbol{x}(t).
\]
Then
\[
\begin{gathered}
\dot{\boldsymbol{e}}(t) =\boldsymbol{F}_{\boldsymbol{\alpha}(t)}(\boldsymbol{y}(t)) - \boldsymbol{g}(\boldsymbol{x}(t),t)  \\
= \big[\boldsymbol{F}_{\boldsymbol{\alpha}(t)}(\boldsymbol{y}(t)) - \boldsymbol{g}(\boldsymbol{y}(t),t)\big] + \big[\boldsymbol{g}(\boldsymbol{y}(t),t) - \boldsymbol{g}(\boldsymbol{x}(t),t)\big].
\end{gathered}
\]
In integral form, this yields
\[
\boldsymbol{e}(t) = \int_{0}^{t} \big(\boldsymbol{F}_{\boldsymbol{\alpha}(s)}(\boldsymbol{y}(s)) - \boldsymbol{g}(\boldsymbol{y}(s),s)\big)\, ds \,+\, \int_{0}^{t} \big(\boldsymbol{g}(\boldsymbol{y}(s),s) - \boldsymbol{g}(\boldsymbol{x}(s),s)\big)\, ds.
\]
Taking norms and using the approximation property of \(\boldsymbol{F}_{\boldsymbol{\alpha}(s)}\) together with the Lipschitz continuity of \(\boldsymbol{g}\), we obtain
\[
\|\boldsymbol{e}(t)\| \le \nu t + L \int_{0}^{t} \|\boldsymbol{e}(s)\|\, ds.
\]
An application of Grönwall’s inequality then gives
\[
\|\boldsymbol{e}(t)\| \le \frac{\nu}{L}\big(\exp(L t) - 1\big).
\]
By choosing \(\nu = \dfrac{L}{\exp(L) - 1}\,\epsilon\), we deduce that
\[
\|\boldsymbol{e}(t)\| \le \epsilon \quad \forall\, t \in [0,1],\, \boldsymbol{x}_{0} \in [0,1]^{3}.
\]
Evaluating at \(t=1\), we obtain
\[
\|\boldsymbol{G}_{\boldsymbol{\alpha}}(\boldsymbol{x}) - \boldsymbol{h}(\boldsymbol{x},1)\| \le \epsilon \quad \forall\, \boldsymbol{x} \in S_1.
\]
Since \(\boldsymbol{h}(\cdot,1)\) is a bijection from \(S_1\) onto \(S_2\), it follows that for every \(\boldsymbol{x} \in S_1\),
\[
\operatorname{dist}\big(\boldsymbol{G}_{\boldsymbol{\alpha}}(\boldsymbol{x}), S_2\big)
\le \|\boldsymbol{G}_{\boldsymbol{\alpha}}(\boldsymbol{x}) - \boldsymbol{h}(\boldsymbol{x},1)\|
\le \epsilon.
\]
Conversely, for every \(\boldsymbol{y} \in S_2\), there exists \(\boldsymbol{x} \in S_1\) such that \(\boldsymbol{y} = \boldsymbol{h}(\boldsymbol{x},1)\), and hence
\[
\operatorname{dist}\big(\boldsymbol{y}, \boldsymbol{G}_{\boldsymbol{\alpha}}(S_1)\big)
\le \|\boldsymbol{y} - \boldsymbol{G}_{\boldsymbol{\alpha}}(\boldsymbol{x})\|
\le \epsilon.
\]
Therefore,
\[
CD\big(\boldsymbol{G}_{\boldsymbol{\alpha}}(S_1), S_2\big) \le \epsilon,
\]
which proves the claim.
Finally, since \(\boldsymbol{g}\) is supported in \((0,1)^{3}\), the coefficients associated with boundary grid points are necessarily zero. This yields the stated constraint on the weights \(\alpha_{ijk}\).
\end{proof}

\section{Proof of Theorem 3}
We need the following propositions.
\begin{proposition}
\label{lem:basis}
Let $m,n,o\ge 2$ and $\bar m=\max\{m,n,o\}$. For all
$\boldsymbol{x},\boldsymbol{y}\in[0,1]^{3}$:
\begin{enumerate}
\item[(a)] $\|\boldsymbol{B}(\boldsymbol{x})\|_{1}=1$;
\item[(b)] $\|\boldsymbol{B}(\boldsymbol{x})-\boldsymbol{B}(\boldsymbol{y})\|_{1}
      \le 2\bar m\,\|\boldsymbol{x}-\boldsymbol{y}\|_{1}$;
\item[(c)] $\displaystyle\sum_{c\in\mathcal{I}^{\circ}}
      B_{c}(\boldsymbol{x})^{2}\le \tfrac18$, and consequently, for every
      $\boldsymbol{v}\in\mathcal{C}_{0}$,
      \[
      \|\boldsymbol{v}\,\boldsymbol{B}(\boldsymbol{x})\|_{1}
      \le \sqrt{3}\,\|\boldsymbol{v}\,\boldsymbol{B}(\boldsymbol{x})\|_{2}
      \le \sqrt{\tfrac{3}{8}}\;\|\boldsymbol{v}\|_{\mathcal{C}} ;
      \]
\item[(d)] for every $\boldsymbol{v}\in\mathcal{C}$ and
      $\boldsymbol{w}\in\mathbb{R}^{(m+1)(n+1)(o+1)}$,
      $\|\boldsymbol{v}\,\boldsymbol{w}\|_{1}
      \le \sqrt{3}\,\|\boldsymbol{v}\|_{\mathcal{C}}\,\|\boldsymbol{w}\|_{1}$.
\end{enumerate}
\end{proposition}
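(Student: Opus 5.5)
We prove the four items separately. All of them follow from the tensor-product structure $B_{(i,j,k)}(\boldsymbol{x})=b_{m,i}(x)b_{n,j}(y)b_{o,k}(z)$ together with elementary facts about univariate Bernstein polynomials.

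\textbf{Item (a).} Bernstein polynomials are nonnegative on $[0,1]$ and form a partition of unity: $\sum_i b_{m,i}(x)=(x+1-x)^m=1$. Hence
\[
\|\boldsymbol{B}(\boldsymbol{x})\|_1=\sum_i b_{m,i}(x)\sum_j b_{n,j}(y)\sum_k b_{o,k}(z)=1.
\]

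\textbf{Item (b).} We first prove the univariate estimate $\sum_i|b_{m,i}(x)-b_{m,i}(x')|\le 2m|x-x'|$.
\begin{itemize}
\item The derivative formula $b_{m,i}'=m(b_{m-1,i-1}-b_{m-1,i})$ gives $\sum_i|b_{m,i}'(s)|\le 2m\sum_l b_{m-1,l}(s)=2m$.
\item Integrating this along the segment $[x,x']$ yields the univariate estimate.
\end{itemize}
For the trivariate bound, telescope one coordinate at a time:
\[
\boldsymbol{B}(x,y,z)-\boldsymbol{B}(x',y',z')=[\boldsymbol{B}(x,y,z)-\boldsymbol{B}(x',y,z)]+[\boldsymbol{B}(x',y,z)-\boldsymbol{B}(x',y',z)]+[\boldsymbol{B}(x',y',z)-\boldsymbol{B}(x',y',z')].
\]
In each bracket the unchanged factors sum to $1$ by (a). The three brackets are therefore bounded by $2m|x-x'|$, $2n|y-y'|$ and $2o|z-z'|$, which together are at most $2\bar m\|\boldsymbol{x}-\boldsymbol{y}\|_1$.

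\textbf{Item (c).} This is the main obstacle. The sum factorizes as
\[
\sum_{c\in\mathcal{I}^\circ}B_c^2=\prod_{\text{axes}}\Big(\sum_{0<i<m}b_{m,i}(x)^2\Big),
\]
so it suffices to bound each univariate factor $\sum_{0<i<m}b_{m,i}^2$, using
\[
\sum_{0<i<m}b_{m,i}^2\le\Big(\max_{0<i<m}b_{m,i}\Big)\Big(1-x^m-(1-x)^m\Big).
\]
\begin{itemize}
\item For $m=2$ the factor is $4x^2(1-x)^2\le 1/4$.
\item For general $m$, try the bound $\sum_i b_{m,i}^2\le 1$ together with the removal of the endpoint terms.
\item The target $1/8$ overall then plausibly comes from the product of a factor $\le 1/2$ on each of the three axes. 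A per-axis bound of $1/2$ follows from $\sum_{0<i<m}b_{m,i}^2\le\max_i b_{m,i}\cdot(1-x^m-(1-x)^m)$ together with a check that $\max_{0<i<m} b_{m,i}(x)\le 1/2$ whenever the interior mass is large. If this pointwise argument proves delicate, I would instead use the identity $\sum_i b_{m,i}(x)^2=\binom{2m}{m}^{-1}\sum_k \dots$ (the Bernstein square-sum formula), or argue directly by cases on whether $x$ is near an endpoint.
\end{itemize}
Given the factor bound, the consequence follows. Since $v_c=0$ off $\mathcal{I}^\circ$, Cauchy--Schwarz applied to each of the three components gives
\[
\|\boldsymbol{v}\boldsymbol{B}\|_2\le\|\boldsymbol{v}\|_{\mathcal{C}}\Big(\sum_{\mathcal{I}^\circ}B_c^2\Big)^{1/2}\le\|\boldsymbol{v}\|_{\mathcal{C}}/\sqrt8.
\]
The inequality $\|\cdot\|_1\le\sqrt3\|\cdot\|_2$ in $\mathbb{R}^3$ then yields $\sqrt{3/8}$.

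\textbf{Item (d).} Write $\boldsymbol{v}\boldsymbol{w}=\sum_c w_c v_c$. By the triangle inequality,
\[
\|\boldsymbol{v}\boldsymbol{w}\|_1\le\sum_c|w_c|\,\|v_c\|_1\le\sqrt3\,\max_c\|v_c\|_2\,\|\boldsymbol{w}\|_1\le\sqrt3\,\|\boldsymbol{v}\|_{\mathcal{C}}\|\boldsymbol{w}\|_1,
\]
where $\|\cdot\|_{\mathcal{C}}$ is the Frobenius norm, which dominates each $\|v_c\|_2$.
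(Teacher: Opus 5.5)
Your items (a), (b), (d) and the Cauchy--Schwarz consequence in (c) are correct and essentially match the paper's argument. The gap is the inequality $\sum_{c\in\mathcal{I}^{\circ}}B_c(\boldsymbol{x})^2\le\tfrac18$ itself.

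Your factorisation into $\prod_{\text{axes}}\sum_{0<i<m}b_{m,i}^2$ is valid, and so is the estimate $\sum_{0<i<m}b_{m,i}^2\le\max_{0<i<m}b_{m,i}\cdot\bigl(1-x^m-(1-x)^m\bigr)$. But these only reduce the claim to the univariate fact that $b_{m,i}(x)\le\tfrac12$ for all $0<i<m$ and $x\in[0,1]$, and you never prove that fact. You announce ``a check'', call the conclusion ``plausible'', and offer as fallback an identity written with dots.

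Your other routes do not close the gap either. The bullet ``$\sum_i b_{m,i}^2\le 1$ with the endpoint terms removed'' gives only a per-axis bound of $1$, hence a total bound of $1$ instead of $\tfrac18$. The $m=2$ computation does not extend to general $m$. Restricting the check to points where the interior mass exceeds $\tfrac12$ does not help: for $m\ge 3$ the peak $x=i/m$ of each interior $b_{m,i}$ already lies where $1-x^m-(1-x)^m>\tfrac12$. So the one nontrivial estimate of the proposition is missing.

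The paper supplies it with a short log-concavity argument, which fits directly into your per-axis scheme. For $0<i<m$ one has $b_{m,i}^2=r\,b_{m,i-1}b_{m,i+1}$ with $r=\frac{(i+1)(m-i+1)}{i(m-i)}\le 4$. By AM--GM and the partition of unity, $b_{m,i-1}b_{m,i+1}\le\tfrac14\bigl(b_{m,i-1}+b_{m,i+1}\bigr)^2\le\tfrac14\bigl(1-b_{m,i}\bigr)^2$. Hence $b_{m,i}\le\tfrac{\sqrt r}{2}\bigl(1-b_{m,i}\bigr)$, that is, $b_{m,i}\le\frac{\sqrt r}{2+\sqrt r}\le\tfrac12$.

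With this lemma each of your univariate factors is at most $\tfrac12\cdot 1$, so the product is at most $\tfrac18$. The paper instead uses the lemma to get $\max_{c\in\mathcal{I}^{\circ}}B_c\le\tfrac18$, then bounds $\sum_{c\in\mathcal{I}^{\circ}}B_c^2\le\max_{c\in\mathcal{I}^{\circ}}B_c\cdot\sum_{c\in\mathcal{I}}B_c$. This amounts to the same thing.
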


\begin{proof}
(a) On $[0,1]$ the $b_{m,i}$ are nonnegative and sum to one; the
trivariate basis inherits both properties by taking products, hence
$\|\boldsymbol{B}(\boldsymbol{x})\|_{1}
=\sum_{c}B_{c}(\boldsymbol{x})=1$.

(b) From $b'_{m,i}=m\,(b_{m-1,i-1}-b_{m-1,i})$ (with
$b_{m-1,-1}=b_{m-1,m}\equiv 0$), nonnegativity and (a) give
$\sum_{i}|b'_{m,i}(x)|\le m\sum_{i}\bigl(b_{m-1,i-1}(x)+b_{m-1,i}(x)\bigr)=2m$
for $x\in[0,1]$. Telescoping over the three coordinates and using (a)
in the frozen variables,
\[
\|\boldsymbol{B}(\boldsymbol{x})-\boldsymbol{B}(\boldsymbol{y})\|_{1}
\le 2m|x_{1}-y_{1}|+2n|x_{2}-y_{2}|+2o|x_{3}-y_{3}|
\le 2\bar m\,\|\boldsymbol{x}-\boldsymbol{y}\|_{1}.
\]

(c) We first claim $b_{m,i}(x)\le\frac12$ on $[0,1]$ whenever
$0<i<m$. Indeed, for such $i$,
\[
b_{m,i}(x)^{2}
= r\; b_{m,i-1}(x)\,b_{m,i+1}(x),
\qquad
r:=\frac{\binom{m}{i}^{2}}{\binom{m}{i-1}\binom{m}{i+1}}
 =\frac{(i+1)(m-i+1)}{i\,(m-i)}\le 4,
\]
where the bound on $r$ follows from $(i+1)/i\le 2$ and
$(m-i+1)/(m-i)\le 2$. By the AM--GM inequality and (a),
$b_{m,i-1}b_{m,i+1}\le\frac14(b_{m,i-1}+b_{m,i+1})^{2}
\le\frac14(1-b_{m,i})^{2}$, hence
$b_{m,i}\le\frac{\sqrt r}{2}(1-b_{m,i})$ and
$b_{m,i}\le\frac{\sqrt r}{2+\sqrt r}\le\frac12$.
Therefore every interior index satisfies
$B_{c}(\boldsymbol{x})\le\frac18$, and
\[
\sum_{c\in\mathcal{I}^{\circ}}B_{c}(\boldsymbol{x})^{2}
\le\Bigl(\max_{c\in\mathcal{I}^{\circ}}B_{c}(\boldsymbol{x})\Bigr)
   \sum_{c}B_{c}(\boldsymbol{x})
\le\tfrac18 .
\]
For $\boldsymbol{v}\in\mathcal{C}_{0}$ only interior columns act, so
by the Cauchy--Schwarz inequality applied componentwise,
$\|\boldsymbol{v}\,\boldsymbol{B}(\boldsymbol{x})\|_{2}
\le\|\boldsymbol{v}\|_{\mathcal{C}}
\bigl(\sum_{c\in\mathcal{I}^{\circ}}B_{c}^{2}\bigr)^{1/2}
\le\frac{1}{2\sqrt2}\|\boldsymbol{v}\|_{\mathcal{C}}$;
the $\ell^{1}$ bound follows from
$\|\cdot\|_{1}\le\sqrt3\,\|\cdot\|_{2}$ in $\mathbb{R}^{3}$.

(d) $\|\boldsymbol{v}\,\boldsymbol{w}\|_{1}
\le\sum_{c}\|v_{c}\|_{1}|w_{c}|
\le\bigl(\max_{c}\|v_{c}\|_{1}\bigr)\|\boldsymbol{w}\|_{1}
\le\sqrt3\,\|\boldsymbol{v}\|_{\mathcal{C}}\,\|\boldsymbol{w}\|_{1}$.
\end{proof}

\begin{proposition}
\label{lem:inv}
Let $\boldsymbol{\delta}\in C([0,1],\mathcal{C})$ with
$\boldsymbol{\delta}(t)\in\mathcal{C}_{0}$ for all $t$, and let
$\boldsymbol{x}^{(0)}\in[0,1]^{3}$. Then the initial value problem
$\dot{\boldsymbol{y}}=\boldsymbol{F}_{\boldsymbol{\delta}(t)}(\boldsymbol{y})$,
$\boldsymbol{y}(0)=\boldsymbol{x}^{(0)}$, admits a unique solution on
$[0,1]$, and $\boldsymbol{y}(t)\in[0,1]^{3}$ for all $t\in[0,1]$.
\end{proposition}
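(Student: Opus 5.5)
We prove Proposition~\ref{lem:inv} in three steps: local existence and uniqueness, a priori invariance of the cube, and continuation to the whole interval $[0,1]$.

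\textbf{Local well-posedness.} Extend the vector field $(t,\boldsymbol{y})\mapsto\boldsymbol{F}_{\boldsymbol{\delta}(t)}(\boldsymbol{y})=\boldsymbol{\delta}(t)\boldsymbol{B}(\boldsymbol{y})$ to all of $\mathbb{R}^{3}$ by the same polynomial formula. It is continuous in $t$ because $\boldsymbol{\delta}$ is continuous. It is polynomial, hence $C^{\infty}$, in $\boldsymbol{y}$. On any compact set $[0,1]\times K$ it is therefore Lipschitz in $\boldsymbol{y}$, uniformly in $t$, with constant controlled by $\sup_{t}\|\boldsymbol{\delta}(t)\|_{\mathcal{C}}$ and by $\sup_{K}\|\nabla\boldsymbol{B}\|$. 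On $[0,1]^{3}$, Proposition~\ref{lem:basis}(b),(d) give the explicit constant $2\sqrt3\,\bar m\,\|\boldsymbol{\delta}(t)\|_{\mathcal{C}}$. Picard--Lindelöf then yields a unique maximal solution.

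\textbf{Invariance of the cube (the main obstacle).} Since $\boldsymbol{\delta}(t)\in\mathcal{C}_{0}$, only interior indices contribute. Fix the first coordinate. For interior $i$ (with $1\le i\le m-1$), the factor $b_{m,i}(y_1)$ contains both $y_1^{i}$ and $(1-y_1)^{m-i}$. Factoring these out, we can write
\[
F_{1}(t,\boldsymbol{y})=y_{1}(1-y_{1})\,\phi(t,\boldsymbol{y}),
\]
with $\phi$ continuous and polynomial in $\boldsymbol{y}$. The analogous factorisation holds for the other coordinates.

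Consequently the faces $\{y_{1}=0\}$ and $\{y_{1}=1\}$ are invariant: the constant function $y_{1}\equiv0$ solves the scalar equation $\dot y_{1}=y_{1}(1-y_{1})\phi(t,\boldsymbol{y}(t))$, where $\boldsymbol{y}(t)$ is regarded as given. We turn this into invariance of the cube as follows. Along the solution, set $a(t)=(1-y_{1}(t))\phi(t,\boldsymbol{y}(t))$, which is continuous wherever the solution exists. Then $\dot y_{1}=a(t)y_{1}$, which integrates to
\[
y_{1}(t)=y_{1}(0)\exp\!\Bigl(\int_{0}^{t}a\Bigr).
\]
Hence $y_1$ keeps the sign of $y_1(0)\ge0$. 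Similarly, $1-y_{1}$ satisfies a linear equation, so $1-y_1\ge0$ is preserved. This gives $y_{1}(t)\in[0,1]$, and the same argument applies to $y_2$ and $y_3$. In the edge case $y_1(0)=0$, uniqueness gives $y_1\equiv0$ directly.

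This argument uses only the structure of the solution itself, avoiding a separate Nagumo-type tangency argument. The step requiring care is justifying that the factorisation holds, i.e.\ that every interior Bernstein polynomial vanishes on the relevant faces. This is exactly where the hypothesis $i(m-i)j(n-j)k(o-k)\neq0$ enters.

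\textbf{Global existence.} Because the maximal solution remains in the compact set $[0,1]^{3}$, the blow-up alternative rules out finite-time escape. The solution therefore exists on all of $[0,1]$, is unique there, and stays in $[0,1]^{3}$.
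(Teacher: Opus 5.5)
Your proof is correct, but the invariance step takes a different route from the paper's. The paper uses that, since all boundary coefficients vanish, the whole field $\boldsymbol{F}_{\boldsymbol{\delta}(t)}$ is zero on every face, so each point of $\partial[0,1]^{3}$ is an equilibrium for all $t$. It then argues by contradiction: it takes a first time $t^{*}$ at which the trajectory hits the boundary, at a point $\boldsymbol{p}$, and invokes backward uniqueness through $(t^{*},\boldsymbol{p})$ to force the trajectory to be constant. You instead use only that each normal component factors, $F_{1}=y_{1}(1-y_{1})\phi$, together with the integrating-factor identity $y_{1}(t)=y_{1}(0)\exp\bigl(\int_{0}^{t}a\bigr)$ and its analogue for $1-y_{1}$.

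This needs neither a hitting-time argument nor backward uniqueness. In fact your separate appeal to uniqueness in the case $y_{1}(0)=0$ is redundant, since the formula already gives $y_{1}\equiv 0$. Your argument is also quantitative: interior points visibly stay strictly interior, and each face is invariant. It also works under a weaker hypothesis. It suffices that the $d$-th component of $\delta_{ijk}(t)$ vanish whenever the $d$-th index is $0$ or maximal, i.e.\ a tangency (slip) condition, rather than the full requirement $\boldsymbol{\delta}(t)\in\mathcal{C}_{0}$.

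In exchange, the paper's route gives the stronger conclusion that every boundary point is fixed by the flow. Your argument does not give this; it only shows that the coordinate normal to a face stays fixed. That stronger property is not needed for the proposition itself, but it is the one relevant to the application, where the walls of the box must not move.
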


\begin{proof}
The right-hand side is polynomial in $\boldsymbol{y}$ with
coefficients continuous in $t$, hence continuous in $(t,\boldsymbol{y})$
and locally Lipschitz in $\boldsymbol{y}$ uniformly for $t\in[0,1]$ on
compact sets; local existence and (forward and backward) uniqueness
follow from the Picard--Lindel\"of theorem. On each face of the cube
the field vanishes identically: e.g.\ at $x=0$ one has
$b_{m,i}(0)=\delta_{i0}$ (Kronecker), so
$\boldsymbol{F}_{\boldsymbol{\delta}(t)}(0,y,z)
=\sum_{j,k}\delta_{0jk}(t)\,b_{n,j}(y)\,b_{o,k}(z)=\boldsymbol{0}$
because $\delta_{0jk}(t)=\boldsymbol{0}$; the remaining five faces are
analogous. Hence every $\boldsymbol{p}\in\partial[0,1]^{3}$ is an
equilibrium for all $t$. If $\boldsymbol{x}^{(0)}\in\partial[0,1]^{3}$
the solution is the constant $\boldsymbol{x}^{(0)}$. If
$\boldsymbol{x}^{(0)}\in(0,1)^{3}$ and the trajectory reached
$\partial[0,1]^{3}$ at some first time $t^{*}$ at a point
$\boldsymbol{p}$, then backward uniqueness through
$(t^{*},\boldsymbol{p})$ would force the trajectory to coincide with
the constant solution $\boldsymbol{p}$, a contradiction. Thus the
trajectory remains in the compact set $[0,1]^{3}$ as long as it
exists, which excludes blow-up and yields existence on all of
$[0,1]$.
\end{proof}

We are now ready to do the proof of the theorem.\\

\begin{proof}
Set
$\boldsymbol{z}:=\boldsymbol{y}_{\boldsymbol{\delta}}
-\boldsymbol{y}_{\boldsymbol{\gamma}}$, so that
$\boldsymbol{z}(0)=\boldsymbol{0}$ and $\boldsymbol{z}\in
C^{1}([0,1],\mathbb{R}^{3})$. Subtracting the two evolution equations
and adding and subtracting
$\boldsymbol{\gamma}(t)\boldsymbol{B}(\boldsymbol{y}_{\boldsymbol{\delta}}(t))$,
\[
\dot{\boldsymbol{z}}(t)
=\bigl(\boldsymbol{\delta}(t)-\boldsymbol{\gamma}(t)\bigr)
 \boldsymbol{B}\bigl(\boldsymbol{y}_{\boldsymbol{\delta}}(t)\bigr)
+\boldsymbol{\gamma}(t)
 \Bigl(\boldsymbol{B}\bigl(\boldsymbol{y}_{\boldsymbol{\delta}}(t)\bigr)
      -\boldsymbol{B}\bigl(\boldsymbol{y}_{\boldsymbol{\gamma}}(t)\bigr)\Bigr).
\]
Both trajectories lie in $[0,1]^{3}$ (Proposition~\ref{lem:inv}), and
$\boldsymbol{\delta}(t)-\boldsymbol{\gamma}(t)\in\mathcal{C}_{0}$, so
Proposition~\ref{lem:basis}(b)--(d) yields the pointwise differential
inequality
\begin{equation}\label{eq:diffineq}
\|\dot{\boldsymbol{z}}(t)\|_{1}
\le
\sqrt{\tfrac{3}{8}}\,
\|\boldsymbol{\delta}(t)-\boldsymbol{\gamma}(t)\|_{\mathcal{C}}
+\Lambda(t)\,\|\boldsymbol{z}(t)\|_{1},
\qquad
\Lambda(t):=2\sqrt{3}\,\bar{m}\,\|\boldsymbol{\gamma}(t)\|_{\mathcal{C}} .
\end{equation}

Since $\boldsymbol{z}(0)=\boldsymbol{0}$, the fundamental theorem of
calculus and the triangle inequality give
$\|\boldsymbol{z}(t)\|_{1}\le\int_{0}^{t}\|\dot{\boldsymbol{z}}(s)\|_{1}\,ds$,
hence by \eqref{eq:diffineq}
\[
\|\boldsymbol{z}(t)\|_{1}
\le a(t)
+\int_{0}^{t}\Lambda(s)\,\|\boldsymbol{z}(s)\|_{1}\,ds,
\qquad
a(t):=\sqrt{\tfrac{3}{8}}\int_{0}^{t}
\|\boldsymbol{\delta}(s)-\boldsymbol{\gamma}(s)\|_{\mathcal{C}}\,ds .
\]
Since $a$ is nondecreasing, Gr\"onwall's inequality in integral form
gives, for all $t\in[0,1]$,
\[
\|\boldsymbol{z}(t)\|_{1}
\le a(t)\,\exp\!\Bigl(\int_{0}^{t}\Lambda(s)\,ds\Bigr)
\le \sqrt{\tfrac{3}{8}}\;e^{A_{1}}
\int_{0}^{1}\|\boldsymbol{\delta}(s)-\boldsymbol{\gamma}(s)\|_{\mathcal{C}}\,ds
\le \sqrt{\tfrac{3}{8}}\;e^{A_{1}}\,
\|\boldsymbol{\delta}-\boldsymbol{\gamma}\|_{\mathbb{V}},
\]
where the last step is the Cauchy--Schwarz inequality in time on the
unit interval. This proves the first claim, and in particular
\begin{equation}\label{eq:sup}
\sup_{t\in[0,1]}\|\boldsymbol{z}(t)\|_{1}
\le \sqrt{\tfrac{3}{8}}\;e^{A_{1}}\,
\|\boldsymbol{\delta}-\boldsymbol{\gamma}\|_{\mathbb{V}} .
\end{equation}

Taking the $L^{2}([0,1])$ norm in $t$ of \eqref{eq:diffineq} and using
the triangle inequality in $L^{2}$,
\[
\Bigl(\int_{0}^{1}\|\dot{\boldsymbol{z}}(t)\|_{1}^{2}\,dt\Bigr)^{1/2}
\le
\sqrt{\tfrac{3}{8}}\,
\|\boldsymbol{\delta}-\boldsymbol{\gamma}\|_{\mathbb{V}}
+\|\Lambda\|_{L^{2}([0,1])}\,
 \sup_{t\in[0,1]}\|\boldsymbol{z}(t)\|_{1}
\le
\sqrt{\tfrac{3}{8}}\,
\bigl(1+A_{2}\,e^{A_{1}}\bigr)\,
\|\boldsymbol{\delta}-\boldsymbol{\gamma}\|_{\mathbb{V}},
\]
where we used $\|\Lambda\|_{L^{2}([0,1])}=A_{2}$ and \eqref{eq:sup}.
Since $\|\cdot\|_{2}\le\|\cdot\|_{1}$ in $\mathbb{R}^{3}$, the same
bounds hold for the Euclidean norms, and therefore
\[
\|\boldsymbol{z}\|_{H^{1}([0,1],\mathbb{R}^{3})}^{2}
=\int_{0}^{1}\|\boldsymbol{z}(t)\|_{2}^{2}\,dt
+\int_{0}^{1}\|\dot{\boldsymbol{z}}(t)\|_{2}^{2}\,dt
\le
\tfrac{3}{8}\Bigl(
e^{2A_{1}}
+\bigl(1+A_{2}\,e^{A_{1}}\bigr)^{2}
\Bigr)\,
\|\boldsymbol{\delta}-\boldsymbol{\gamma}\|_{\mathbb{V}}^{2},
\]
which is the second claim.
\end{proof}
\section{Proof of Theorem 4}
For proving the reverse inequality, we need the following proposition.

\begin{proposition}[The flow is a diffeomorphism of the cube]\label{lem:flow}
Let $\boldsymbol{\alpha}\in C([0,1],\mathcal{C}_{0})$. Then:
\begin{enumerate}
\item[(a)] for every $t\in[0,1]$ the map
      $\boldsymbol{y}_{\boldsymbol{\alpha}}(t,\cdot):[0,1]^{3}\to[0,1]^{3}$
      is a $C^{1}$ diffeomorphism of $[0,1]^{3}$ onto itself;
\item[(b)] the Jacobian
      $\boldsymbol{J}_{\boldsymbol{\alpha}}(t,\boldsymbol{x})
      :=\nabla_{\boldsymbol{x}}\boldsymbol{y}_{\boldsymbol{\alpha}}(t,\boldsymbol{x})$
      satisfies
      \[
      \det\boldsymbol{J}_{\boldsymbol{\alpha}}(t,\boldsymbol{x})
      =\exp\!\left(\int_{0}^{t}
      \operatorname{tr}\Bigl(\boldsymbol{\alpha}(s)\,
      \nabla\boldsymbol{B}\bigl(\boldsymbol{y}_{\boldsymbol{\alpha}}
      (s,\boldsymbol{x})\bigr)\Bigr)ds\right)>0 ;
      \]
\item[(c)] with $\bar{m}=\max\{m,n,o\}$ it holds that
      $\|\nabla\boldsymbol{B}(\boldsymbol{x})\|_{F}\le 2\sqrt{3}\,\bar{m}$
      for all $\boldsymbol{x}\in[0,1]^{3}$, and consequently
      \[
      0<\det\boldsymbol{J}_{\boldsymbol{\alpha}}(t,\boldsymbol{x})
      \le\exp\Bigl(2\sqrt{3}\,\bar{m}\,
      \|\boldsymbol{\alpha}\|_{L^{1}([0,1],\mathcal{C})}\Bigr)
      \qquad\forall\,t\in[0,1],\ \boldsymbol{x}\in[0,1]^{3}.
      \]
\end{enumerate}
\end{proposition}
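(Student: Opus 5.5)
We prove the three parts in the order (a), (b), (c), because (b) uses the differentiability established in (a) and (c) combines (b) with a bound on $\nabla\boldsymbol{B}$.

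\emph{Part (a).} By Proposition~\ref{lem:inv}, for every $\boldsymbol{x}\in[0,1]^{3}$ the forward problem has a unique solution that stays in $[0,1]^{3}$.
\begin{itemize}
\item The vector field $(t,\boldsymbol{y})\mapsto\boldsymbol{\alpha}(t)\boldsymbol{B}(\boldsymbol{y})$ is continuous in $t$ and polynomial, hence $C^{\infty}$, in $\boldsymbol{y}$. Standard differentiable dependence on initial data then makes $\boldsymbol{y}_{\boldsymbol{\alpha}}(t,\cdot)$ a $C^{1}$ map. Its Jacobian solves the variational equation
\[
\partial_{t}\boldsymbol{J}_{\boldsymbol{\alpha}}=\boldsymbol{\alpha}(t)\nabla\boldsymbol{B}(\boldsymbol{y}_{\boldsymbol{\alpha}}(t,\boldsymbol{x}))\,\boldsymbol{J}_{\boldsymbol{\alpha}},\qquad \boldsymbol{J}_{\boldsymbol{\alpha}}(0,\boldsymbol{x})=I.
\]
To handle boundary points cleanly, I would extend the polynomial field to all of $\mathbb{R}^{3}$.
\item For invertibility, fix $t$ and run the ODE backward from time $t$ to time $0$. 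The argument of Proposition~\ref{lem:inv} applies verbatim in reverse time: the cube faces are still equilibria, and backward uniqueness holds. So the backward flow also maps $[0,1]^{3}$ into itself.
\item By uniqueness, the backward flow is a two-sided inverse of $\boldsymbol{y}_{\boldsymbol{\alpha}}(t,\cdot)$, and it is $C^{1}$ by the same differentiability argument.
\end{itemize}
Hence $\boldsymbol{y}_{\boldsymbol{\alpha}}(t,\cdot)$ is a $C^{1}$ diffeomorphism of the cube onto itself.

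\emph{Part (b).} Apply Liouville's (Jacobi's) formula to the linear matrix ODE above:
\[
\tfrac{d}{dt}\det\boldsymbol{J}_{\boldsymbol{\alpha}}=\operatorname{tr}\!\bigl(\boldsymbol{\alpha}(t)\nabla\boldsymbol{B}(\boldsymbol{y}_{\boldsymbol{\alpha}})\bigr)\det\boldsymbol{J}_{\boldsymbol{\alpha}},\qquad \det\boldsymbol{J}_{\boldsymbol{\alpha}}(0)=1.
\]
This scalar linear ODE integrates to the stated exponential, which is strictly positive.

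\emph{Part (c).} The work is bounding $\nabla\boldsymbol{B}$ and then $|\operatorname{tr}(\boldsymbol{\alpha}\nabla\boldsymbol{B})|$.
\begin{itemize}
\item From the proof of Proposition~\ref{lem:basis}(b), $\sum_{i}|b'_{m,i}(x)|\le 2m$.
\item The column $\partial_{x}\boldsymbol{B}$ has entries $b'_{m,i}(x)\,b_{n,j}(y)\,b_{o,k}(z)$. Since the $b_{n,j}(y)$ and the $b_{o,k}(z)$ are nonnegative and each family sums to one, $\|\partial_{x}\boldsymbol{B}\|_{1}\le 2m$, and hence $\|\partial_{x}\boldsymbol{B}\|_{2}\le 2m$.
\item The same bounds hold for the $y$- and $z$-columns with $n$ and $o$. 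Summing the squares of the three columns gives $\|\nabla\boldsymbol{B}\|_{F}^{2}\le 4(m^{2}+n^{2}+o^{2})\le 12\bar m^{2}$.
\item For the trace, write $\operatorname{tr}(\boldsymbol{\alpha}\nabla\boldsymbol{B})=\langle\boldsymbol{\alpha},(\nabla\boldsymbol{B})^{\top}\rangle$ as a Frobenius pairing. Cauchy--Schwarz gives $|\operatorname{tr}(\boldsymbol{\alpha}\nabla\boldsymbol{B})|\le\|\boldsymbol{\alpha}(s)\|_{\mathcal{C}}\,\|\nabla\boldsymbol{B}\|_{F}\le 2\sqrt3\,\bar m\,\|\boldsymbol{\alpha}(s)\|_{\mathcal{C}}$.
\end{itemize}
Substituting this into the exponent from (b) and integrating over $s\in[0,t]\subset[0,1]$ bounds the exponent by $2\sqrt3\,\bar m\,\|\boldsymbol{\alpha}\|_{L^{1}([0,1],\mathcal{C})}$, which gives the upper bound on $\det\boldsymbol{J}_{\boldsymbol{\alpha}}$.

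The main obstacle is part (a) at the boundary: the diffeomorphism must be onto the closed cube, including its faces. Extending the polynomial field to $\mathbb{R}^{3}$ and using invariance of the cube in both time directions is the route I would take to handle this.
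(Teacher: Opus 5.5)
Your proposal is correct and follows the paper's proof essentially step for step. For (a) you use forward invariance from Proposition~\ref{lem:inv} and the same argument for the time-reversed field, which gives the $C^1$ inverse; for (b) you use the variational equation with the Jacobi--Liouville formula; and for (c) you use the column-wise bound $\|\partial_d\boldsymbol{B}\|_1\le 2\bar m$ together with the Frobenius Cauchy--Schwarz inequality. Your sharper intermediate bound $4(m^2+n^2+o^2)$ and the explicit extension of the polynomial field to $\mathbb{R}^3$ are harmless refinements of that same argument.
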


\begin{proof}
(a) The field $(t,\boldsymbol{x})\mapsto
\boldsymbol{F}_{\boldsymbol{\alpha}(t)}(\boldsymbol{x})$ is continuous,
polynomial in $\boldsymbol{x}$, and $C^{1}$ in $\boldsymbol{x}$ with
derivative continuous in $(t,\boldsymbol{x})$; hence the flow exists,
is unique, and is $C^{1}$ in the initial condition. By
Proposition~\ref{lem:inv}, $[0,1]^{3}$ is forward invariant; running the
same argument for the time-reversed field
$-\boldsymbol{F}_{\boldsymbol{\alpha}(t-s)}$ (whose coefficients again
lie in $\mathcal{C}_{0}$) shows that
$\boldsymbol{y}_{\boldsymbol{\alpha}}(t,\cdot)$ is invertible on
$[0,1]^{3}$ with $C^{1}$ inverse given by the backward flow; in
particular it maps $[0,1]^{3}$ onto $[0,1]^{3}$.

(b) Differentiating the ODE with respect to $\boldsymbol{x}$ gives the
variational equation
$\partial_{t}\boldsymbol{J}_{\boldsymbol{\alpha}}
=\bigl(\boldsymbol{\alpha}(t)\,
\nabla\boldsymbol{B}(\boldsymbol{y}_{\boldsymbol{\alpha}})\bigr)
\boldsymbol{J}_{\boldsymbol{\alpha}}$ with
$\boldsymbol{J}_{\boldsymbol{\alpha}}(0,\boldsymbol{x})=\boldsymbol{I}$,
and the Jacobi--Liouville formula yields the stated exponential
representation, which is strictly positive.

(c) By the derivative decomposition
$b'_{m,i}=m(b_{m-1,i-1}-b_{m-1,i})$, nonnegativity and the partition
of unity on $[0,1]$, each column of $\nabla\boldsymbol{B}$ satisfies
$\|\partial_{d}\boldsymbol{B}\|_{1}\le 2\bar{m}$, $d=1,2,3$
(cf. Proposition~\ref{lem:basis}(b)). Hence
\[
\|\nabla\boldsymbol{B}\|_{F}^{2}
=\sum_{d=1}^{3}\|\partial_{d}\boldsymbol{B}\|_{2}^{2}
\le\sum_{d=1}^{3}\|\partial_{d}\boldsymbol{B}\|_{1}^{2}
\le 12\,\bar{m}^{2}.
\]
By the Cauchy--Schwarz inequality for the Frobenius inner product,
\[
\bigl|\operatorname{tr}\bigl(\boldsymbol{\alpha}(s)\,
\nabla\boldsymbol{B}(\boldsymbol{y}_{\boldsymbol{\alpha}}(s,\boldsymbol{x}))\bigr)\bigr|
\le\|\boldsymbol{\alpha}(s)\|_{\mathcal{C}}\,
\|\nabla\boldsymbol{B}(\boldsymbol{y}_{\boldsymbol{\alpha}}(s,\boldsymbol{x}))\|_{F}
\le 2\sqrt{3}\,\bar{m}\,\|\boldsymbol{\alpha}(s)\|_{\mathcal{C}},
\]
where we used that
$\boldsymbol{y}_{\boldsymbol{\alpha}}(s,\boldsymbol{x})\in[0,1]^{3}$.
Integrating in $s$ and inserting into (b) gives (c).
\end{proof}

\begin{proposition}[Uniform coercivity of the deformed Gram matrix]
\label{prop:gram}
Let $\boldsymbol{\alpha}\in C([0,1],\mathcal{C}_{0})$ and define, for
$t\in[0,1]$,
\[
\boldsymbol{M}_{\boldsymbol{\alpha}}(t)
:=\int_{[0,1]^{3}}
\boldsymbol{B}\bigl(\boldsymbol{y}_{\boldsymbol{\alpha}}(t,\boldsymbol{x})\bigr)\,
\boldsymbol{B}\bigl(\boldsymbol{y}_{\boldsymbol{\alpha}}(t,\boldsymbol{x})\bigr)^{T}
d\boldsymbol{x}
\;\in\;\mathbb{R}^{N\times N},
\qquad N=(m+1)(n+1)(o+1).
\]
Then $\boldsymbol{M}_{\boldsymbol{\alpha}}(t)$ is symmetric positive
definite and
\[
\lambda_{\min}\bigl(\boldsymbol{M}_{\boldsymbol{\alpha}}(t)\bigr)
\;\ge\;\omega_{\boldsymbol{\alpha}}
:=\frac{\exp\Bigl(-2\sqrt{3}\,\bar{m}\,
\|\boldsymbol{\alpha}\|_{L^{1}([0,1],\mathcal{C})}\Bigr)}
{(2m+1)\binom{2m}{m}\,(2n+1)\binom{2n}{n}\,(2o+1)\binom{2o}{o}}
\qquad\forall\,t\in[0,1].
\]
\end{proposition}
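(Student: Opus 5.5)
The plan is to turn the quadratic form of the deformed Gram matrix into a quadratic form of the undeformed one, using a change of variables along the flow. Then it suffices to bound the smallest eigenvalue of the standard Bernstein Gram matrix on the cube.

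Fix $t$ and a vector $\boldsymbol{w}\in\mathbb{R}^{N}$. Then
\[
\boldsymbol{w}^{T}\boldsymbol{M}_{\boldsymbol{\alpha}}(t)\boldsymbol{w}
=\int_{[0,1]^{3}}\bigl(\boldsymbol{w}^{T}\boldsymbol{B}(\boldsymbol{y}_{\boldsymbol{\alpha}}(t,\boldsymbol{x}))\bigr)^{2}d\boldsymbol{x}.
\]
This shows at once that the matrix is symmetric and positive semidefinite. By Proposition~\ref{lem:flow}(a), $\boldsymbol{y}_{\boldsymbol{\alpha}}(t,\cdot)$ is a $C^{1}$ diffeomorphism of $[0,1]^{3}$ onto itself. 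We substitute $\boldsymbol{u}=\boldsymbol{y}_{\boldsymbol{\alpha}}(t,\boldsymbol{x})$, so that $d\boldsymbol{u}=\det\boldsymbol{J}_{\boldsymbol{\alpha}}(t,\boldsymbol{x})\,d\boldsymbol{x}$, which gives
\[
\boldsymbol{w}^{T}\boldsymbol{M}_{\boldsymbol{\alpha}}(t)\boldsymbol{w}
=\int_{[0,1]^{3}}\bigl(\boldsymbol{w}^{T}\boldsymbol{B}(\boldsymbol{u})\bigr)^{2}\,\frac{d\boldsymbol{u}}{\det\boldsymbol{J}_{\boldsymbol{\alpha}}(t,\boldsymbol{x}(\boldsymbol{u}))}.
\]
Proposition~\ref{lem:flow}(c) bounds the determinant above by $\exp(2\sqrt3\,\bar m\|\boldsymbol{\alpha}\|_{L^{1}})$. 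Hence
\[
\boldsymbol{w}^{T}\boldsymbol{M}_{\boldsymbol{\alpha}}(t)\boldsymbol{w}\ge e^{-2\sqrt3\bar m\|\boldsymbol{\alpha}\|_{L^1}}\,\boldsymbol{w}^{T}\boldsymbol{G}\boldsymbol{w},
\qquad \boldsymbol{G}:=\int_{[0,1]^{3}}\boldsymbol{B}\boldsymbol{B}^{T}.
\]

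It remains to show that $\lambda_{\min}(\boldsymbol{G})\ge\bigl[(2m+1)\binom{2m}{m}(2n+1)\binom{2n}{n}(2o+1)\binom{2o}{o}\bigr]^{-1}$. Since $\boldsymbol{B}$ is a tensor product, $\boldsymbol{G}=\boldsymbol{G}_{m}\otimes\boldsymbol{G}_{n}\otimes\boldsymbol{G}_{o}$, where $\boldsymbol{G}_{m}$ is the univariate Bernstein Gram matrix with entries
\[
(\boldsymbol{G}_{m})_{ij}=\int_{0}^{1}b_{m,i}b_{m,j}=\frac{\binom{m}{i}\binom{m}{j}}{(2m+1)\binom{2m}{i+j}}.
\]
Eigenvalues of a Kronecker product are products of the factors' eigenvalues, so it suffices to prove $\lambda_{\min}(\boldsymbol{G}_{m})\ge\bigl[(2m+1)\binom{2m}{m}\bigr]^{-1}$.

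This univariate bound is the main obstacle. My first attempt factors out the binomials: write $\boldsymbol{G}_{m}=\frac{1}{2m+1}\boldsymbol{D}\boldsymbol{H}\boldsymbol{D}$, with $\boldsymbol{D}=\operatorname{diag}\binom{m}{i}$ and $H_{ij}=1/\binom{2m}{i+j}$. Each entry of $\boldsymbol{D}\boldsymbol{H}\boldsymbol{D}$ is at least comparable to $1/\binom{2m}{m}$ on the diagonal, since $\binom{m}{i}^{2}/\binom{2m}{2i}\ge1/\binom{2m}{m}$ by Vandermonde. That alone does not control off-diagonal cancellation, however. The cleaner route I would try is the known dual-basis/Hankel-moment structure. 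The matrix $\boldsymbol{D}^{-1}$ applied to $\boldsymbol{w}$ turns $\sum w_i b_{m,i}$ into $\sum c_i x^i(1-x)^{m-i}$. Then $\int_0^1 p^2\ge$ (smallest eigenvalue of the Gram matrix of the basis $x^{i}(1-x)^{m-i}$) times $\|\boldsymbol{c}\|^2$, and this can be compared through the $L^{2}$ norm on $[0,1]$ and the weighted inequality $\int_0^1 p^2\ge \frac{1}{(2m+1)\binom{2m}{m}}\max_i |w_i|^2$-type estimates.

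A fallback is to use the explicit inverse $\boldsymbol{G}_{m}^{-1}$, which is given through the Bernstein dual basis in Legendre form. We then bound $\lambda_{\max}(\boldsymbol{G}_m^{-1})$ by its row-sum norm or trace, aiming at $\le(2m+1)\binom{2m}{m}$; the entries of $\boldsymbol{G}_m^{-1}$ are explicit sums of binomial products whose row sums should telescope to that value.

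Finally, positivity of $\lambda_{\min}(\boldsymbol{G})$ together with the comparison above gives positive definiteness of $\boldsymbol{M}_{\boldsymbol{\alpha}}(t)$ and the stated uniform bound $\omega_{\boldsymbol{\alpha}}$ for every $t\in[0,1]$.
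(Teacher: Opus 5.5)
Your reduction is the same as the paper's: the substitution $\boldsymbol{u}=\boldsymbol{y}_{\boldsymbol{\alpha}}(t,\boldsymbol{x})$, the Jacobian bound of Proposition~\ref{lem:flow}(c), and $\boldsymbol{G}=\boldsymbol{G}_{m}\otimes\boldsymbol{G}_{n}\otimes\boldsymbol{G}_{o}$ with multiplicativity of the spectrum. The gap is the step you flag yourself. You never prove $\lambda_{\min}(\boldsymbol{G}_{m})\ge[(2m+1)\binom{2m}{m}]^{-1}$. This constant is the \emph{exact} value of $\lambda_{\min}(\boldsymbol{G}_{m})$ (the paper takes it from \cite{lyche_p-norm_2000}), so the lossy devices you propose cannot reach it:
\begin{itemize}
\item The trace overshoots. $\operatorname{tr}\boldsymbol{G}_{m}^{-1}=\sum_{k=0}^{m}\frac{(m+k+1)!\,(m-k)!}{(m!)^{2}}$ is strictly larger than its last term $\frac{(2m+1)!}{(m!)^{2}}=(2m+1)\binom{2m}{m}$.
\item The maximal absolute row sum also overshoots. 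For $m=2$ the rows of $\boldsymbol{G}_{2}^{-1}$ are $(9,-9,3)$, $(-9,21,-9)$, $(3,-9,9)$, which gives $39>30$.
\item Signed row sums only return $m+1$, the eigenvalue on constants, so nothing telescopes to the target.
\item A bound of type $\int_{0}^{1}p^{2}\ge c\max_{i}|w_{i}|^{2}$ controls the wrong norm. Converting it to $\|\boldsymbol{w}\|_{2}^{2}$ loses up to a factor $m+1$.
\item Diagonal lower bounds say nothing about $\lambda_{\min}$, as you note.
\end{itemize}

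What is missing is the exact spectrum of the Bernstein mass matrix. Its eigenvalues are $\frac{(m!)^{2}}{(m+k+1)!\,(m-k)!}$, $k=0,\dots,m$, and its eigenvectors are the Bernstein coefficients of the shifted Legendre polynomials $P_{k}$. You can cite this as the paper does, or prove it via the Durrmeyer operator $M_{m}f=(m+1)\sum_{i}b_{m,i}\int_{0}^{1}b_{m,i}f$:
\begin{itemize}
\item On polynomials of degree $\le m$, $M_m$ is represented in the Bernstein basis by $(m+1)\boldsymbol{G}_{m}$.
\item $M_m$ is $L^{2}$-self-adjoint, since its kernel is symmetric.
\item $M_m$ maps polynomials of degree $\le k$ to polynomials of degree $\le k$. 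This holds because $(m+1)\int_{0}^{1}b_{m,i}(s)\,s^{k}\,ds=\frac{(m+1)!}{(m+k+1)!}(i+1)\cdots(i+k)$ is a degree-$k$ polynomial in $i$, and $\sum_{i}b_{m,i}(x)\,i(i-1)\cdots(i-j+1)=\frac{m!}{(m-j)!}x^{j}$.
\end{itemize}
Hence $M_{m}P_{k}$ has degree $\le k$ and, by self-adjointness, is orthogonal to all lower-degree polynomials, so it is a multiple of $P_{k}$. Comparing leading coefficients gives the eigenvalue $\frac{(m+1)!\,m!}{(m+k+1)!\,(m-k)!}$, which decreases in $k$. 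Dividing by $m+1$ and taking $k=m$ yields $\lambda_{\min}(\boldsymbol{G}_{m})=\frac{(m!)^{2}}{(2m+1)!}=[(2m+1)\binom{2m}{m}]^{-1}$. With this in place, the rest of your argument goes through unchanged.
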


\begin{proof}
Fix $t$ and $\boldsymbol{v}\in\mathbb{R}^{N}$. By
Proposition~\ref{lem:flow}(a) the substitution
$\boldsymbol{u}=\boldsymbol{y}_{\boldsymbol{\alpha}}(t,\boldsymbol{x})$
is a $C^{1}$ change of variables of $[0,1]^{3}$ onto itself, with
volume element
$d\boldsymbol{u}
=\det\boldsymbol{J}_{\boldsymbol{\alpha}}(t,\boldsymbol{x})\,
d\boldsymbol{x}$. Hence
\[
\boldsymbol{v}^{T}\boldsymbol{M}_{\boldsymbol{\alpha}}(t)\,\boldsymbol{v}
=\int_{[0,1]^{3}}
\bigl(\boldsymbol{v}\cdot
\boldsymbol{B}(\boldsymbol{y}_{\boldsymbol{\alpha}}(t,\boldsymbol{x}))\bigr)^{2}
d\boldsymbol{x}
=\int_{[0,1]^{3}}
\bigl(\boldsymbol{v}\cdot\boldsymbol{B}(\boldsymbol{u})\bigr)^{2}\,
\det\boldsymbol{J}_{\boldsymbol{\alpha}}
\bigl(t,\boldsymbol{y}_{\boldsymbol{\alpha}}^{-1}(t,\boldsymbol{u})\bigr)^{-1}
d\boldsymbol{u},
\]
and therefore, by Proposition~\ref{lem:flow}(c),
\[
\boldsymbol{v}^{T}\boldsymbol{M}_{\boldsymbol{\alpha}}(t)\,\boldsymbol{v}
\;\ge\;
\exp\Bigl(-2\sqrt{3}\,\bar{m}\,
\|\boldsymbol{\alpha}\|_{L^{1}([0,1],\mathcal{C})}\Bigr)\,
\boldsymbol{v}^{T}\boldsymbol{G}\,\boldsymbol{v},
\qquad
\boldsymbol{G}:=\int_{[0,1]^{3}}
\boldsymbol{B}(\boldsymbol{u})\boldsymbol{B}(\boldsymbol{u})^{T}
d\boldsymbol{u}.
\]
Since the trivariate basis is the tensor product of the univariate
ones and the composite index is ordered in row-major fashion,
$\boldsymbol{G}=\boldsymbol{G}_{m}\otimes\boldsymbol{G}_{n}\otimes
\boldsymbol{G}_{o}$, where
$(\boldsymbol{G}_{m})_{ij}=\int_{0}^{1}b_{m,i}b_{m,j}\,dx$ is the
univariate Bernstein mass matrix. The $\boldsymbol{G}_{m}$ are
symmetric positive definite, and the eigenvalues of a Kronecker
product of symmetric matrices are the products of the eigenvalues of
the factors, so
$\lambda_{\min}(\boldsymbol{G})
=\lambda_{\min}(\boldsymbol{G}_{m})\,
 \lambda_{\min}(\boldsymbol{G}_{n})\,
 \lambda_{\min}(\boldsymbol{G}_{o})$.
By \cite{lyche_p-norm_2000},
$\lambda_{\min}(\boldsymbol{G}_{m})
=\bigl[(2m+1)\binom{2m}{m}\bigr]^{-1}$,
and the claim follows by combining the last three displays.
\end{proof}

We can now do the proof of the Theorem

\begin{proof}
Membership in $\mathbb{W}$ is immediate: both flows and their time
derivatives are continuous on the compact set $[0,1]\times[0,1]^{3}$.
Set $\boldsymbol{z}:=\boldsymbol{y}_{\boldsymbol{\alpha}}
-\boldsymbol{y}_{\boldsymbol{\gamma}}$. Subtracting the two evolution
equations and adding and subtracting
$\boldsymbol{\alpha}(t)\boldsymbol{B}(\boldsymbol{y}_{\boldsymbol{\gamma}})$,
\[
\dot{\boldsymbol{z}}(t,\boldsymbol{x})
=\bigl(\boldsymbol{\alpha}(t)-\boldsymbol{\gamma}(t)\bigr)\,
\boldsymbol{B}\bigl(\boldsymbol{y}_{\boldsymbol{\gamma}}(t,\boldsymbol{x})\bigr)
+\boldsymbol{\alpha}(t)\,
\Bigl(\boldsymbol{B}\bigl(\boldsymbol{y}_{\boldsymbol{\alpha}}(t,\boldsymbol{x})\bigr)
-\boldsymbol{B}\bigl(\boldsymbol{y}_{\boldsymbol{\gamma}}(t,\boldsymbol{x})\bigr)\Bigr).
\]
Multiplying on the right by
$\boldsymbol{B}(\boldsymbol{y}_{\boldsymbol{\gamma}}(t,\boldsymbol{x}))^{T}$
and integrating over $\boldsymbol{x}\in[0,1]^{3}$,
\[
\int_{[0,1]^{3}}\dot{\boldsymbol{z}}\,
\boldsymbol{B}(\boldsymbol{y}_{\boldsymbol{\gamma}})^{T}\,d\boldsymbol{x}
=\bigl(\boldsymbol{\alpha}(t)-\boldsymbol{\gamma}(t)\bigr)\,
\boldsymbol{M}_{\boldsymbol{\gamma}}(t)
+\boldsymbol{\alpha}(t)\,\boldsymbol{H}(t)\]
\[\boldsymbol{H}(t):=\int_{[0,1]^{3}}
\bigl(\boldsymbol{B}(\boldsymbol{y}_{\boldsymbol{\alpha}})
-\boldsymbol{B}(\boldsymbol{y}_{\boldsymbol{\gamma}})\bigr)\,
\boldsymbol{B}(\boldsymbol{y}_{\boldsymbol{\gamma}})^{T}\,d\boldsymbol{x}.
\]
By Proposition~\ref{prop:gram}, $\boldsymbol{M}_{\boldsymbol{\gamma}}(t)$
is invertible with
$\|\boldsymbol{M}_{\boldsymbol{\gamma}}(t)^{-1}\|_{2}
\le\omega_{\boldsymbol{\gamma}}^{-1}$, hence
\[
\boldsymbol{\alpha}(t)-\boldsymbol{\gamma}(t)
=\Bigl(\int_{[0,1]^{3}}\dot{\boldsymbol{z}}\,
\boldsymbol{B}(\boldsymbol{y}_{\boldsymbol{\gamma}})^{T}\,d\boldsymbol{x}
-\boldsymbol{\alpha}(t)\,\boldsymbol{H}(t)\Bigr)\,
\boldsymbol{M}_{\boldsymbol{\gamma}}(t)^{-1}.
\]
We now bound the two factors. First, since
$\boldsymbol{y}_{\boldsymbol{\gamma}}(t,\boldsymbol{x})\in[0,1]^{3}$,
Proposition~\ref{lem:basis}(a) gives
$\|\boldsymbol{B}(\boldsymbol{y}_{\boldsymbol{\gamma}})\|_{2}
\le\|\boldsymbol{B}(\boldsymbol{y}_{\boldsymbol{\gamma}})\|_{1}=1$, so
by the integral Minkowski and Cauchy--Schwarz inequalities on the unit
cube,
\[
\Bigl\|\int_{[0,1]^{3}}\dot{\boldsymbol{z}}\,
\boldsymbol{B}(\boldsymbol{y}_{\boldsymbol{\gamma}})^{T}\,d\boldsymbol{x}
\Bigr\|_{F}
\le\int_{[0,1]^{3}}
\|\dot{\boldsymbol{z}}\|_{2}\,
\|\boldsymbol{B}(\boldsymbol{y}_{\boldsymbol{\gamma}})\|_{2}\,
d\boldsymbol{x}
\le\|\dot{\boldsymbol{z}}(t,\cdot)\|_{L^{2}_{\boldsymbol{x}}} .
\]
Second, by the Lipschitz bound of Proposition~\ref{lem:basis}(b) together
with $\|\cdot\|_{2}\le\|\cdot\|_{1}$ in $\mathbb{R}^{N}$ and
$\|\cdot\|_{1}\le\sqrt3\,\|\cdot\|_{2}$ in $\mathbb{R}^{3}$, and again
Cauchy--Schwarz in $\boldsymbol{x}$,
\[
\|\boldsymbol{H}(t)\|_{F}
\le\int_{[0,1]^{3}}
\bigl\|\boldsymbol{B}(\boldsymbol{y}_{\boldsymbol{\alpha}})
-\boldsymbol{B}(\boldsymbol{y}_{\boldsymbol{\gamma}})\bigr\|_{2}\,
d\boldsymbol{x}
\le 2\sqrt{3}\,\bar{m}\int_{[0,1]^{3}}
\|\boldsymbol{z}\|_{2}\,d\boldsymbol{x}
\le 2\sqrt{3}\,\bar{m}\,
\|\boldsymbol{z}(t,\cdot)\|_{L^{2}_{\boldsymbol{x}}} .
\]
Using $\|\boldsymbol{A}\boldsymbol{M}^{-1}\|_{F}
\le\|\boldsymbol{A}\|_{F}\,\|\boldsymbol{M}^{-1}\|_{2}$ and
$\|\boldsymbol{\alpha}(t)\boldsymbol{H}(t)\|_{F}
\le\|\boldsymbol{\alpha}(t)\|_{\mathcal{C}}\,
\|\boldsymbol{H}(t)\|_{F}$, we obtain, for every $t\in[0,1]$,
\[
\|\boldsymbol{\alpha}(t)-\boldsymbol{\gamma}(t)\|_{\mathcal{C}}
\le\frac{1}{\omega_{\boldsymbol{\gamma}}}
\Bigl(
\|\dot{\boldsymbol{z}}(t,\cdot)\|_{L^{2}_{\boldsymbol{x}}}
+2\sqrt{3}\,\bar{m}\,
\|\boldsymbol{\alpha}\|_{L^{\infty}([0,1],\mathcal{C})}\,
\|\boldsymbol{z}(t,\cdot)\|_{L^{2}_{\boldsymbol{x}}}
\Bigr).
\]
Writing $C:=2\sqrt{3}\,\bar{m}\,
\|\boldsymbol{\alpha}\|_{L^{\infty}([0,1],\mathcal{C})}$ and applying
the Cauchy--Schwarz inequality in $\mathbb{R}^{2}$ to the pair
$(1,C)$,
\[
\|\boldsymbol{\alpha}(t)-\boldsymbol{\gamma}(t)\|_{\mathcal{C}}^{2}
\le\frac{1+C^{2}}{\omega_{\boldsymbol{\gamma}}^{2}}
\Bigl(
\|\dot{\boldsymbol{z}}(t,\cdot)\|_{L^{2}_{\boldsymbol{x}}}^{2}
+\|\boldsymbol{z}(t,\cdot)\|_{L^{2}_{\boldsymbol{x}}}^{2}
\Bigr).
\]
Integrating over $t\in[0,1]$ yields
\[
\|\boldsymbol{\alpha}-\boldsymbol{\gamma}\|_{\mathbb{V}}^{2}
\le\frac{1+C^{2}}{\omega_{\boldsymbol{\gamma}}^{2}}\,
\|\boldsymbol{z}\|_{\mathbb{W}}^{2},
\]
which, upon taking square roots and rearranging, is the asserted
inequality with $1+C^{2}=1+12\,\bar{m}^{2}\,
\|\boldsymbol{\alpha}\|_{L^{\infty}([0,1],\mathcal{C})}^{2}$.
\end{proof}

\section{TARFLOW detailed construction and training}
To rigorously characterise the transformer architecture, we introduce the set $\mathbb{A}^{D}$ defined by
\[
\mathbb{A}^{D}=\left\{\left(n,\boldsymbol{y}\right)\in \left(\{1,\ldots, D\},\bigcup_{i=1}^{D}\mathbb{R}^{i}\right)\,\middle|\, \boldsymbol{y}\in \mathbb{R}^{n} \right\}.
\]
A transformer is then a mapping $f:\mathbb{A}^{D}\rightarrow \mathbb{A}^{D}$ that preserves the first component of elements in $\mathbb{A}^{D}$, thereby keeping the length of the input vector invariant.

An autoregressive flow $\boldsymbol{T}_{j}:\mathbb{R}^{D}\rightarrow\mathbb{R}^{D}$ is defined as
\[
\begin{aligned}
& \tilde{\boldsymbol{z}} \leftarrow \pi\left(\boldsymbol{z}\right), \\
& \boldsymbol{T}_{j}(\boldsymbol{z}) \cdot \boldsymbol{e}^{(i)}\leftarrow \begin{cases}
\tilde{\boldsymbol{z}} \cdot \boldsymbol{e}^{(i)} & i=1, \\[4pt]
\left(\tilde{\boldsymbol{z}} \cdot \boldsymbol{e}^{(i)}-\boldsymbol{H}_{j}\left(\tilde{\boldsymbol{z}} \boldsymbol{I}_{<i}\right)\cdot \boldsymbol{v}^{(i)}\right) 
\exp \left(-\boldsymbol{L}_{j}\left(\tilde{\boldsymbol{z}} \boldsymbol{I}_{<i}\right)\cdot \boldsymbol{v}^{(i)}\right) & i>1,
\end{cases}
\end{aligned}
\]
where $\boldsymbol{L}_{j}:\mathbb{A}^{D}\rightarrow\mathbb{A}^{D}$ and $\boldsymbol{H}_{j}:\mathbb{A}^{D}\rightarrow\mathbb{A}^{D}$ denote transformer models (implemented with causal masking), and $\pi$ denotes the reversal (flip) operator on vectors. The vector $\boldsymbol{v}^{(i)}$ is the $i$-th canonical basis vector of $\mathbb{R}^{i}$, and $\boldsymbol{I}_{<i}$ is an $10\times i$ matrix whose columns are the first $i-1$ canonical basis vectors of $\mathbb{R}^{D}$. Consequently, the product $\boldsymbol{z}\boldsymbol{I}_{<i}$ extracts the first $i-1$ components of $\boldsymbol{z}$.

The inverse transformation is given by
\[
\begin{aligned}
& \tilde{\boldsymbol{z}} \leftarrow \pi^{-1}\left(\boldsymbol{z}\right), \\
& \boldsymbol{T}^{-1}_{j}(\boldsymbol{z})\cdot \boldsymbol{e}^{(i)} \leftarrow \begin{cases}
\tilde{\boldsymbol{z}} \cdot \boldsymbol{e}^{(i)} & i=1, \\[4pt]
\left(\tilde{\boldsymbol{z}} \cdot \boldsymbol{e}^{(i)}\right) 
\exp \left(\boldsymbol{L}_{j}\left(\tilde{\boldsymbol{z}} \boldsymbol{I}_{<i}\right)\cdot \boldsymbol{v}^{(i)}\right)
+ \boldsymbol{H}_{j}\left(\tilde{\boldsymbol{z}} \boldsymbol{I}_{<i}\right)\cdot \boldsymbol{v}^{(i)} & i>1.
\end{cases}
\end{aligned}
\]

 Let $\boldsymbol{T}_{\boldsymbol{\gamma}}$ denote the resulting composition, parametrised by $\boldsymbol{\gamma}$, and let $\tilde{\boldsymbol{T}}_{j}$ denote the composition of the flows $\boldsymbol{T}_{k}$ for $k<j$, with $\tilde{\boldsymbol{T}}_{1}=\operatorname{id}$. For brevity, we omit the explicit dependence on $\boldsymbol{\gamma}$ in the intermediate flows.

Let $\boldsymbol{z}^{(k)}$ denote the encodings of the POD coefficients associated with the instantaneous velocity field $\boldsymbol{\alpha}^{(k)}$. We aim to maximise the log-likelihood of the pushforward of the Gaussian distribution under $\boldsymbol{T}_{\boldsymbol{\gamma}}$. By a change-of-variables formulation, the corresponding optimization problem is \cite{gu_starflow_2025}
\[
\min _{\boldsymbol{\gamma}} \sum_{k=1}^{N}\left( \frac{1}{2}\left\|\boldsymbol{T}_{\boldsymbol{\gamma}}(\boldsymbol{z}^{(k)})\right\|_2^2
+ \sum_{i=1}^{D} \sum_{j=1}^{3} \boldsymbol{L}_{j}\big(\tilde{\boldsymbol{T}}_{j}(\boldsymbol{z}^{(k)})\boldsymbol{I}_{<i}\big)\cdot \boldsymbol{v}^{(i)}\right).
\]

\end{document}